\pgfplotsset{compat=1.13}
\newtheorem{theorem}{Theorem}[section]
\newtheorem{definition}[theorem]{Definition}
\newtheorem{prop}[theorem]{Proposition}
\newtheorem{lemma}[theorem]{Lemma}
\newtheorem{cor}[theorem]{Corollary}
\theoremstyle{definition}
\newtheorem{oss}[theorem]{Remark}
\theoremstyle{definition}
\newtheorem{ex}[theorem]{Example}
\newcommand{\bs}{\boldsymbol}
\newcommand{\al}{\boldsymbol{\alpha}}
\newcommand{\be}{\boldsymbol{\beta}}
\newcommand{\de}{\boldsymbol{\delta}}
\newcommand{\e}{\boldsymbol{e}}
\newcommand{\om}{\boldsymbol{\omega}}
\newcommand{\g}{\boldsymbol{\gamma}}
\newcommand{\te}{\boldsymbol{\theta}}
\newcommand{\he}{\boldsymbol{\eta}}
\newcommand{\ga}{\boldsymbol{\gamma}}
\newcommand{\Ap}{\mathrm{\bf Ap}}
\newcommand{\wt}{\hspace{0.1cm}\widetilde{\wedge}\hspace{0.1cm}}
\newcommand{\wh}[1]{\widehat{#1}}
\newcommand{\bei}[1]{\bs{\beta^{(#1)}}}
\newcommand{\td}[2]{\widetilde\Delta_{#1}^S(#2)}
\newcommand{\ds}[2]{\Delta_{#1}^S(#2)}
\newcommand{\dE}[2]{\Delta_{#1}^E(#2)}
\newcommand{\tdE}[2]{\widetilde\Delta_{#1}^E(#2)}
\newcommand{\N}{\mathbb{N}}
\newcommand{\K}{\mathbb{K}}
\newcommand{\pf}{\mathrm{\bf PF}}
\vfill\begin{center}%
\newcommand{\keywords}[1]{\emph{Keywords:} #1}
\newcommand{\MSC}[1]{\emph{Mathematics Subject Classification 2010:} #1}
\newcounter{lastnote}
\begin{document}

\title{Properties and applications of the Ap\'ery set of good semigroups in $\mathbb{N}^d$} 

\author{L. Guerrieri \thanks{Jagiellonian University, Instytut Matematyki, 30-348 Krak\'{o}w \emph{e-mail: lorenzo.guerrieri@uj.edu.pl}},\quad N. Maugeri \thanks{Università degli studi di Catania, Dipartimento di Matematica e Informatica, Catania \emph{e-mail: nicola.maugeri@unict.it}},\quad V. Micale \thanks{Università degli studi di Catania, Dipartimento di Matematica e Informatica, Catania \emph{e-mail: vmicale@dmi.unict.it}
}
}

\date{}

\linespread{1,0}
\maketitle
\begin{abstract}
\noindent In this article we discuss some applications of the construction of the Ap\'ery set of a good semigroup in $\mathbb{N}^d$ given in \cite{GMM}. In particular we study: the duality of a symmetric and almost symmetric good semigroup, the Ap\'ery set of non-local good semigroups and the Ap\'ery set of value semigroups of plane curves.
\end{abstract}
\keywords{Good semigroups, Ap\'{e}ry set, symmetric and almost symmetric semigroups, plane curves}\\
\MSC{20M10, 20M14, 20M25.}


\maketitle

\section{Introduction}
In this article we discuss several applications of the construction of the Ap\'ery set of a good semigroup given in \cite{GMM}. Good semigroups form a family of submonoids of $\mathbb{N}^d$ defined axiomatically in \cite{a-u}, in order to study Noetherian analytically unramified one-dimensional semilocal reduced rings, e.g. the local
rings arising from curve singularities and their blowups. 
Indeed, the family of good semigroups contains all value semigroups of such algebroid curves.  The concept of value semigroup has been already known long time before the definition of good semigroup was given, and, often in the literature, properties of algebroid curves and of the corresponding rings have been translated and studied at semigroup level \cite{kunz}, \cite{H-K} \cite{felix}, \cite{delgado}, \cite{C-D-K}, \cite{danna1}, \cite{c-d-gz}, \cite{two}, \cite{KST}, \cite{D-G-M-T}.
For instance, it is well-known that a one-dimensional analytically unramified local ring
 is Gorenstein if and only if its value semigroup is symmetric. 
The integer $d$ represents the number of branches of the curve. For $d=1$, good semigroups coincide with numerical semigroups and have been extensively studied in connection with many subjects including algebraic geometry, commutative algebra, factorization theory, coding theory \cite{libropedro}, \cite{libroassidannapedro}.
More recently, good semigroups in $\mathbb{N}^d$ with $d \geq 2$ have been studied, still in connection with the geometric and algebraic theory of curve singularities, but also with the purpose of extending pure combinatoric properties of numerical semigroups to this more general setting.

The concept of Ap\'ery set, classical notion in the theory of numerical semigroups, has been extended to the "good" case, first in \cite{apery} for value semigroups of plane curves with two branches, then for arbitrary good semigroups in $\mathbb{N}^2$ in \cite{DGM}, and for any good semigroup and any $d$ in \cite{GMM}.

This notion has been a fundamental tool to generalize various features of the numerical setting, obtaining new characterization of classes of good semigroups, such as symmetric and almost symmetric, and studying important invariants, such as type, embedding dimension, genus \cite{DGM}, \cite{DGM2}, \cite{NG}, \cite{NG2}.

Unfortunately, for non-numerical good semigroups, the Ap\'ery set is an infinite set, but it can be partitioned canonically in a finite number of subsets, called levels.  Properties of such levels reflect the behavior of the semigroup and have particularly nice applications.

In this paper we consider some of these various applications, with the idea of both extending results from the case $d \leq 2$ to arbitrary $d$ and also to cover complementary results which have not considered in the previous papers.  
Usually, instead of considering only the Ap\'ery sets, when possible we prove results for complements of good ideals (see definitions in Section 2), since this approach is more general and is needed in some of the cases we consider.

After a preliminary section (Section 2), in which we recall all the main definitions and results we are going to use, we consider in Section 3 the duality property of Ap\'ery sets of symmetric and almost symmetric good semigroups.
Such semigroups are interesting since when they are value semigroups of algebroid curves, they correspond to those algebroid curves having respectively Gorenstein and Almost Gorenstein ring (for references on the Almost Gorenstein case see \cite{B-Fr}, \cite{a-u}). We show how duality properties on the levels of the Ap\'ery set characterize these classes extending to any $d \geq 2$
the results obtained in \cite{DGM} and \cite{DGM2} for $d=2$ (and more classical results for $d=1$). 

In Section 4, we consider the complement of good ideals (and Ap\'ery sets) of non-local good semigroups (when they are value semigroups they correspond to non-local rings of algebroid curves). In this case a more precise description of the partition in levels can be obtain by splitting the semigroup as direct product of smaller good semigroups. This description will be also useful in Section 6.

In Section 5, we study a class of complements of good ideals, that we call \it well-behaved \rm and that includes the Ap\'ery sets of plane curves. Also in this case, we can give a better description of the structure and prove some more interesting result related to the content of the last section.

Finally, in Section 6 we prove a result about the Ap\'ery set of a plane curve and its blowup, which generalizes and reinterprets a result in \cite{apery}. To motivate this to the reader, we present a more detailed overview on the content of this result and on its historical background.

Let $\mathcal{O}=\K[[X,Y]]/(F_1\cdots F_d)$, with $F_i$ irreducible polynomials, be the ring of a plane algebroid curve.
Let $S=\operatorname{v}(\mathcal{O}) $ be its value semigroup. 
 The minimal nonzero element $\e$ of $S$ is called \emph{multiplicity} and it is an invariant related to the multiplicity of the ring $\mathcal{O}$. 
A fundamental invariant involved in the study of the equivalence classes of algebroid curves is the sequence of multiplicities of the successive blowups of the ring $\mathcal{O}$. 
 Two algebroid plane curves are formally equivalent if they have the same value semigroup \cite{waldi}. It is well-known that two plane algebroid branches (i.e. plane curves in the case $d=1$) have the same value semigroup if and only if
they have the same multiplicity sequence \cite{zariski}. Hence the problem of classification of plane curves can be considered equivalently in a semigroup setting.

In \cite{aperyold}, Ap\'{e}ry showed that the numerical semigroups of a plane branch and of its blowup can be determined by studying their respective Ap\'{e}ry sets (see also \cite{planealgebroid}). 
Denote by $\mathcal{O}'$ the blowup of $\mathcal{O}$. Ap\'{e}ry precisely proved that if 
$A=\{\omega_1,\ldots,\omega_{e}\}$ is the Ap\'{e}ry set of $S=\operatorname{v}(\mathcal{O})$, then $A'=\{\omega_1,\omega_2-e,\omega_3-2e,,\ldots,\omega_{e}-(e-1)e\}$ is the Ap\'{e}ry set of $S':= \operatorname{v}(\mathcal{O}')$ with respect to the same element $e$.
This theorem does not hold in the case of arbitrary numerical semigroups not associated to plane branches.

For $d>1$, in \cite{a-u}, it is shown how to associate a multiplicity tree to a good semigroup. The multiplicity tree is a tree where the vertices are the multiplicities of the value semigroups of iterated blowups of $\mathcal{O}$, and edges represent consecutive blowups.
After a finite number of blowups of $ \mathcal{O} $ one gets a semilocal non-local ring that is expressed as direct product of local rings. Until all the blowup rings (and equivalently their value semigroups) are local, the multiplicity tree is a path containing their multiplicity vector. After reaching a non-local ring, the tree branches out and each branch contains all the multiplicity vectors of all the semigroups that are components of the direct product and of their blowups.

In the same paper \cite{a-u}, using the concept of Arf closure and Arf semigroup, the authors observed that any two algebroid curves are formally equivalent if they have the same multiplicity tree.

In \cite{apery}, 
generalizing Ap\'{e}ry's Theorem, the authors proved that, if $d=2$, $\mathcal{O}$ and its blowup $\mathcal{O}'$ are both local rings, and $A_i$ and $A'_i$ denote respectively the $i$-th level of the Ap\'{e}ry set of $\operatorname{v}(\mathcal{O})$ and of $\operatorname{v}(\mathcal{O'})$, then $A_i=A'_i+(i-1)\bs{e}$.

As a consequence, they showed how to determine the multiplicity tree of a good semigroup of a plane algebroid curve with two branches using the levels of the Ap\'{e}ry sets of iterated blowups.
Moreover, they showed also how to determine the value semigroup starting from a multiplicity tree. 
In order to do this, 
they use a result by Garc\'ia \cite{garcia} which allows to determine the local value semigroup of a plane curve in $\N^2$ knowing its numerical projections (see \cite[Proposition 4.2]{apery}). 


Our purpose here is to give a proof of Ap\'ery's Theorem for value semigroups of plane curves with two branches also in the case $S$ is local and $S'$ is not. The key ingredient of this proof is the description of Ap\'ery sets of non-local good semigroups that we provide in Section 4. Observe indeed, that in \cite{apery} the Ap\'ery set was defined only in the local case. Our method allows to get the same result on the multiplicity tree without using Garc\'ia's result. Hopefully, the method we use here can be extended also to the case of an arbitrary number of branches $d$, for which a property analogous to that proved by Garc\'ia is unknown.



\section{Preliminaries}
In this section we fix some notations, recall some known results and demonstrate some preliminary results that will be used in the following sections.\\
We use the symbol $\le$ to denote the partial ordering in $\mathbb N^d$: setting $\al=(\alpha_1, \alpha_2,\ldots,\alpha_d), \be=(\beta_1, \beta_2,\ldots,\beta_d)$, then $\boldsymbol{\alpha}\le \boldsymbol{\beta}$ if $\alpha_i\le \beta_i$ for all $i\in \{1,\ldots,d\}$.\\
Trough this paper, if not differently specified, when referring to minimal or maximal elements of a subset of $\mathbb N^d$, we refer to minimal or maximal elements with respect to $\le$. 

The element $\de$ such that $\delta_i= \min(\alpha_i, \beta_i)$ for every $i=1, \ldots, d$ is called the the infimum of the set $\{\boldsymbol{\alpha},\boldsymbol{\beta}\}$ and will be denoted by $\bs{\alpha}\wedge \bs{\beta}$.

Let $S$ be a submonoid of $(\mathbb N^d,+)$. We say that $S$ is a \emph{good semigroup} if

\begin{itemize}
	\item[(G1)] For every $\boldsymbol{\alpha},\boldsymbol{\beta}\in S$, $\boldsymbol{\alpha}\wedge \boldsymbol{\beta}\in S$;
	\item[(G2)] Given two elements $\boldsymbol{\alpha},\boldsymbol{\beta}\in S$ such that $\al\neq \be$ and $\alpha_i=\beta_i$ for some $i\in\{1,\ldots,d\}$, then there exists $\bs{\epsilon}\in S$ such that $\epsilon_i>\alpha_i=\beta_i$ and $\epsilon_j\geq \min\{\alpha_j,\beta_j\}$ for each $j\neq i$ (and if $\alpha_j\neq \beta_j$ the equality holds).
	\item[(G3)] There exists an element $\boldsymbol{c}\in S$ such that $\boldsymbol{c}+\mathbb N^d\subseteq S$.
\end{itemize}

A good semigroup is said to be \emph{local} if $\boldsymbol{0}=(0,\ldots,0)$ is its
only element with a zero component. 

By property (G1) it is always possible to define the element $\boldsymbol{c}:=\min\{\boldsymbol{\alpha}\in \mathbb Z^d\mid \boldsymbol{\alpha}+\mathbb N^d\subseteq S\}$; this element is called \emph{conductor} of $S$.
We set $\boldsymbol{\gamma}:=\boldsymbol{c}-\textbf{1}$.

A subset $E \subseteq \N^d$ is a \it relative ideal \rm of $S$ if $E+S \subseteq E$ and there exists $\al \in S$ such that $\al + E \subseteq S$. A relative ideal $E$ contained in $S$ is simply called an ideal. An ideal $E$
satisfying properties (G1), (G2) and (G3) is called a \it good ideal \rm. The minimal element $\boldsymbol{c}_E$ such that $\boldsymbol{c}_E + \N^d \subseteq E$ is called the \it conductor \rm of $E$. As for $S$, we set $\boldsymbol{\gamma}_E:=\boldsymbol{c}_E-\textbf{1}$.

We denote by $\bs{e}=(e_1,e_2,\ldots,e_d)$, the minimal element of $S$ such that $e_i>0$ for all $i\in I$.
The set $\e + S$ is a good ideal of $S$ and its conductor is $\boldsymbol{c} + \e$. Similarly for every $\om \in S$, the principal good ideal $E= \om + S$ has conductor $\boldsymbol{c}_E = \boldsymbol{c} + \om$.

\medskip

We will use trough all paper the following notation holding for any arbitrary subset $S \subseteq \N^d$.
We denote by $I$ the set of indexes $\{1,\ldots, d\}$. 
Given $F\subseteq I$, $\al\in \N^d$, we set:
\begin{eqnarray*}
    \Delta^S_F(\al)&=&\{\be\in S \hspace{0.1cm}|\hspace{0.1cm} \beta_i=\alpha_i \text{ for } i\in F \text{ and } \beta_j>\alpha_j \text{ for } j\notin F\}.\\
    \widetilde{\Delta}^S_F(\al)&=&\{\be\in S \hspace{0.1cm}|\hspace{0.1cm} \beta_i=\alpha_i \text{ for } i\in F \text{ and } \beta_j\geq\alpha_j \text{ for } j\notin F\}\setminus\{\al\}.\\
    \Delta^S_i(\al)&=&\{\be\in S\hspace{0.1cm}|\hspace{0.1cm} \beta_i=\alpha_i \mbox{ and } \beta_j>\alpha_j \mbox{ for } j\neq i\}.\\
    \Delta^S(\al)&=&\bigcup_{i=1}^d\Delta_i^S(\al).
\end{eqnarray*}
In particular, for $S=\N^d$, we set $\Delta_F(\al):=\Delta_F^{\N^d}(\al)$.
   In general, given $F\subseteq I$, we denote by $\wh{F}$ the set $I \setminus F$. 

We recall here some general properties concerning good semigroups 
and complementary sets of a good ideals proved in Section 1 and Section 2 of the paper \cite{GMM}. We refer to that paper for all the necessary proofs.
These properties will be widely used throughout the article; for this reason, we suggest to read these sections of \cite{GMM}, to find there further details and see graphical representations related to these properties. 

\medskip

 For any subset $A \subseteq S$, we say that two elements $\al, \be \in A$ are \it consecutive \rm in $A$ if whenever $\al \leq \de \leq \be$ for some $\de \in A$, then $\de = \al$ or $\de = \be$.

\begin{lemma} \label{minimidelta} Let $E \subseteq S$ be a proper good ideal. Then: 
\begin{enumerate}
    \item \label{osssem0} Let $\al \in S \setminus E$.
    Assume $\dE{F}{\al} \neq \emptyset$. As consequence of property (G1) of the good ideal $E$, $\tdE{\widehat{F}}{\al} = \emptyset. $
   \item \label{minimidelta1}Let $\al \in E$ and $\be \in \dE{F}{\al}$ and $\te \in \dE{G}{\al}$. If $F \cup G \subsetneq I$, then $\be \wedge \te \in \dE{F \cup G}{\al}$, while if $F \cup G = I$, then $\be \wedge \te = \al$.
    \item \label{minimidelta2}Let $\al \in E$ and $\be \in \dE{F}{\al}$ be consecutive to $\al$ in $E$. Then $\dE{H}{\al}= \emptyset$ for every $H \supsetneq F$.
    
    
\item \label{minimidelta6}
Let $\al \in \N^d$. Assume there exists $\be \in \dE{F}{\al}$ and that $\dE{H}{\al}$ is non-empty for some $H \subsetneq F$. Then there exists $T \subsetneq F$ such that $T \supseteq (F \setminus H)$ and $\dE{T}{\al} \neq \emptyset.$

\end{enumerate}
\end{lemma}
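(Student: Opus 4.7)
All four parts reduce to tracking the componentwise behaviour of the infimum $\be \wedge \te$ of two suitably chosen witnesses, and invoking axiom (G1), and in part (4) also axiom (G2), of the good ideal $E$. In every case one splits $I$ into regions according to whether a coordinate of $\be$ or $\te$ equals $\alpha_i$ or strictly exceeds it.

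For (1), take $\be \in \dE{F}{\al}$ and $\te \in \tdE{\widehat{F}}{\al}$. Direct computation gives $(\be \wedge \te)_i = \alpha_i$ on $F$, since $\beta_i = \alpha_i \le \theta_i$, and on $\widehat{F}$, since $\theta_i = \alpha_i < \beta_i$; hence $\be \wedge \te = \al$. By (G1) this element lies in $E$, contradicting $\al \in S \setminus E$. Part (2) is the same coordinate analysis applied to $\te \in \dE{G}{\al}$: the quantity $(\be \wedge \te)_i$ equals $\alpha_i$ exactly on $F \cup G$, so (G1) places $\be \wedge \te$ in $\dE{F \cup G}{\al}$ when $F \cup G \subsetneq I$, and forces $\be \wedge \te = \al$ when $F \cup G = I$.

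For (3), feed (2) a witness $\te \in \dE{H}{\al}$ with $F \subsetneq H \subsetneq I$; then $\be \wedge \te \in \dE{H}{\al}$, and one checks $\al < \be \wedge \te < \be$. Strict inequality on the left holds because some $i \notin H$ yields $(\be \wedge \te)_i > \alpha_i$, while strict inequality on the right holds because any $i \in H \setminus F$ yields $(\be \wedge \te)_i = \alpha_i < \beta_i$. This contradicts the consecutiveness of $\al$ and $\be$ in $E$.

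Part (4) is the only one where (G1) alone does not suffice, and (G2) enters. Given $\te \in \dE{H}{\al}$ with $H \subsetneq F$, the elements $\be, \te \in E$ agree on $H$, both equal to $\al$ there, yet differ at every index of $F \setminus H \neq \emptyset$, so (G2) applies at any chosen $i \in H$ to yield $\eps \in E$ with $\epsilon_i > \alpha_i$, with $\epsilon_j = \alpha_j$ for $j \in F \setminus H$ (equality forced by (G2) because $\beta_j \neq \theta_j$ there), and with $\epsilon_j > \alpha_j$ for $j \in \widehat{F}$ (since $\widehat{F} \subseteq \widehat{H}$ forces $\beta_j, \theta_j > \alpha_j$ simultaneously). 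Setting $T = \{j \in I : \epsilon_j = \alpha_j\}$ gives $F \setminus H \subseteq T \subseteq F$ with $i \in F \setminus T$, so $T \subsetneq F$ contains $F \setminus H$ and $\eps \in \dE{T}{\al}$. The main obstacle is precisely the bookkeeping in (4): the hypothesis $H \subsetneq F$ is used twice, to guarantee both $T \neq F$, via the (G2)-witness at $i \in H$, and $T \subseteq F$, via the strict inequality on $\widehat{F}$.
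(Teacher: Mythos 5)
Your proof is correct. The paper itself does not spell out an argument for this lemma (it defers to \cite{GMM}), but the proofs there are exactly of this kind: the coordinatewise analysis of $\be\wedge\te$ combined with (G1) for parts (1)--(3), and a (G2)-witness at an index of $H$ for part (4), so your approach is essentially the standard one with no gaps.
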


Let us now recall the definition of complete minimum; these elements are crucially involved in the definition of the Ap\'ery set of a good semigroup $S\subseteq \N^d$ with $d>2$.

\begin{definition} \rm \label{completeinfimum}
Let $S\subseteq \N^d$ be a good semigroup and let $A\subseteq S$ be any subset. We say that $\al\in A$ is a \it complete infimum \rm in $A$ if there exist $\be^{(1)},\ldots,\be^{(r)}\in A$, with $r \geq 2$, satisfying the following properties:
\begin{enumerate}
\item $\be^{(j)} \in \ds{F_j}{\al}$ for some non-empty set $F_j \subsetneq I$.
\item For every $j\ne k \in \lbrace 1, \ldots, r \rbrace$,  $\al = \be^{(j)} \wedge \be^{(k)} $. 
\item $\bigcap_{k=1}^r {F_k}= \emptyset$.
    \end{enumerate}
In this case we write $\al=\be^{(1)}\wt\be^{(2)}\cdots\wt\be^{(r)}$.
\end{definition}

In some proofs we will need to write an element as complete infimum of elements in specific directions. Let us therefore recall the following proposition, which descends directly by property (G2).

\begin{prop}
\label{propG2}
Let $S \subseteq \N^d$ be a good semigroup, $E \subseteq S$ a good ideal and $\al \in E$.  Suppose that there exists $\be \in \dE{F}{\al}$ for some $F\subsetneq I$. Then, there exist $\be^{(1)},\cdots,\be^{(r)}$ with $1 \leq r \leq |F|$, such that  $$\al = \be \wt \be^{(1)}\wt\be^{(2)}\wt\cdots\wt\be^{(r)}.$$
	In particular $\be^{(i)} \in \dE{G_i}{\al}$, with $G_i \supseteq \wh{F}$ and
	$ G_1 \cap G_2 \cap \cdots \cap G_r = \wh{F}. $
	
	From the proof of \cite[Proposition 1.7]{GMM}, it follows also that we can choose each $G_i$ such that $\dE{H}{\al}= \emptyset$ for every $H \supsetneq G_i$.
\end{prop}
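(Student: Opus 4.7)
The idea is to extend the given $\be$ to a complete-infimum decomposition of $\al$ by producing, for each coordinate $i \in F$, a companion element of $E$ that escapes direction $i$ while still matching $\al$ on $\wh{F}$. Axiom (G2) of the good ideal $E$ is tailor-made to produce such companions, so the argument is essentially an iterated application of (G2), once per index $i \in F$.

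Fix $i \in F$. Since $\al \neq \be$, both lie in $E$, and $\alpha_i = \beta_i$, property (G2) yields $\be^{(i)} \in E$ with $\beta^{(i)}_i > \alpha_i$ and, for every $j \neq i$, $\beta^{(i)}_j \geq \min\{\alpha_j, \beta_j\} = \alpha_j$, with equality forced whenever $\alpha_j \neq \beta_j$. Since $\alpha_j < \beta_j$ on all of $\wh{F}$, this forces $\beta^{(i)}_j = \alpha_j$ for every $j \in \wh{F}$. Setting $G_i := \{j \in I : \beta^{(i)}_j = \alpha_j\}$, one has $\be^{(i)} \in \dE{G_i}{\al}$ with $G_i \supseteq \wh{F}$ and $i \notin G_i$.

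Performing the construction for every $i \in F$ and discarding duplicates yields $\be^{(1)}, \ldots, \be^{(r)}$ with $1 \leq r \leq |F|$. Each $G_i$ contains $\wh{F}$ and excludes its corresponding index $i$, so their intersection contains $\wh{F}$ and avoids every element of $F$; thus $\bigcap_{k=1}^{r} G_k = \wh{F}$. Since $F \cap \wh{F} = \emptyset$, the tuple $(\be, \be^{(1)}, \ldots, \be^{(r)})$ verifies the three clauses of Definition~\ref{completeinfimum}, and we conclude $\al = \be \wt \be^{(1)} \wt \cdots \wt \be^{(r)}$.

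For the maximality addendum --- that each $G_i$ may be chosen so that $\dE{H}{\al} = \emptyset$ for every $H \supsetneq G_i$ --- the idea is to start from the $\be^{(i)}$ above and iteratively replace it with an element whose agreement set is strictly larger, combining infima via Lemma~\ref{minimidelta} and re-applying (G2), as in the proof of \cite[Proposition~1.7]{GMM}. The main technical obstacle is guaranteeing that the enlargement procedure never re-adds the forbidden index $i$ (otherwise the equality $\bigcap_k G_k = \wh{F}$ could be spoiled); this is precisely where restricting the enlargement to indices in $F \setminus \{i\}$, as in the cited proof, closes the argument.
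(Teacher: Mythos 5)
Your construction of the companions $\be^{(i)}$ via (G2) is the right starting point, and it does secure clause 1 and clause 3 of Definition~\ref{completeinfimum}, as well as the identities $\be \wedge \be^{(i)} = \al$. The gap is clause 2 for the pairs $(\be^{(i)}, \be^{(i')})$ with $i \neq i' \in F$: you assert that the tuple ``verifies the three clauses'' but never check that $\be^{(i)} \wedge \be^{(i')} = \al$, and the raw output of (G2) does not guarantee this. Indeed, (G2) only forces $\beta^{(i)}_j = \alpha_j$ at the coordinates where $\alpha_j \neq \beta_j$, i.e. on $\wh{F}$; at a coordinate $j \in F \setminus \{i\}$ it merely gives $\beta^{(i)}_j \geq \alpha_j$. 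So it can happen that $\beta^{(i)}_j > \alpha_j$ and $\beta^{(i')}_j > \alpha_j$ for some common $j \in F$, in which case $(\be^{(i)} \wedge \be^{(i')})_j > \alpha_j$ and clause 2 fails. Concretely, with $d=3$, $F=\{1,2\}$, $\al = \boldsymbol{0}$ and $\be = (0,0,5)$, the guarantee of (G2) applied at $i=1$ and at $i=2$ is compatible with returning $\be^{(1)} = (3,4,0)$ and $\be^{(2)} = (2,6,0)$, both lying in $\dE{\{3\}}{\al}$; these are not duplicates, yet $\be^{(1)} \wedge \be^{(2)} = (2,4,0) \neq \al$. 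The correct decomposition here keeps only one of them (so $r=1$), which your ``discard duplicates'' step does not detect.

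This is exactly why the maximality clause you treat as an optional addendum is in fact the engine of the argument in \cite[Proposition 1.7]{GMM}: one must replace each $\be^{(i)}$ by an element of $\dE{G_i}{\al}$ with $G_i$ maximal among the sets $H \supseteq \wh{F}$ such that $\dE{H}{\al} \neq \emptyset$. For two distinct maximal sets $G, G'$ one necessarily has $G \cup G' = I$ --- otherwise Lemma~\ref{minimidelta}.\ref{minimidelta1} would place the infimum of the two chosen elements in $\dE{G \cup G'}{\al}$, contradicting maximality --- and the case $G \cup G' = I$ of that same lemma is precisely what yields the pairwise infimum condition. The resulting complements $\wh{G_k}$ are then pairwise disjoint nonempty subsets of $F$, which also gives $r \leq |F|$ for free. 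What still has to be argued (and is the remaining nontrivial point of the cited proof) is that every $i \in F$ is omitted by at least one maximal $G_k$, so that $\bigcap_k G_k = \wh{F}$; your worry about ``re-adding the forbidden index'' is a symptom of this issue, but it is resolved by proving the existence of some maximal set avoiding $i$, not by controlling a particular enlargement chain.
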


Using the definition of complete infimum we can define a canonical partition of the complementary set $A$ of a given good ideal $E$. \\ 
Let $E$ a good ideal of a good semigroup $S$ and $A=S\setminus E$.
 Given $\al=(\alpha_1,\alpha_2,\ldots, \alpha_d)$ and $\be= (\beta_1,\beta_2,\ldots,\beta_d)$ in $\mathbb{N}^d$, we say that  $\al \leq \leq \be$ if and only if either $\al = \be$ or $\alpha_i<\beta_i$ for every $i\in\{1,\ldots, d\}$. In the second case we say that $\be$ \it dominates \rm $\al$ and use the notation $\al \ll \be$.


\begin{definition} \rm \label{deflivelli}
Define $A$ as above. Set:
$$ B^{(1)}:=\{\al \in A : \al \  \mbox{is maximal with respect to} \le\le\},$$
$$C^{(1)}:= \{ \al \in B^{(1)} : \al=\be^{(1)}\wt\be^{(2)}\cdots\wt\be^{(r)} \mbox{ for } 1<r\leq d\mbox{ and } \bei{k} \in B^{(1)}\},$$
$$ D^{(1)}:=B^{(1)}\setminus C^{(1)}.$$
For $i>1$ assume that $D^{(1)},\dots , D^{(i-1)}$ have been defined and set inductively:
$$ B^{(i)}:=\{\al \in A \setminus (\bigcup_{j < i} D^{(j)}) : \al \  \mbox{is maximal with respect to} \le\le\},$$
$$C^{(i)}:= \{ \al \in B^{(i)} : \al=\be^{(1)}\wt\be^{(2)}\cdots\wt\be^{(r)} \mbox{ for } 1<r\leq d\mbox{ and } \bei{k} \in B^{(i)}\},$$
$$D^{(i)}:=B^{(i)}\setminus C^{(i)}.$$ 
By construction $D^{(i)}\cap D^{(j)}=\emptyset$, for any $i\neq j$ and, 
since the set $S \setminus A$ has a conductor, there exists $N \in \mathbb N_+$ such that $A=\bigcup_{i=1}^N D^{(i)}$.
As in \cite{GMM} we enumerate the sets in this partition in increasing order
setting $A_i:= D^{(N+1-i)}$. Hence 
$A=\bigcup_{i=1}^N A_i.$ 
We call the sets $A_i$ the \it levels \rm of $A$.
\end{definition}

Given $\om \in S$, we can consider the good ideal $E=\om+S$. In this case its complement $A= S \setminus E=\Ap(S, \om)$ is the Ap\'ery set of $S$ with respect to $\om$. The number of levels of an Ap\'ery set is equal to the sum of the components of the element $\om$ \cite[Theorem 4.4]{GMM}.

We recall that, if $\al, \be \in A$, $\al \ll \be$ and $\al \in A_i$, then $\be \in A_j$ for some $j>i$. Moreover, the last set of the partition is $A_N=\Delta(\ga_E)=\Delta^S(\ga_E)$.
If $S$ is local then $A_1=\{\boldsymbol {0}\}$. \\
Several basic properties of the Apéry set and of this partition in levels are listed in the \cite[Lemma 2.3]{GMM}.

Now we restate two key theorems which will be used in many of the subsequent proofs. These are very helpful to control the levels of different elements. 
\begin{theorem}
\label{bianchi}
Let $S$ be a good semigroup, $E \subseteq S$ a good ideal and $A= S \setminus E$.
 Let $\de\in S$, $\te\in \Delta_G^S(\de)\cap A_h$ and assume $\ds{\widehat{G}}{\de}\subseteq A$. Let $\be \in \ds{F}{\de}$ with $\be$ and $\de$ consecutive and $F \supseteq \widehat{G}$.
\begin{enumerate}
\item If $\td{F}{\de} \subseteq A$, then $\be\in A_i$ with $i\leq h$;
\item If $\de\in A$ and $\td{\wh G}{\de}\subseteq A$ then $\de \in A_i$ with $i<h$.
\end{enumerate}
\end{theorem}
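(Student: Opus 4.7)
The plan is to prove part~(1) first and then to deduce part~(2) from~(1) by means of Proposition~\ref{propG2}. For~(1), I would begin by normalizing the combinatorial set-up. Since $F\supseteq\wh{G}$ one has $F\cup G=I$, so Lemma~\ref{minimidelta}(\ref{minimidelta1}), applied to the good ideal $S$ itself with base point $\de$, gives $\be\wedge\te=\de$; and since $\be$ is consecutive to $\de$ in $S$, Lemma~\ref{minimidelta}(\ref{minimidelta2}) forces $\ds{H}{\de}=\emptyset$ for every $H\supsetneq F$, whence $\td{F}{\de}=\ds{F}{\de}$, so that the hypothesis reads simply $\ds{F}{\de}\subseteq A$. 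The argument proper would be by contradiction: suppose $\be\in A_j$ with $j>h$ and set $k_{0}:=N+1-h$, so that $A_h=D^{(k_{0})}=B^{(k_{0})}\setminus C^{(k_{0})}$. Apply Proposition~\ref{propG2} to $\be\in\ds{F}{\de}$ to refine $\be\wedge\te=\de$ into a genuine complete-infimum decomposition
\[
\de=\be\wt\be^{(1)}\wt\cdots\wt\be^{(s)},
\]
with $\be^{(\ell)}\in\ds{L_{\ell}}{\de}$, $L_{\ell}\supseteq\wh{F}$, and $\bigcap_{\ell}L_{\ell}=\wh{F}$. Using the assumption $\be\in A_j$ with $j>h$ together with the ``going-up'' properties of the partition in levels (Lemma~2.3 of \cite{GMM}), I would then lift each $\be^{(\ell)}$ to a corresponding element of $B^{(k_{0})}$ that still pairs with $\te$ via the face structure and, combining these lifted elements, realize $\te$ itself as a complete infimum of elements of $B^{(k_{0})}$. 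This would force $\te\in C^{(k_{0})}$, contradicting $\te\in D^{(k_{0})}=B^{(k_{0})}\setminus C^{(k_{0})}$.

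For~(2), with~(1) available, apply Proposition~\ref{propG2} to $\te\in\ds{G}{\de}$ to write
\[
\de=\te\wt\te^{(1)}\wt\cdots\wt\te^{(r)},
\]
with $\te^{(k)}\in\ds{H_k}{\de}$, $H_k\supseteq\wh{G}$, $\bigcap_{k}H_k=\wh{G}$, and each $\te^{(k)}$ chosen minimal in $\ds{H_k}{\de}$ (hence consecutive to $\de$ in $S$). Because $H_k\supseteq\wh{G}$, one has $\te^{(k)}\in\td{\wh{G}}{\de}\subseteq A$, and similarly $\td{H_k}{\de}\subseteq\td{\wh{G}}{\de}\subseteq A$; consequently the hypotheses of~(1) are satisfied with $F=H_k$ and $\be=\te^{(k)}$, so~(1) yields $\te^{(k)}\in A_{i_k}$ with $i_k\leq h$. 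Thus $\de$ is a complete infimum of elements of $A$ all at level $\leq h$, the top level $h$ being attained by $\te$. A direct analysis of step $k_{0}=N+1-h$ of the construction of the levels — a mixed-level refinement of property~(5) of \cite[Lemma~2.3]{GMM} — then forces $\de\in A_i$ with $i<h$: the element $\de$ cannot lie in $D^{(k_{0})}$, since otherwise the lifted complete-infimum decomposition would place $\de$ in $C^{(k_{0})}$, and it cannot lie in any $D^{(k)}$ with $k<k_{0}$ either, since it would then be strictly dominated (in $\leq\leq$) by one of the $\te,\te^{(k)}$'s that is still available at that earlier step.

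The main obstacle, by some margin, is the lifting step in~(1): one must produce, inside $B^{(k_{0})}$, elements that recast $\te$ as a complete infimum while respecting the face structure imposed by the direction sets $G,F,L_{1},\ldots,L_{s}$. This is precisely where the two containments $\ds{F}{\de}\subseteq A$ and $\ds{\wh{G}}{\de}\subseteq A$ are used: they guarantee that the intermediate elements produced during the lifting stay inside $A$ rather than fall into the ideal $E$, which would break the entire construction. Once this lifting is carried out, the contradiction (and hence both parts of the theorem) follows directly from the inductive definition of the levels $D^{(k)}$.
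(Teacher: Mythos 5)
First, a point of reference: the paper does not prove this statement at all. It is one of the two ``key theorems'' explicitly \emph{restated} from \cite{GMM} (the text immediately afterwards says ``Next results have not been proved previously, hence we include also a proof of them'', marking Theorems \ref{bianchi} and \ref{neri} as imported without proof). So your attempt has to stand on its own, and in part (1) it does not.

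The gap is the ``lifting step'', which you yourself identify as the main obstacle and then leave entirely unexecuted; unfortunately it is not a technicality but the whole content of part (1), and as described it does not look workable. Your target contradiction is $\te\in C^{(k_{0})}$, which by Definition \ref{completeinfimum} requires exhibiting elements $\boldsymbol{\eta}^{(1)},\ldots,\boldsymbol{\eta}^{(r)}\in B^{(k_{0})}$ with $\boldsymbol{\eta}^{(j)}\in \ds{F_j}{\te}$, $\bigcap_j F_j=\emptyset$ and $\boldsymbol{\eta}^{(j)}\wedge\boldsymbol{\eta}^{(k)}=\te$; in particular every witness must lie weakly \emph{above} $\te$ and share coordinates with it. But all the raw material you have produced --- $\be$ and the $\be^{(\ell)}$ coming from Proposition \ref{propG2} applied at $\de$ --- sits over $\de$, on the other side of the infimum: $\be\wedge\te=\de\neq\te$, and a $\be^{(\ell)}\in\ds{L_\ell}{\de}$ with $L_\ell\supseteq\wh{F}$ need not dominate $\te$ on the coordinates in $\wh{G}$ where $\theta_i>\delta_i$. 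No mechanism is given (and none is visible) for converting the hypothesis ``$\be$ lies in a level $j>h$'' --- which is information about what dominates $\be$ --- into the existence of elements of $S$ above $\te$, let alone elements surviving to stage $k_{0}$ of the construction with the prescribed face structure. So part (1) is unproved. Part (2) is in better shape: granting (1), your decomposition $\de=\te\wt\te^{(1)}\wt\cdots\wt\te^{(r)}$ with each $\te^{(k)}\in\td{\wh{G}}{\de}\subseteq A$ is correct, (1) does apply to each $\te^{(k)}$ (the consecutiveness and the containment $\td{H_k}{\de}\subseteq\td{\wh{G}}{\de}\subseteq A$ both check out), and the ``mixed-level refinement'' you appeal to is really just a case split using the recalled facts of \cite[Lemma 2.3]{GMM}: if some witness lies in a level $i_k<h$ then $\de\leq\te^{(k)}$ already forces $\de$ into a level $\leq i_k<h$, and if all witnesses lie in $A_h$ then item (5) of that lemma applies verbatim. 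So the structure (2)$\Leftarrow$(1) is sound, but the theorem is not established until (1) is.
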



\begin{theorem}
\label{neri} 
Let $S$ be a good semigroup, $E \subseteq S$ a good ideal and $A= S \setminus E$.
Let $\al\in A_i$ and let $\te \in \ds{G}{\al}$ be consecutive to $\al$. Assume that $\tdE{\wh G}{\al} \neq \emptyset$. Then $\te \in A_i$. 
\end{theorem}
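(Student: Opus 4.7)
The plan has three stages: first I establish that $\te\in A$, then I bound its level from below by $i$, then from above by $i$.

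For the containment of $\te$ in $A$, fix any $\eps\in\tdE{\wh G}{\al}$; so $\eps\in E$ with $\eps_k=\al_k$ for $k\in\wh G$, $\eps_k\geq\al_k$ for $k\in G$, and $\eps\neq\al$. Suppose for contradiction that $\te\in E$. Then property (G1) of the good ideal $E$ gives $\te\wedge\eps\in E$. A direct coordinate check shows $\te\wedge\eps=\al$: for $k\in G$, $\te_k=\al_k$ and $\eps_k\geq\al_k$ force $\min(\te_k,\eps_k)=\al_k$; for $k\in\wh G$, $\te_k>\al_k$ and $\eps_k=\al_k$ force $\min(\te_k,\eps_k)=\al_k$. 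Hence $\al\in E$, contradicting $\al\in A$.

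For the lower bound, since $\te\geq\al$ componentwise and $\al\in A_i$, Lemma~2.3(4) of \cite{GMM} gives $\te\in A_j$ for some $j\geq i$.

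For the upper bound I rule out $j>i$. Since $\al,\te\in A$ are consecutive in $S$ with $\te\in\ds{G}{\al}$ and $G\subsetneq I$, Lemma~2.3(6) of \cite{GMM} restricts $j$ to $\{i,i+1\}$. Assume toward contradiction $\te\in A_{i+1}$. Applying Lemma~2.3(1) of \cite{GMM} to $\al\in A_i$, there exists $\xi\in A_{i+1}$ either strictly dominating $\al$ or entering a complete-infimum representation of $\al$. In the dominating subcase, $\xi\wedge\te\in S$ has $\al$-coordinates on $G$ and strictly bigger than $\al$ on $\wh G$, so $\xi\wedge\te\neq\al$; consecutiveness of $\al,\te$ in $S$ then forces $\xi\wedge\te=\te$, i.e., $\te\leq\xi$. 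The maximality of $\te$ in the construction step producing $A_{i+1}$ precludes $\te\ll\xi$, so some $k\in\wh G$ satisfies $\te_k=\xi_k$. Combining $\xi$ with the $E$-obstructor $\eps$ through the infimum $\xi\wedge\eps$, and using Proposition~\ref{propG2} together with Lemma~\ref{minimidelta}(1) to control where this infimum lands (inside $E$ or $A$), one constructs an $A$-element sitting strictly above $\te$ at a level at least $i+1$, contradicting the maximality of $\te$ in its construction step. The second subcase of Lemma~2.3(1), where $\al$ already admits a complete-infimum expansion with an $A_{i+1}$-component, is handled analogously by replacing $\xi$ with the appropriate component.

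The main obstacle is the upper-bound stage: the combinatorial bookkeeping needed to extract from $\xi\wedge\eps$ (or its analogues) an element of $A$ strictly dominating $\te$, while simultaneously tracking its level. This is where the delicate interplay between the $E$-obstruction $\eps$ and the level partition of $A$ is essential, and it is the reason the hypothesis $\tdE{\wh G}{\al}\neq\emptyset$ appears in the statement at all.
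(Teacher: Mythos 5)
Your Stages 1 and 2 are correct: if $\te\in E$, then property (G1) of the good ideal $E$ applied to $\te$ and any $\eps\in\tdE{\wh G}{\al}$ gives $\te\wedge\eps=\al\in E$, a contradiction, so $\te\in A$; and the bounds $i\le j\le i+1$ on the level $j$ of $\te$ follow from the items of \cite[Lemma 2.3]{GMM} exactly as you say. Bear in mind, though, that the paper does not reprove this theorem (it is restated from \cite{GMM}), so your argument must be self-contained — and Stage 3, which is the entire content of the statement, is not a proof but a placeholder.

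The gap is twofold. First, the contradiction you aim at is not a contradiction. Membership of $\te$ in $A_{i+1}=D^{(N-i)}\subseteq B^{(N-i)}$ only asserts that no element of $A$ of level at most $i+1$ dominates $\te$ with respect to $\ll$; an element of $A$ ``sitting strictly above $\te$ at a level at least $i+1$'' is perfectly compatible with $\te\in A_{i+1}$ — indeed elements of level $i+2$ and higher lying above $\te$ always exist when $i+1<N$, and $\le$-comparability (as opposed to $\ll$-comparability) between elements of the same level is allowed by the construction. A genuine contradiction would require, for instance, an element of level at most $i+1$ that \emph{dominates} $\te$ in the sense of $\ll$ (against \cite[Lemma 2.3]{GMM}), or showing that $\te$ is a complete infimum of elements of $B^{(N-i)}$, which would place $\te$ in $C^{(N-i)}$ and hence outside $D^{(N-i)}=A_{i+1}$. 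Second, even granting a correct target, the construction is absent: ``using Proposition \ref{propG2} together with Lemma \ref{minimidelta} to control where this infimum lands (inside $E$ or $A$), one constructs\ldots'' decides nothing — whether $\xi\wedge\eps$ lies in $A$ or in $E$ is exactly the issue, no mechanism is given for extracting from it an element with a controlled level and a controlled position relative to $\te$, and the complete-infimum subcase of \cite[Lemma 2.3]{GMM} is waved off as ``analogous.'' You correctly identify this stage as ``the main obstacle,'' but that obstacle \emph{is} the theorem: it is precisely here that the hypothesis $\tdE{\wh G}{\al}\neq\emptyset$ must do work beyond merely keeping $\te$ out of $E$, and that work is not done.
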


Next results have not been proved previously, hence we include also a proof of them.

\begin{lemma}
\label{neroinmezzo}
Let $S$ be a good semigroup and let $A= \bigcup_{i=1}^N A_i$ be the complement of a good ideal $E \subseteq S$.
Let $\al, \be \in A_i$, $\de \in S$, and assume $\al < \de < \be$. Then $\de \in A_i$.
\end{lemma}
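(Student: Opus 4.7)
My plan is to first show that $\de \in A = S \setminus E$, and then to conclude $\de \in A_i$ from the monotonicity of the level function along $\leq$ (item~(4) of \cite[Lemma~2.3]{GMM}): from $\al \in A_i$ and $\al \leq \de \in A$ one gets that the level of $\de$ is at least $i$, and from $\be \in A_i$ and $\de \leq \be$ that it is at most $i$, so $\de \in A_i$.

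To show $\de \in A$, I argue by contradiction. Suppose $\de \in E$. The set $T := E \cap \{\eta \in S : \al \leq \eta \leq \be\}$ is finite, non-empty (it contains $\de$), and closed under $\wedge$ by (G1) applied to the good ideal $E$; it therefore has a unique minimum $\eta_0 \in E$, with $\al < \eta_0 \leq \be$. Pick any $\xi \in S$ that is maximal (w.r.t.\ $\leq$) in $\{\eta \in S : \al \leq \eta,\ \eta < \eta_0\}$: the minimality of $\eta_0$ in $T$ forces $\xi \in A$, and the maximality of $\xi$ makes it consecutive to $\eta_0$ in $S$. Applying the monotonicity again to $\al \leq \xi \leq \be$ puts $\xi \in A_i$.

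Write $\eta_0 \in \ds{G}{\xi}$ with $G = \{j : \xi_j = \eta_{0,j}\} \subsetneq I$. If $\xi \ll \be$ (which covers the case $G = \emptyset$), then $\be \in A_i \subseteq A_1 \cup \cdots \cup A_i$ strictly $\ll$-dominates $\xi \in A_i$, contradicting the $\ll$-maximality of $\xi$ in $A_1 \cup \cdots \cup A_i$ built into the membership $\xi \in A_i = D^{(N+1-i)} \subseteq B^{(N+1-i)}$. Otherwise $H := \{j : \xi_j = \beta_j\}$ is non-empty, one has $H \subseteq G$ and $\be \in \ds{H}{\xi}$; axiom (G2) applied to the pair $(\xi, \be)$ at each $k \in H$ produces a witness $\eps^{(k)} \in \td{F_k}{\xi}$ with $F_k \supseteq \wh H$ and $k \notin F_k$, and Lemma~\ref{minimidelta}(\ref{osssem0}) applied to $\xi \in A$ and $\eta_0 \in \dE{G}{\xi}$ gives $\tdE{\wh G}{\xi} = \emptyset$, hence $\tdE{\wh H}{\xi} = \emptyset$ (since $\wh H \supseteq \wh G$), forcing each $\eps^{(k)} \in A$. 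The goal is then to extract from these witnesses (by suitably iterating (G2) and taking $\wedge$-combinations of the $\eps^{(k)}$'s) a single element $\eps \in A \cap \ds{\wh H}{\xi}$ that lies at level exactly $i$; the identity $\xi = \be \wt \eps$ will then exhibit $\xi$ as a complete infimum of two elements of $A_i$, contradicting item~(5) of \cite[Lemma~2.3]{GMM}.

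The main obstacle is producing this single witness $\eps$ at level exactly $i$ in the second sub-case: monotonicity only gives level of $\eps$ at least $i$, and the naive $\wedge$-intersections $\eps^{(k)} \wedge \be$ collapse to $\xi$ itself, so one must iterate (G2) carefully, using the emptiness from Lemma~\ref{minimidelta}(\ref{osssem0}) to keep the new witnesses in $A$ and the monotonicity to keep them in $A_i$. This step uses the \emph{same-level} hypothesis on $\al, \be$ in an essential way; without it the lemma genuinely fails, since one can easily build configurations with an element of $E$ sandwiched between $A$-elements at different levels.
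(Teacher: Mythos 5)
Your overall strategy is the paper's: reduce to showing $\de \in A$, then derive a contradiction from $\de \in E$ by exhibiting an element of $A_i$ as a complete infimum of elements of $A_i$, against item (5) of \cite[Lemma 2.3]{GMM}. But the proof is not complete: the decisive step --- producing the components of that complete infimum at level \emph{exactly} $i$ --- is precisely the one you flag as ``the goal'' and ``the main obstacle'' and then leave open. As written, this is a genuine gap, not a routine verification.

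The missing ingredient is Theorem \ref{neri}, and the point is that $\de$ itself is the witness it needs. One can work directly with $\al$ and $\be$ (your detour through the minimum $\eta_0$ of $E\cap[\al,\be]$ and its consecutive predecessor $\xi$ is harmless but unnecessary): since $\al,\be\in A_i$ are comparable, they must share a coordinate, so $\be\in\Delta^S_H(\al)$ with $\emptyset\neq H\subsetneq I$, and $\al<\de<\be$ forces $\de\in\Delta^E_F(\al)$ with $F\supseteq H$. Proposition \ref{propG2} (rather than ad hoc applications of (G2), whose outputs need not have pairwise infima equal to the base point) gives $\al=\be\wt\be^{(1)}\wt\cdots\wt\be^{(r)}$ with $\be^{(j)}\in\Delta^S_{G_j}(\al)$, $G_j\supseteq\wh{H}\supseteq\wh{F}$, and each $\be^{(j)}$ may be taken consecutive to $\al$. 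Since $\wh{G_j}\subseteq H\subseteq F$, the element $\de$ lies in $\widetilde{\Delta}^E_{\wh{G_j}}(\al)$, so this set is non-empty and Theorem \ref{neri} pins $\be^{(j)}\in A_i$ --- exactly the ``level equal to $i$'' conclusion that monotonicity alone cannot give you. Item (5) of \cite[Lemma 2.3]{GMM} then places $\al$ in a level strictly below $i$, the desired contradiction. Your steps up to the sticking point (the reduction to $\de\in A$ plus monotonicity, the choice of $\xi\in A_i$ consecutive to $\eta_0$, the exclusion of $\xi\ll\be$, and the use of Lemma \ref{minimidelta}.\ref{osssem0} to keep the (G2)-witnesses in $A$) are all correct; only the final extraction is missing, and it is the heart of the argument.
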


\begin{proof}
It suffices to show $\de \in A$. By way of contradiction suppose $\de \in E$. Since $\al$ and $\be$ are in the same level and they are comparable, they must share at least a coordinate. Hence say that $\be \in \Delta^S_H(\al)$ and $\de \in \Delta^S_F(\al)$ with $F \supseteq H$.
Since $\de \in E$, by Lemma \ref{minimidelta}.\ref{osssem0}, $\widetilde{\Delta}^S_{\wh F}(\al) \subseteq A$. By property (G2), applied to $\al$ and $\be$ following Proposition \ref{propG2}, we can write $\al = \be \wt \be^1 \wt \ldots \wt \be^r$ with $\be^j \in \Delta^S_{G_j}(\al)$ and $\bigcap_{j=1}^r G_j = \wh H \supseteq \wh F$. Moreover, we can assume each $\be_j$ to be consecutive to $\al$ and therefore by Theorem \ref{neri} applied to $\de$ and $\al$, we get $\be_j \in A_i$. This is a contradiction since also $\al, \be \in A_i$.
\end{proof}

\begin{lemma}
\label{minimideldelta}
Let $S \subseteq \mathbb{N}^d$ be a good semigroup and let $A= \bigcup_{i=1}^N A_i$ be the complement of a good ideal $E \subseteq S$. Let $\al \in \mathbb{N}^d$ and suppose $\Delta^S(\al) \subseteq A$ and it is non-empty. Then, the minimal elements of $\Delta^S(\al)$ are all in the same level.
\end{lemma}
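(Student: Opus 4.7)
I would first observe that for each $i$ with $\Delta^S_i(\al)\ne\emptyset$, property (G1) forces a unique minimum $\te^{(i)}$ in $\Delta^S_i(\al)$: the meet of two such minima stays in $\Delta^S_i(\al)$ (the $i$-th coordinate remains $\al_i$, the others remain strictly above $\al$) and is coordinate-wise smaller, contradicting minimality. Hence the minimal elements of $\Delta^S(\al)=\bigcup_i\Delta^S_i(\al)$ are precisely the $\te^{(i)}$, and they are pairwise incomparable since distinct ones disagree with $\al$ on different single coordinates.

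Fix two such minima $\te=\te^{(i)}\in A_h$ and $\be=\te^{(j)}\in A_k$ with $i\ne j$. The idea is to use the meet $\he:=\te\wedge\be\in S$ as a pivot and apply Theorem~\ref{bianchi}(1) in each direction. With $G:=\{l:\te_l=\he_l\}$ and $F:=\{l:\be_l=\he_l\}$, one has $\te\in\Delta^S_G(\he)$, $\be\in\Delta^S_F(\he)$, $i\in G\setminus F$, $j\in F\setminus G$, and, since every coordinate of $\he$ is attained by $\te$ or by $\be$, $G\cup F=I$, so $F\supseteq\wh G$ and $G\supseteq\wh F$. The plan is then: apply Theorem~\ref{bianchi}(1) with $\de=\he$ to get $k\le h$, and then swap the roles of $\te$ and $\be$ (equivalently $G\leftrightarrow F$, $i\leftrightarrow j$) to get $h\le k$, so $h=k$.

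Carrying this out amounts to checking the hypotheses of Theorem~\ref{bianchi}(1). The inclusion $\Delta^S_{\wh G}(\he)\subseteq A$ is a direct coordinate computation: $\he_l=\al_l$ exactly for $l\in\{i,j\}$, and $i\in G$ forces the $i$-th coordinate of any element of $\Delta^S_{\wh G}(\he)$ to be strictly bigger than $\al_i$, so such an element lies in $\Delta^S_j(\al)\subseteq A$. For the consecutiveness of $\be$ with $\he$, any intermediate $\rho\in S$ must satisfy $\rho=\be$ on $F$ (giving $\rho_j=\al_j$) and $\rho_i=\al_i$ (else $\rho\in\Delta^S_j(\al)$ contradicts the minimality of $\be$); since $\rho\ne\he$, some $l\in\wh F\subseteq G$ must have $\rho_l>\he_l=\te_l$, and applying (G2) to the pair $\rho,\te$ at coordinate $i$ produces $\epsilon\in\Delta^S_j(\al)$ with $\epsilon_l=\te_l$ forced by the equality clause of (G2); the minimality of $\be$ then gives $\epsilon\ge\be$ and hence $\te_l\ge\be_l$, contradicting $\be_l>\he_l=\te_l$. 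The inclusion $\widetilde{\Delta}^S_F(\he)\subseteq A$ is verified in the same spirit: for $T\supseteq F$ with $T\ne I$ and $\sigma\in\Delta^S_T(\he)$, the $\al$-coincidences of $\sigma$ are contained in $T\cap\{i,j\}$, so if $i\notin T$ one has $\sigma\in\Delta^S_j(\al)\subseteq A$, while if $i\in T$ the relation $G\cup F=I$ together with $T\ne I$ supplies some $l\in G\setminus T$, and an (G2) argument applied to $\sigma$ and $\te$ at coordinate $i$ yields $\epsilon\in\Delta^S_j(\al)$ with $\epsilon_l=\te_l=\he_l$; minimality of $\be$ then gives $\he_l\ge\be_l$, contradicting $l\in\wh F$ (which forces $\be_l>\he_l$) and excluding $\sigma\in E$.

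The main obstacle is precisely this last point: the set $\Delta^S_{\{i,j\}}(\al)$ of elements agreeing with $\al$ in two coordinates is not directly constrained by the hypothesis $\Delta^S(\al)\subseteq A$, so ruling out membership in the good ideal $E$ requires the careful (G2)-plus-minimality argument sketched above, in which the right coordinate $l$ (in $G\setminus T$, respectively in $\wh F\setminus\{i\}$) must be identified so that the equality clause of (G2) forces a contradiction through the minimality of $\be$ in $\Delta^S_j(\al)$.
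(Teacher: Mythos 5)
Your proof is correct and follows essentially the same route as the paper's: both pivot at the meet $\he=\te\wedge\be$ of the two directional minima, apply Theorem~\ref{bianchi} there, and reduce the only delicate point to showing that no element of $S$ above $\he$ can agree with $\al$ in both coordinates $i$ and $j$, which you rule out via property (G2) and the minimality of $\be$ exactly as the paper does (the paper packages this step through Proposition~\ref{propG2} and Lemma~\ref{minimidelta}.\ref{minimidelta1} rather than invoking the equality clause of (G2) directly). Your explicit verification that $\be$ and $\he$ are consecutive, and the symmetric double application of Theorem~\ref{bianchi} in place of the paper's ``without loss of generality $h\leq i$'', are presentational differences only.
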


\begin{proof}
By property (G1), every set of the form $\Delta^S_k(\al)$ has only one minimal element.
Thus we assume that at least two sets of the form $\Delta^S_k(\al)$ are non-empty otherwise there is nothing to prove.
By possible permuting the indexes, suppose $\Delta^S_1(\al), \Delta^S_2(\al) \neq \emptyset$ and call $\be, \te$ their minimal elements. Thus $\de:= \be \wedge \te \in \Delta^S_{1,2}(\al)$. Say that $\be \in \Delta^S_F(\de)$ and $\te \in \Delta^S_G(\de)$ with $1 \in \wh G \subseteq F$ and $2 \in \wh F \subseteq G$. Suppose that $\be \in A_i$ and $\te \in A_h$ with $h \leq i$.
To prove that $h=i$ we 
apply Theorem \ref{bianchi} to the pair $\te, \de$ to show that $h \geq i$ (observe that by definition $\be$ is a minimal element in $\Delta^S_F(\de)$). Hence we need to verify that the assumption of Theorem \ref{bianchi} are satisfied, and show that $\Delta^S_{\wh G}(\de) \cup \widetilde{\Delta}^S_{F}(\de) \subseteq A$. 
Pick $\he \in \Delta^S_{\wh G}(\de) \cup \widetilde{\Delta}^S_F(\de)$. Then $\eta_1 = \delta_1 = \alpha_1$ and $\eta_j \geq \delta_j > \alpha_j$ for $j \neq 1,2$.  
If $\eta_2 > \delta_2$, then $\he \in \Delta^S_1(\al) \subseteq A$. In particular, since $2 \not \in F$, it follows that $\Delta^S_{\wh G}(\de) \cup \Delta^S_{U}(\de) \subseteq \Delta^S_1(\al) \subseteq A$ for every $U \supseteq F$ such that $2 \not \in U$.

Suppose there exists $\he \in \Delta^S_{U}(\de)$ with $F \cup \{2\} \subseteq  U$. We show that this will contradict the minimality of $\be$ in $\Delta^S_1(\al)$. Applying property (G2) to $\he$ and $\de$ as in Proposition \ref{propG2}, we find elements in some sets $\Delta_{H_1}^S(\de), \ldots, \Delta_{H_r}^S(\de)$ such that $H_1 \cap \ldots \cap H_r = \wh U$. In particular, since $2 \not \in \wh U$, we can find  
some element in a set $\Delta_H^S(\de)$ such that $H \cup U = I$ and $2 \not \in H$. Hence $H \nsubseteq F$. It follows that $F \subsetneq (H \cup F) \subseteq I \setminus \lbrace 2 \rbrace$ and by Lemma \ref{minimideldelta}.\ref{minimidelta1},
 $ \Delta_{H \cup F}^S(\de) \neq \emptyset $. A minimal element $\om$ in $\Delta_{H \cup F}^S(\de)$ is now an element of $\Delta^S_1(\al)$. Pick $j \in H \setminus F$. 
 Observing that $\beta_j > \omega_j = \delta_j$ we contradict the minimality of $\be$.
\end{proof}

While it follows easily by definition, that if $\al \in A_i$ then there exists $\te \in A_{i+1}$ such that $\te \geq \al$, it is not straightforward to see that there there always exists also $\be \in A_{i-1}$ such that $\be \leq \al$ (if $d=2$ this is proved in \cite[Proposition 4]{DGM}).

\begin{prop}
\label{livelloinferiore}
Let $S$ be a good semigroup and let $A= \bigcup_{i=1}^N A_i$ be the complement of a good ideal $E \subseteq S$. For $i > 1$, given $\al \in A_i$ there exists always some $\be \in A_{i-1}$, $\be < \al$.
\end{prop}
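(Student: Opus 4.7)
The plan is to construct a strictly decreasing chain of elements of $A$ starting from $\al$ and to control how the level changes along it. I would set $\al^{(0)} := \al$ and, inductively, let $\al^{(j+1)}$ be any element of $A$ strictly below $\al^{(j)}$ that is consecutive to $\al^{(j)}$ in $A$. Because $E$ is a proper good ideal we have $\bs{0} \in S \setminus E = A$, and since $\bs{0}$ is the unique minimum of $\mathbb{N}^d$ it is the unique minimum of $A$ as well; as the set $\{\de \in A : \de \le \al\}$ is finite, the chain will terminate at some $\al^{(m)} = \bs{0}$.

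The key input is the consecutive-levels property recorded in \cite[Lemma 2.3]{GMM}: for two elements of $A$ consecutive in $A$ with one strictly below the other, the level-index of the smaller one is either equal to, or exactly one less than, that of the larger one. Applied to each step $\al^{(j+1)} < \al^{(j)}$ of the chain, this forces the level-index of $\al^{(j+1)}$ to be either the level-index of $\al^{(j)}$ or one less. Using that $\bs{0} \in A_1$, the sequence of level-indices along the chain will start at $i$ and end at $1$ with decrements of at most one per step, so there must exist an index $j^\star$ with $\al^{(j^\star)} \in A_i$ and $\al^{(j^\star + 1)} \in A_{i-1}$. Taking $\be := \al^{(j^\star + 1)}$ then yields an element of $A_{i-1}$ with $\be < \al^{(j^\star)} \le \al$, as required.

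The most delicate point I expect is the justification that $\bs{0} \in A_1$ in the non-local setting, since the preliminaries record $A_1 = \{\bs{0}\}$ only when $S$ is local. I would argue that $\bs{0}$ is maximal with respect to $\le\le$ in every residual set $A \setminus (D^{(1)} \cup \cdots \cup D^{(j-1)})$ in which it appears (nothing can dominate $\bs{0}$ strictly in every coordinate unless it has all positive coordinates, and such elements are peeled off by the upper $D^{(k)}$'s first), and that as the partition progresses the potential witnesses realising $\bs{0}$ as a complete infimum (elements of $A$ with zero coordinates precisely on some proper non-empty subset of indices, positive elsewhere) are progressively exhausted by the successive $D^{(k)}$'s; the last step of the construction is then forced to assign $\bs{0}$ to $D^{(N)} = A_1$. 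Once $\bs{0} \in A_1$ is secured, the chain-descent together with the consecutive-levels property completes the proof; the remaining verifications (existence of the chain, termination at $\bs{0}$, level drop bounded by one) are immediate from the well-foundedness of $\le$ on $\mathbb{N}^d$ and from \cite[Lemma 2.3]{GMM}.
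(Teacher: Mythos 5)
Your argument is correct, and it takes a genuinely different --- and considerably shorter --- route than the paper's. The paper fixes $\al$ minimal in $A_i$, assumes no element of $A_{i-1}$ lies below it, chooses $\be \in A_{i-1}$ making $\de = \al \wedge \be$ maximal, and then runs a two-case analysis (driven by property (G2) via Proposition \ref{propG2} together with Theorems \ref{bianchi} and \ref{neri}) to produce $\te \in A_{i-1}$ with $\te \wedge \al > \de$, a contradiction. You instead descend a saturated chain of consecutive elements of $A$ from $\al$ down to $\boldsymbol{0}$ and invoke the consecutive-levels property of \cite[Lemma 2.3(6)]{GMM}, which caps the level drop at each step at one; a discrete intermediate-value argument then forces the chain to enter $A_{i-1}$ at a point below $\al$. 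The chain part is sound: the set $\{\de \in A : \de \leq \al\}$ is finite and contains $\boldsymbol{0}$, so the chain exists and terminates at $\boldsymbol{0}$; and for consecutive $\al^{(j+1)} < \al^{(j)}$ one has either $\al^{(j+1)} \ll \al^{(j)}$ or $\al^{(j)} \in \Delta^S_F(\al^{(j+1)})$ for the non-empty proper set $F$ of shared coordinates, so the cited lemma (stated for elements consecutive in $S$ \emph{or in} $A$) applies at every step. What your route buys is brevity, by outsourcing all the hard combinatorics to an already-published lemma; what the paper's route buys is independence from that lemma and an argument carried out entirely with the tools (Theorems \ref{bianchi} and \ref{neri}) set up for the rest of the article.

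The one spot to repair is your justification of $\boldsymbol{0} \in A_1$. The parenthetical claim that elements with all positive coordinates are ``peeled off by the upper $D^{(k)}$'s first'' is false as stated (such elements can survive several stages by landing in the sets $C^{(j)}$), and the ``progressive exhaustion of witnesses'' step is not substantiated. But the fact itself follows in one line from \cite[Lemma 2.3(4)]{GMM}: if $\boldsymbol{0} \in A_{i_0}$, then every $\be \in A$ satisfies $\be \geq \boldsymbol{0}$ and hence lies in $A_{i_0} \cup \cdots \cup A_N$, so $A_j = \emptyset$ for all $j < i_0$; since every level of the partition is non-empty by construction, $i_0 = 1$. (The paper itself asserts $\boldsymbol{0} \in A_1$ without comment in the proof of Theorem \ref{thmnonlocallevels}.) With that substitution your proof is complete.
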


\begin{proof}
We can restrict to assume $\al$ to be a minimal element in $A_i$ with respect to $\leq$. Looking for a contradiction we suppose that for every $\be \in A_{i-1}$, $\al \wedge \be \neq \be $.
It is always possible to find a $\be \in A_{i-1}$ such that $\de = \al \wedge \be$ is maximal. To conclude we have to show the existence of $\te \in A_{i-1}$ such that $\te \wedge \al > \de$.
Say that $\al \in \Delta^S_F(\de)$ and $\be \in \Delta^S_G(\de)$ with $G \supseteq \widehat{F}$. 
We consider different possible cases: \\
\bf Case 1: \rm $ \Delta^E_{H}(\be) \neq \emptyset$ for some $H \nsubseteq F$. \\
Let $\om \in \Delta^E_{H}(\be)$ and apply property (G2) as in Proposition \ref{propG2} to $\be$ and $\om$ to find non-empty sets $\ds{V_1}{\be}, \ldots, \ds{V_r}{\be}$ such that $V_1 \cap \cdots \cap V_r = \wh H$. 
Since $\wh F \nsubseteq \wh H $, we can find $\te \in \ds{V}{\be}$ with $\wh H \subseteq V $, $\wh F \nsubseteq V $.
By Lemma \ref{minimidelta}.\ref{osssem0}, $\td{\widehat{H}}{\be} \subseteq A$. Hence $\te \in A $ and we may assume without restrictions $\te$ and $\be$ to be consecutive. Theorem \ref{neri} implies $\te \in A_{i-1}$. By construction $\te \geq \be$ and there exists $k \not \in F$ such that $\theta_k > \beta_k$. Therefore $\te \wedge \al > \de$. \\
\bf Case 2: \rm $ \Delta^E_{H}(\be) = \emptyset$ for every $H \nsubseteq F$. \\   
Notice that this hypothesis implies $\widetilde{\Delta}^S_{\widehat{F}}(\be) \subseteq A $.
Pick $\he$ such that $\de \leq \he < \be $ and $\he, \be $ and are consecutive. Say that $\be \in \Delta^S_{U}(\he)$ with $U \supseteq \widehat{F}$.   
 Apply again property (G2) using Proposition \ref{propG2}. to $\he$ and $\be$ to find an element $\te \in \ds{V}{\he}$ with $V \supseteq \wh U$ and $\wh F \nsubseteq V $. Following Proposition \ref{propG2},
 we can also assume $\Delta^S_H(\he) = \emptyset$ for every $H \supsetneq V$ and therefore that $\te$ and $\he$ are consecutive.
  Since $\wh V \nsubseteq F$ we get
 $\widetilde{\Delta}^S_{\widehat{V}}(\be) \subseteq A $. We claim that this implies also $\widetilde{\Delta}^S_{\widehat{V}}(\he) \subseteq A $. Indeed if $\boldsymbol{\tau} \in \widetilde{\Delta}^E_{\widehat{V}}(\he)$, we would have $\boldsymbol{\tau} \wedge \be \in \widetilde{\Delta}^E_{\widehat{V} \cup U}(\he) = \widetilde{\Delta}^E_{U}(\he)$. Since $\be$ and $\he$ are consecutive, $\boldsymbol{\tau} \wedge \be = \be$ and hence $\boldsymbol{\tau} \in \widetilde{\Delta}^E_{\widehat{V}}(\be)$, a contradiction.
 
 Now, if $\he \in A_{i-1}$ we can just replace $\be$ by $\he$ and iterate, possibly using again Case 1 (notice that also $\al \wedge \he = \de$). Hence, suppose $\he \in A_{i-2} \cup E$. 
If $\he \in A_{i-2}$, since $\he, \be$ are consecutive, by Theorem \ref{neri} we must have $\widetilde{\Delta}^S_{\widehat{U}}(\he) \subseteq A$. If instead $\he \in E$, using that $\widetilde{\Delta}^S_{\widehat{V}}(\he) \cup \widetilde{\Delta}^S_{U}(\he) \subseteq A $, by \cite[Proposition 1.4]{GMM}, 
we get $\Delta^S_{\wh U}(\he) \cup \Delta^S_{V}(\he) \subseteq A$. Since $\Delta^S_H(\he) = \emptyset$ for every $H \supsetneq V$, it follows 
 $\widetilde{\Delta}^S_{V}(\he) \subseteq A$.
In both cases 
 $\te \in A$ and the assumptions of Theorem \ref{bianchi} are satisfied both if we apply it to $\te$, $\he$ or to $\be$, $\he$. This implies that
 $\te $ and $\be$ are in the same level. Finally, observe that $\theta_j \geq \eta_j \geq \delta_j$ and by the choice of $V$ there exists $k \not \in F$ such that $\theta_k > \beta_k = \delta_k$. Thus, we can conclude as in Case 1.
\end{proof}

\section{Duality of the Ap\'ery sets of symmetric and almost symmetric good semigroups}

In this section we extend to good semigroups in $\N^d$ the results on the duality of levels of Ap\'ery set of symmetric and almost symmetric good semigroups, proved in the case $d=2$ in \cite[Section 5]{DGM} and \cite[Section 5]{DGM2}. 

Let $S$ be a good semigroup.
Recall that an element $\al \in S$ is \it absolute \rm (sometimes also called maximal) if $\Delta^S(\al) = \emptyset$. Then $S$ is symmetric if and only if for every $\al \in \mathbb{Z}^d$, $\al \in S$ if and only if $\Delta^S(\g - \al)= \emptyset$. In a symmetric good semigroup, the absolute elements are dual with respect to $\g$ in the sense that if $\al$ in absolute then also $\g - \al \in S$ and it is absolute.

The set of pseudo-frobenius element of $S$ is the set $\pf(S)$ of elements $\al \in \mathbb{N}^d \setminus S$ such that $\al + \be \in S$ for every nonzero element $\be \in S$.
A good semigroup $S$ is almost symmetric if and only if $$\pf(S) = \Delta(\g) \cup \lbrace \al \in (\N^2 \setminus S) \ |\ \Delta^S(\g-\al) = \emptyset  \rbrace.$$
For an overwiew on properties of symmetric and almost symmetric good semigroup in connection with the Ap\'ery set we refer to \cite[Section 4]{DGM} and \cite[Section 4]{DGM2}.

In the case $d=1$, it is well-known that symmetric and almost symmetric numerical semigroups are characterized by duality properties on the elements of Ap\'ery set, or on the pseudo-Frobenius elements, with respect to the largest element in the set, see \cite[Proposition 4.10]{libropedro} and \cite[Theorem 2.4]{nari}.


 In the case of symmetric and almost symmetric good semigroups some correspondent, but less intuitive, duality relations do exist for the levels of the partition of $\Ap(S)$. 
 In general if $A= \bigcup_{i=1}^N A_i$ is the complement of a good ideal $E$, for $\om \in \N^2$, define $\om^{\prime}:= \g_E - \om.$ For each level $A_i$,  set 
 $$ A_i^{\prime} := \left( \bigcup_{\om \in A_i} \Delta^S(\om^{\prime}) \right) \setminus \left( \bigcup_{\om \in A_j \mbox{, } j < i} \Delta^S(\om^{\prime}) \right).$$
 
 We want to prove the following theorem for arbitrary $d \geq 2$. For a good semigroup $S$ having Ap\'{e}ry set $ \Ap(S)=\bigcup_{i=1}^e A_i $, define $$ Z:= \pf(S) \cup \lbrace \boldsymbol{0} \rbrace, \quad W := \lbrace \boldsymbol{0} \rbrace \cup \Delta(\g + \e) \cup \lbrace \al \in \bigcup_{i=2}^{e-1} A_i \ |\ \al - \e \not \in \pf(S)  \rbrace.$$ It can be shown that the sets $Z$ and $W$ are complement of good ideals of some opportune semigroup, exactly as in \cite[Proposition 5.3]{DGM2}. Hence we can write their partitions $ Z=\bigcup_{h=1}^n Z_h $ and $ W=\bigcup_{i=1}^m W_i $. Then:
 
 \begin{theorem} 
\label{dualityalmost}
Let $S \subseteq \N^d$ be a good semigroup and let $A= \Ap(S)$. 
Then:
\begin{itemize}
    \item $S$ is symmetric if and only if $A_i^{\prime}= A_{e-i+1}$ for every $i=1, \ldots, e.$
    \item $S$ is almost symmetric if and only if $Z_h^{\prime}= Z_{n-h+1}$ for every $h=1, \ldots, n$ and $W_i^{\prime}= W_{m-i+1}$ for every $i=1, \ldots, m.$
\end{itemize}  
\end{theorem}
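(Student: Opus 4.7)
The plan is to extend the $d=2$ arguments of \cite{DGM, DGM2} to arbitrary $d \geq 2$ by exploiting the finer level-control results from the preliminaries, especially Theorem \ref{bianchi}, Theorem \ref{neri}, Lemma \ref{minimideldelta} and Proposition \ref{livelloinferiore}. As a first step I would verify that the sets $A_i'$, $Z_h'$, $W_i'$ are well defined and that each $\Delta^S(\om')$ appearing in their definition is non-empty. In the symmetric case, the characterization $\al \in S \iff \Delta^S(\g - \al) = \emptyset$ gives, for every $\om \in \Ap(S) \subseteq S$, that $\om - \e \notin S$, hence $\Delta^S(\g_E - \om) = \Delta^S(\g + \e - \om) \neq \emptyset$, where I use $\g_E = \g + \e$ for $E = \e + S$.

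For the forward implication (symmetric $\Rightarrow$ duality of levels), I would proceed by induction on $i$, showing $A_i' \subseteq A_{e-i+1}$ and concluding equality by exhaustion, since the levels partition $\Ap(S)$. The base case $i = 1$ reduces to $A_1 = \{\boldsymbol{0}\}$ and $A_1' = \Delta^S(\g_E)$, which is precisely the top level $A_e$ by the properties listed after Definition \ref{deflivelli}. For the inductive step, given $\om \in A_i$, Proposition \ref{livelloinferiore} produces $\be \in A_{i-1}$ with $\be \leq \om$, which dualizes to $\g_E - \om \leq \g_E - \be$. By Lemma \ref{minimideldelta}, the minimal elements of $\Delta^S(\g_E - \om)$ all lie in a common level, and Theorems \ref{bianchi} and \ref{neri} allow to conclude that this level sits exactly one position above the level of the minimal elements of $\Delta^S(\g_E - \be)$. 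The cleaning step subtracting lower-level contributions in the definition of $A_i'$ ensures that $A_i'$ lands in $A_{e-i+1}$ rather than spilling into higher levels.

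For the converse direction, if the equalities $A_i' = A_{e-i+1}$ hold, taking $i = e$ forces $A_e' = \{\boldsymbol{0}\}$, and unwinding this equality across each level translates exactly into the characterization $\al \in S \iff \Delta^S(\g - \al) = \emptyset$, i.e. symmetry of $S$. The almost symmetric case follows the same blueprint but first requires verifying, along the lines of \cite[Proposition 5.3]{DGM2}, that $Z$ and $W$ are complements of good ideals in suitable good semigroups, so that the partitions $Z = \bigcup Z_h$ and $W = \bigcup W_i$ are defined. Using the characterization $\pf(S) = \Delta(\g) \cup \{\al \in \N^d\setminus S : \Delta^S(\g - \al) = \emptyset\}$, the level-shift argument then applies separately on $Z$ (capturing the pseudo-Frobenius part) and on $W$ (the remaining part of the Apéry set), yielding the two duality identities simultaneously.

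The main obstacle is the inductive step in the forward direction when $d \geq 3$: unlike in dimension two, an element $\om \in A_i$ may be a complete infimum $\om = \om^{(1)} \wt \cdots \wt \om^{(r)}$ with $r > 2$, and the dual set $\Delta^S(\g_E - \om)$ must interact correctly with complete infima among its minimal elements. Controlling this requires Proposition \ref{propG2} together with a careful application of Theorem \ref{neri}, propagating the level information in each direction $F \subsetneq I$ separately. This is precisely where the new preliminary results (Lemma \ref{neroinmezzo}, Lemma \ref{minimideldelta} and Proposition \ref{livelloinferiore}) become essential, since the $d = 2$ proofs rely only on pairwise infima and thus do not generalize verbatim.
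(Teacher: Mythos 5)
Your overall strategy matches the paper's: the paper also reduces the statement to a duality theorem for an abstractly defined ``symmetric complement'' (Theorem \ref{dualmain}), proved via two inductions on levels that use Proposition \ref{livelloinferiore}, Lemma \ref{minimideldelta}, and Theorems \ref{bianchi} and \ref{neri}, with the translation to the symmetric and almost symmetric cases (and the verification that $Z$ and $W$ are complements of good ideals) delegated to the dimension-independent arguments of \cite{DGM} and \cite{DGM2}. However, your proposal has a genuine gap in the inductive step, in two respects. First, you run a single induction, whereas two separate ones are needed: a \emph{decreasing} induction (the paper's Lemma \ref{dualitylemma1}, which uses Lemma \ref{tuttedirezioni} to produce, for each coordinate $k$, an element of $A_{i+1}$ above $\al$ that strictly increases the $k$-th coordinate) establishing that $\Delta^S(\g_E-\al)$ meets no level below $N-i+1$, and an \emph{increasing} induction (Proposition \ref{dualityprop2}) pinning the minimal elements of $\Delta^S(\g_E-\al)$ to exactly $A_{N-i+1}$. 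Your single induction conflates an upper and a lower bound that are proved by different mechanisms. (Incidentally, the level of the dual of $\om\in A_i$ sits one position \emph{below} that of the dual of $\be\in A_{i-1}$, not above.)

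Second, and more seriously, the step ``Theorems \ref{bianchi} and \ref{neri} allow to conclude'' hides the hardest point. Having chosen $\be\in A_{i-1}$ with $\be<\al$, $\al\in\Delta^S_F(\be)$ and $\widetilde{\Delta}^E_{\wh F}(\be)=\emptyset$, one must produce a minimal element $\de$ of $\Delta^S_k(\g_E-\be)$ for some $k\in\wh F$: only for such $k$ does one get $\de\gg\g_E-\al$, which is what allows the comparison, via Theorem \ref{bianchi}, between the level of $\de$ (known by induction to be $N-i+2$) and the level of a minimal element of $\Delta^S(\g_E-\al)$. The existence of such a direction $k$ is not automatic; it is exactly the content of the paper's Lemma \ref{spazivuotiduali}, whose proof is a substantial argument (a careful construction of auxiliary elements $\de^{(j)}$ on half-lines $T_j$, followed by a three-part contradiction) exploiting the symmetric complement hypothesis. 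Your proposal never identifies this existence statement, and without it the inductive step does not close. Everything else in your outline (the base case, the role of Lemma \ref{minimideldelta} in reducing to a single minimal element, the treatment of $Z$ and $W$) is consistent with the paper.
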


\begin{proof}
This result can be proved exactly with the same method used in \cite[Theorem 9]{DGM} and \cite[Theorem 5.6]{DGM2} after proving the next general result that we state as Theorem \ref{dualmain}.
\end{proof}

\begin{ex}
Let us consider the good semigroup $S\subseteq \N^3$, having elements $\ll \boldsymbol{c} + \boldsymbol{1}$ equal to 
\small{\begin{align*}
\text{Small}(S)=&\{ ( 2, 2, 3 ), ( 2, 2, 4 ), ( 2, 2, 5 ), ( 2, 2, 6 ), ( 2, 4, 3 ), ( 2, 4, 4 ), ( 2, 4, 5 ), ( 2, 4, 6 ), ( 2, 5, 5 ), ( 2, 5, 6 ),\\
& ( 2, 6, 3 ), ( 2, 6, 4 ), ( 2, 6, 5 ), ( 2, 6, 6 ), ( 3, 2, 3 ), ( 3, 2, 4 ), ( 3, 4, 3 ), ( 3, 4, 4 ), ( 3, 4, 5 ), ( 3, 4, 6 ), \\
& ( 3, 5, 3 ), ( 3, 5, 4 ),( 3, 5, 5 ), ( 3, 5, 6 ), ( 3, 6, 5 ), ( 4, 2, 3 ), ( 4, 2, 4 ), ( 4, 2, 5 ), ( 4, 2, 6 ), ( 4, 4, 3 ),\\
& ( 4, 4, 4 ), ( 4, 4, 5 ), ( 4, 5, 4 ), ( 4, 5, 5 ), ( 4, 6, 3 ), ( 4, 6, 4 ), ( 4, 6, 6 ) \}
\end{align*}
}\normalsize
Using the procedure described in \cite[Proposition 1.6]{NG2}, it is possible to see that the \emph{length} and \emph{genus} of the good semigroup are both equal to the half of the sum of the components of the conductor. This imply that $S$ is a symmetric good semigroup (see \cite[Theorem 2.3]{delgado}).
In Figure \ref{disegnogrande} is represented the Ap\'{e}ry set of $S$. 
The levels that correspond to each other following the duality relation of Theorem \ref{dualityalmost} are represented with the same color.

\begin{figure}[H]
\centering
\includegraphics[scale=0.75]{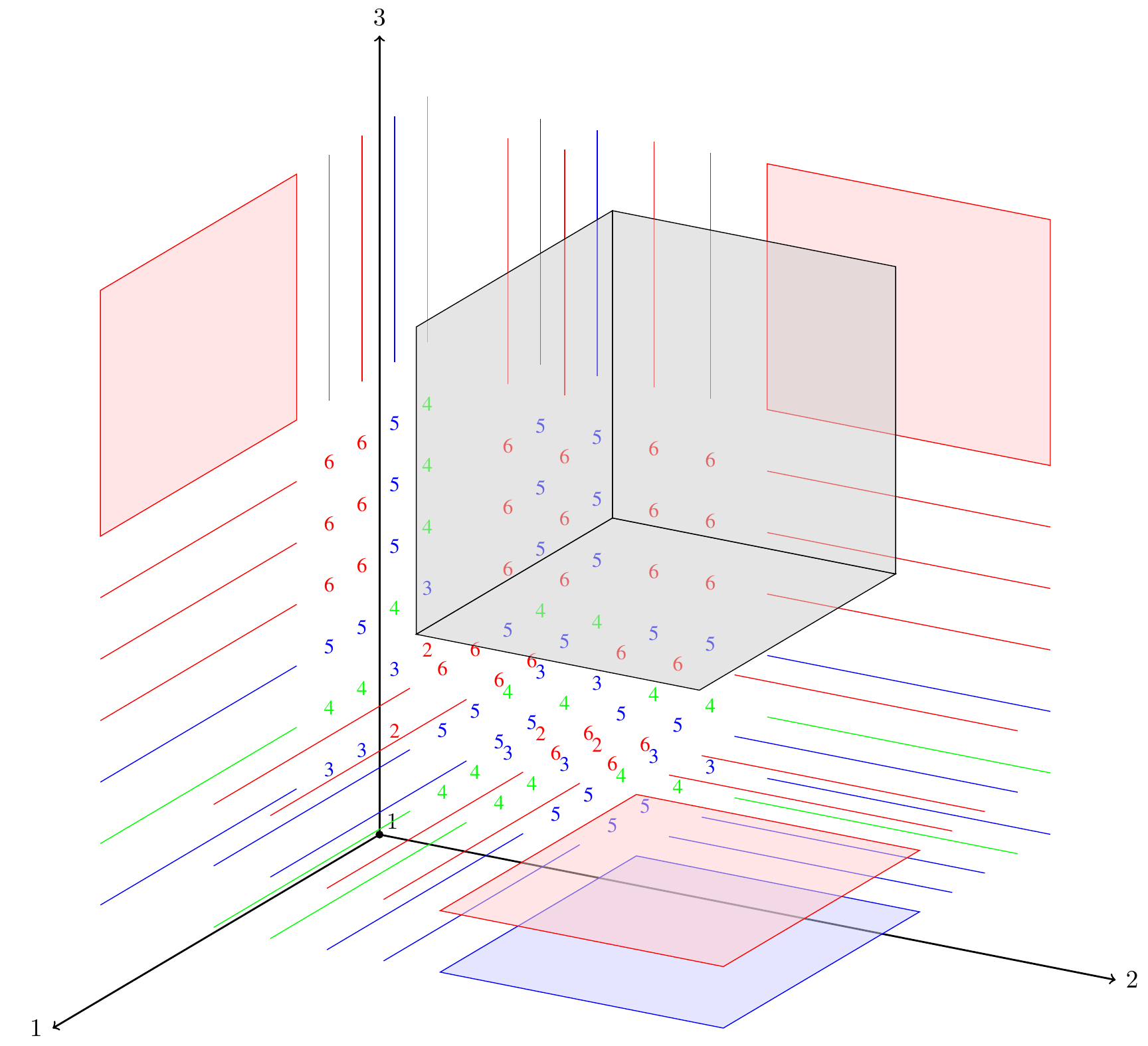}
\caption{\footnotesize The representation of the Ap\'{e}ry set of the good semigroup $S$. Levels 1-7 are black; levels 2-6 are red; levels 3-5 are blue; level 4 is green.}
\label{disegnogrande}
\end{figure}
\end{ex}

We give a general definition.



\begin{definition}
 \label{symmetricduals}
  \rm Let $S$ be a good semigroup and let $A= \bigcup_{i=1}^N A_i$ be the complement of a good ideal $E \subseteq S$. 
  We say that $A$ is a \it symmetric complement \rm of $E$ if: 
  \begin{itemize}
  \item $A_1 = \{ \boldsymbol{0} \}$.
  \item $\al \in E$ if and only if $\Delta^S(\g_E - \al) = \emptyset$.
  \item $\al \in A$ if and only if $\Delta^S(\g_E - \al) \subseteq A$ and it is non-empty.
  \end{itemize}
 \end{definition}

As consequence of \cite[Section 4]{DGM} and \cite[Section 4]{DGM2} it follows that the Ap\'ery set of a symmetric good semigroups and the sets $Z$ and $W$ in the case of an almost symmetric good semigroups are symmetric complements. This is proved for $d=2$ but the proof does not depend on the number $d$, hence it follows by the same argument for any $d$. Hence to prove Theorem \ref{dualityalmost} it is sufficient to prove next theorem.

\begin{theorem}
\label{dualmain}
Let $S$ be a good semigroup and let $A= \bigcup_{i=1}^N A_i$ be the complement of a good ideal $E \subseteq S$. Suppose $A$ is a symmetric complement. Then $A_i^{\prime}= A_{N-i+1}$ for every $i=1, \ldots, N.$
\end{theorem}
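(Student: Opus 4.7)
The plan is to follow the $d=2$ strategy of \cite[Theorem 9]{DGM} and \cite[Theorem 5.6]{DGM2}, adapted to arbitrary $d$ via the preliminary results of Section 2. The central tool is the involution $\al \mapsto \al' := \g_E - \al$, which satisfies the elementary property that $\be \in \Delta^S(\om')$ if and only if $\om \in \Delta^S(\be')$. Combined with the symmetric complement hypothesis (which guarantees that $\Delta^S(\al')$ is a non-empty subset of $A$ exactly when $\al \in A$), this yields $A = \bigcup_{\om \in A} \Delta^S(\om')$ and, for each $\be \in A$,
\[
\min\bigl\{\, j : \Delta^S(\be') \cap A_j \neq \emptyset \,\bigr\} = \text{common level of the minimal elements of } \Delta^S(\be'),
\]
which is well-defined by Lemma \ref{minimideldelta} together with the fact, recalled in Section 2, that elements of $A$ dominating some $\al \in A_j$ lie in levels $\geq j$. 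Through the involution, $\be \in A_i'$ is equivalent to this minimum being equal to $i$, so the theorem reduces to proving that for every $\be \in A_i$ the minimal elements of $\Delta^S(\be')$ all lie in $A_{N-i+1}$.

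I would establish this by induction on $i$. The base case $i=1$ forces $\be = \boldsymbol{0}$ by the first requirement in the definition of symmetric complement, so $\Delta^S(\be') = \Delta^S(\g_E) = A_N$. For the inductive step, use Proposition \ref{livelloinferiore} to obtain $\be_0 \in A_{i-1}$ with $\be_0 < \be$. The argument then splits according to whether $\be_0 \ll \be$ or $\be \in \Delta^S_F(\be_0)$ for some non-empty $F \subsetneq I$. In the latter case a direct coordinate comparison shows $\Delta^S_k(\be_0') \subseteq \Delta^S_k(\be')$ for every $k \in F$; the minimal elements of $\Delta^S(\be_0')$, which lie in $A_{N-i+2}$ by the inductive hypothesis, can then be moved down one level via Proposition \ref{propG2} and Theorem \ref{neri}, realizing a minimal element of $\Delta^S(\be')$ in $A_{N-i+1}$. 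A parallel upward construction together with the involution yields that no element of $\Delta^S(\be')$ can sit in a strictly smaller level, completing the inductive step.

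The principal obstacle is the case analysis for the remaining situations: the dominating case $\be_0 \ll \be$, in which every element of $\Delta^S(\be_0')$ strictly exceeds $\be'$ coordinate-wise and therefore lies in no $\Delta^S_k(\be')$; and, within the shared-coordinate case, the indices $k \notin F$, which produce elements of $\Delta^S_k(\be_0')$ strictly above $\be'$. In these situations one must descend toward the boundary of $\be'$ by iterated complete infima (Proposition \ref{propG2}), transferring level information through chains of consecutive elements using Theorems \ref{bianchi} and \ref{neri} and sandwiching with Lemma \ref{neroinmezzo}. The bookkeeping over the admissible index subsets, governed by Lemma \ref{minimidelta}, grows rapidly with $d$ and represents the genuine technical complication beyond the $d=2$ case treated in \cite{DGM,DGM2}.
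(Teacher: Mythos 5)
Your overall architecture coincides with the paper's: the theorem is reduced, via the involution $\al\mapsto\g_E-\al$ and Lemma \ref{minimideldelta}, to showing that for $\al\in A_i$ the set $\Delta^S(\g_E-\al)$ meets no level below $N-i+1$ (the paper's Lemma \ref{dualitylemma1}, proved by decreasing induction using Lemma \ref{tuttedirezioni}) and that its minimal elements lie exactly in $A_{N-i+1}$ (the paper's Proposition \ref{dualityprop2}, proved by increasing induction via Proposition \ref{livelloinferiore}). Up to this point your plan and the paper agree.

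The gap is in your inductive step for the second statement. You propose to work with the indices $k\in F$ (where $\be\in\Delta^S_F(\be_0)$ and $\be_0\in A_{i-1}$), for which indeed $\Delta^S_k(\be_0')\subseteq\Delta^S_k(\be')$; but this containment cannot produce the required drop of one level. If the minimal element of $\Delta^S_k(\be_0')$, which sits in $A_{N-i+2}$ by induction, happened to be minimal in $\Delta^S_k(\be')$ as well, your construction would place a minimal element of $\Delta^S(\be')$ in $A_{N-i+2}$ rather than $A_{N-i+1}$; and Theorem \ref{neri}, which you invoke for the "move down", only ever preserves levels. The mechanism that actually works is the opposite one: one needs an index $k\in\wh F$ with $\Delta^S_k(\g_E-\be_0)\neq\emptyset$, because a minimal element $\de$ of that set dominates $\g_E-\be$ in every coordinate, so a minimal $\om\in\Delta^S(\g_E-\be)$ satisfies $\om\le\de$, and Theorem \ref{bianchi} then forces the level of $\om$ strictly below that of $\de$ (its hypothesis $\widetilde{\Delta}^S_{\wh H}(\om)\subseteq A$ holds precisely because $\de\gg\g_E-\be$ keeps the distinguished coordinate of $\om$ out of $H$). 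The existence of such a $k\in\wh F$ is not automatic: it requires first choosing $\be_0$ so that $\widetilde{\Delta}^E_{\wh F}(\be_0)=\emptyset$ (via Proposition \ref{propG2}, Theorem \ref{neri} and Lemma \ref{neroinmezzo}) and then invoking Lemma \ref{spazivuotiduali}, the long technical lemma that carries most of the weight of this section. Your plan treats the indices $k\notin F$ as a bookkeeping nuisance rather than as the engine of the proof, and neither identifies the need for Lemma \ref{spazivuotiduali} nor supplies a substitute; without it the induction does not close.
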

 
 \begin{proof}
 This proof can be done with the same exact method used for $d=2$ in \cite[Theorem 9]{DGM}, after proving the results in Lemma \ref{dualitylemma1} and Proposition \ref{dualityprop2}. Such results generalizes \cite[Lemma 4 and 5]{DGM} to the case of arbitrary $d \geq 2$.
 \end{proof}
 
 Hence we dedicate the remaining part of this section to prove Lemma \ref{dualitylemma1} and Proposition \ref{dualityprop2}. First we need some preliminary technical result.

 \begin{lemma}
 \label{tuttedirezioni}
 Let $S \subseteq \mathbb{N}^d$ be a good semigroup and let $A= \bigcup_{i=1}^N A_i$ be the complement of a good ideal $E \subsetneq S$. Let $\al \in A_i$ for $i < N$. For every $k= 1, \ldots, d$ there exists $\be^{(k)} \in A_{i+1}$ such that $\be^{(k)} \geq \al$ and $\beta^{(k)}_k > \alpha_k$.
 \end{lemma}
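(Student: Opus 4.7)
The plan is to produce $\bs{\beta^{(k)}} \in A_{i+1}$ above $\al$ with $\beta^{(k)}_k > \alpha_k$ by combining \cite[Lemma 2.3]{GMM}, which already guarantees some $\te \in A_{i+1}$ with $\te \geq \al$, with a careful application of property (G2) of $S$ via Proposition \ref{propG2}, and then controlling the level of the resulting companion element using Theorems \ref{bianchi} and \ref{neri}. First, fix $k \in I$ and take $\te \in A_{i+1}$ with $\te \geq \al$; if $\theta_k > \alpha_k$, set $\bs{\beta^{(k)}} := \te$ and we are done. Otherwise $\theta_k = \alpha_k$, so $\te \in \ds{F}{\al}$ for some $F \subsetneq I$ with $k \in F$.

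By choosing $\te$ minimal in $A_{i+1} \cap \{\boldsymbol{\xi} \geq \al\}$, one reduces to the case in which $\te$ is consecutive to $\al$ in $S$: any $\boldsymbol{\xi} \in S$ strictly between $\al$ and $\te$ lies in $A_i \cup E$, by the minimality of $\te$ and \cite[Lemma 2.3]{GMM} (which forces $\boldsymbol{\xi} \in A \Rightarrow \boldsymbol{\xi} \in A_i \cup A_{i+1}$); the $A_i$-subcase is handled by an induction on a well-founded parameter, and the possibility of an $S$-chain $\al = \boldsymbol{\xi}_0 < \boldsymbol{\xi}_1 < \cdots < \boldsymbol{\xi}_m = \te$ whose intermediate terms all lie in $E$ is excluded by applying (G2) to the consecutive pair $(\al, \boldsymbol{\xi}_1)$, noting that $k$ is among the shared coordinates, and invoking Lemma \ref{minimidelta}.\ref{osssem0} to reach a contradiction. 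Once $\te$ is consecutive to $\al$ and $\te \in A_{i+1} \neq A_i$, the contrapositive of Theorem \ref{neri} gives $\tdE{\wh{F}}{\al} = \emptyset$, hence $\td{\wh{F}}{\al} \subseteq A$. Applying Proposition \ref{propG2} in its $S$-version (valid by the same argument, replacing $E$ with $S$ and using (G2) of $S$) to the pair $(\al, \te)$ yields $\eps^{(1)}, \ldots, \eps^{(r)} \in S$, each consecutive to $\al$, with $\al = \te \wt \eps^{(1)} \wt \cdots \wt \eps^{(r)}$, $\eps^{(l)} \in \ds{G_l}{\al}$, $G_l \supseteq \wh{F}$, and $\bigcap_l G_l = \wh{F}$. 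Since $k \not\in \wh{F}$, some index $l_0$ satisfies $k \not\in G_{l_0}$, so $\epsilon^{(l_0)}_k > \alpha_k$; moreover $\eps^{(l_0)} \in \ds{G_{l_0}}{\al} \subseteq \td{\wh{F}}{\al} \subseteq A$.

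The final step is to control the level of $\eps^{(l_0)}$. Apply Theorem \ref{bianchi}.(1) with $\de = \al$, $G = F$, the theorem's $\te$ played by our $\te \in \ds{F}{\al} \cap A_{i+1}$, and $\be = \eps^{(l_0)}$: the hypotheses $\ds{\wh{F}}{\al} \subseteq A$, $\eps^{(l_0)}$ consecutive to $\al$ with $G_{l_0} \supseteq \wh{F}$, and $\td{G_{l_0}}{\al} \subseteq \td{\wh{F}}{\al} \subseteq A$ are all satisfied, so $\eps^{(l_0)} \in A_j$ with $j \leq i+1$; together with $\eps^{(l_0)} \geq \al$ and \cite[Lemma 2.3]{GMM} this forces $j \in \{i, i+1\}$. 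If $j = i+1$, set $\bs{\beta^{(k)}} := \eps^{(l_0)}$. If $j = i$, one further application of \cite[Lemma 2.3]{GMM} to $\eps^{(l_0)} \in A_i$ produces $\be \in A_{i+1}$ with $\be \geq \eps^{(l_0)} \geq \al$ and $\beta_k \geq \epsilon^{(l_0)}_k > \alpha_k$, so $\bs{\beta^{(k)}} := \be$ works. The main obstacle is the reduction carried out in the second paragraph, namely arranging $\te$ to be consecutive to $\al$ in $S$, and in particular ruling out the scenario where every element of $S$ strictly between $\al$ and $\te$ lies in $E$.
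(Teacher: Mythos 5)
You have put your finger on the right difficulty yourself, but the step you use to resolve it is false. You claim that the configuration ``$\te\in A_{i+1}$ minimal above $\al$ with $\theta_k=\alpha_k$, and every element of $S$ strictly between $\al$ and $\te$ lying in $E$'' cannot occur, deriving a contradiction from (G2) applied to the consecutive pair $(\al,\boldsymbol{\xi}_1)$ together with Lemma \ref{minimidelta}.\ref{osssem0}. That lemma only yields $\widetilde{\Delta}^E_{\wh{H}}(\al)=\emptyset$, hence $\widetilde{\Delta}^S_{\wh{H}}(\al)\subseteq A$; this is additional information, not a contradiction. Indeed the configuration does occur: in the semigroup of Figure \ref{Figura 3} (with $E=\e+S$, $\e=(4,3)$) take $\al=(4,5)\in A_3$ and $k=1$; then $\te=(4,10)$ is a minimal element of $A_4$ above $\al$ with $\theta_1=\alpha_1$, and the intermediate elements $(4,6),(4,8),(4,9)$ of $S$ all lie in $E$. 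The lemma still holds there (e.g.\ $(7,6)\in A_4$ works), but your reduction to a consecutive $\te$ breaks down exactly at the point you flag as the main obstacle. The tail of your argument (contrapositive of Theorem \ref{neri}, Proposition \ref{propG2}, Theorem \ref{bianchi}) is sound once consecutiveness is available, and the bad case is in fact repairable: run Proposition \ref{propG2} on the pair $(\al,\boldsymbol{\xi}_1)$ itself, since $\boldsymbol{\xi}_1\in\Delta^E_H(\al)$ with $k\in H\supseteq F$ forces $\widetilde{\Delta}^E_{\wh{H}}(\al)=\emptyset$ and so produces an element of $A$, consecutive to $\al$, whose $k$-th coordinate exceeds $\alpha_k$ and whose level is $i$ or $i+1$. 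But that is not the argument you wrote.

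For comparison, the paper's proof takes a different and much shorter route that avoids the consecutiveness issue entirely. It invokes the structural dichotomy of \cite[Lemma 2.3]{GMM}: either some $\be\in A_{i+1}$ dominates $\al$ (which settles all $k$ at once), or $\al$ is a complete infimum of elements of $A_i\cup A_{i+1}$ with at least one of them in $A_{i+1}$. Since the sets $F_j$ occurring in a complete infimum have empty intersection, some component $\te$ of that infimum satisfies $\theta_k>\alpha_k$; if $\te\in A_{i+1}$ one is done, and if $\te\in A_i$ one applies the same dichotomy once more to $\te$ (it cannot be dominated by an element of $A_{i+1}$, since such an element would also dominate $\al$) and picks a component lying in $A_{i+1}$.
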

 
 \begin{proof}
 If there exists $\be \in A_{i+1}$ such that $\be \gg \al$ we are done. Hence, suppose this is not the case. 
 It follows that $\al$ must be a complete infimum of elements in $A$ that are either in $A_i$ or in $A_{i+1}$, but not all in $A_i$. By definition of complete infimum we can find some element $\te \in A_i \cup A_{i+1}$ such that $\te \geq \al$ and $\theta_k > \alpha_k$. If $\te \in A_{i+1}$ we are done. Assume then $\te \in A_i$. Now, if $\te$ was dominated by some element of $A_{i+1}$, the same element would dominate also $\al$. Thus also $\te$ is a complete infimum of elements in $A_i \cup A_{i+1}$. Since at least one of such elements is in $A_{i+1}$, we conclude by choosing that element.
 \end{proof}


 \begin{lemma}
 \label{spazivuotiduali} Let $S$ be a good semigroup and let $A= \bigcup_{i=1}^N A_i$ be the complement of a good ideal $E \subseteq S$. Suppose that $A$ is a symmetric complement, $\be \in A$ and $\widetilde{\Delta}^E_{G}(\be) = \emptyset$. Then there exists $k \in G$ such that $\Delta^S_k(\g_E -\be) \neq \emptyset$.
 \end{lemma}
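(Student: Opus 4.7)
The proof is by contradiction: assume $\Delta^S_k(\g_E - \be) = \emptyset$ for every $k \in G$. The main input is the dual reading of the symmetric complement property, $\te \in E \iff \Delta^S(\g_E - \te) = \emptyset$; combined with the hypothesis $\widetilde{\Delta}^E_G(\be) = \emptyset$, this forces $\Delta^S(\g_E - \te) \neq \emptyset$ for every $\te \in \N^d$ with $\te_i = \beta_i$ on $G$, $\te_j \geq \beta_j$ on $\widehat{G}$, and $\te \neq \be$. Taking $\te$ with the coordinates in $\widehat{G}$ very large, $(\g_E - \te)_j$ becomes very negative for $j \in \widehat{G}$, so a non-empty $\Delta^S_k(\g_E - \te)$ can occur only for $k \in G$. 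This produces some $k \in G$ and an element $\al^{*} \in S$ satisfying $\al^{*}_k = \gamma_{E,k} - \beta_k$ and $\al^{*}_l > \gamma_{E,l} - \beta_l$ for every $l \in G \setminus \{k\}$; I denote the set of such elements by $T_k$.

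Under the contradiction assumption $T_k \cap \Delta^S_k(\g_E - \be) = \emptyset$, so every $\al \in T_k$ must fail $\al_l > \gamma_{E,l} - \beta_l$ on some $l \in \widehat{G}$. I set $M^k_l := \sup\{\al_l : \al \in T_k\}$ and rely on the fact that $T_k$ is stable under $\wedge$ (via (G1) and the form of its defining conditions), which makes the sup attained as soon as it is finite. If some $M^k_l = \infty$, an element of $T_k$ with $\al_l > \gamma_{E,l} - \beta_l$ can be combined via (G2) with analogous elements pushing the remaining $\widehat{G}$-coordinates into the strict regime, eventually yielding an element of $T_k \cap \Delta^S_k(\g_E - \be)$, a direct contradiction. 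Otherwise I fix an arbitrary $l^{*} \in \widehat{G}$, replace $k$ if needed so that $M^k_{l^{*}}$ is maximal over the $k \in G$ with $T_k \neq \emptyset$, and pick $\al^{*} \in T_k$ realizing $\al^{*}_{l^{*}} = M^k_{l^{*}}$.

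The crux is then to apply the hypothesis to the test element $\te$ with $\te_{l^{*}} = \gamma_{E,l^{*}} - M^k_{l^{*}} \geq \beta_{l^{*}}$, $\te_i = \beta_i$ on $G$, and $\te_{l'}$ very large for $l' \in \widehat{G} \setminus \{l^{*}\}$. Then $\te \neq \be$ (the borderline $M^k_{l^{*}} = \gamma_{E,l^{*}} - \beta_{l^{*}}$ is handled by a direct (G2) application to $\al^{*}$ and an element of $\Delta^S_{l^{*}}(\g_E - \be)$), so $\te \notin E$ and $\Delta^S(\g_E - \te) \neq \emptyset$, with viable directions only in $G \cup \{l^{*}\}$. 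If a produced element lies in $\Delta^S_{l^{*}}(\g_E - \te)$, it shares its $l^{*}$-coordinate with $\al^{*}$, and applying (G2) to the two yields an element of $T_k$ whose $l^{*}$-coordinate strictly exceeds $M^k_{l^{*}}$, contradicting the maximality. If instead it lies in $\Delta^S_{k'}(\g_E - \te)$ for some $k' \in G$, that element sits in $T_{k'}$ with $l^{*}$-coordinate exceeding $M^k_{l^{*}}$, which contradicts either the definition of $M^k_{l^{*}}$ (if $k' = k$) or the maximality of $M^k_{l^{*}}$ over the choices of $k$ (if $k' \neq k$). The main anticipated obstacle is the combinatorial bookkeeping when $|\widehat{G}| \geq 2$: unlike the $d = 2$ situation treated in \cite{DGM}, several coordinates of $\widehat{G}$ interact, and the $M^k_l = \infty$ case in particular may require an induction on $|\widehat{G}|$ to ensure that all coordinates are simultaneously pushed into the strict regime without undoing the gains made at earlier stages.
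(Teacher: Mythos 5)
Your reorganization is in the same spirit as the paper's argument (which instead fixes the directions $j\in\wh{G}$ where $\Delta^S_j(\g_E-\be)\neq\emptyset$, slides down the half-lines below $\g_E-\be$ in those directions to the last points $\de^{(j)}$ where $\Delta^S_j$ survives, and derives a contradiction from the structure around $\bigwedge_j\de^{(j)}$), but as written it has two genuine gaps, both precisely at the places where several coordinates of $\wh{G}$ interact. The first is the case $M^k_l=\infty$. Your $T_k$ consists of elements pinned at $x_k=\gamma_{E,k}-\beta_k$. Property (G2) applied to two elements of $T_k$ at the shared coordinate $k$ produces an element whose $k$-th coordinate is \emph{strictly larger}, hence outside $T_k$; applied at some other shared coordinate it only gives $\epsilon_k\geq\gamma_{E,k}-\beta_k$ with equality not forced; and (G1) takes minima, which can only decrease the $\wh{G}$-coordinates. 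So no mechanism you invoke turns several elements of $T_k$, each large in one coordinate of $\wh{G}$, into a single element of $T_k$ simultaneously large in all of them; this is exactly the content the paper's Claim~1 and its auxiliary elements $\boldsymbol{\tau}^{(k)}$ are built to supply, and it does not come for free from (G1)/(G2).

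The second gap is that your ``crux'' test element need not be admissible. For $\te$ to fall under the hypothesis you need $\te\in\widetilde{\Delta}_{G}(\be)$, i.e.\ $\te_{l^{*}}=\gamma_{E,l^{*}}-M^k_{l^{*}}\geq\beta_{l^{*}}$, equivalently $M^k_{l^{*}}\leq\gamma_{E,l^{*}}-\beta_{l^{*}}$. You assert this but never prove it, and for $|\wh{G}|\geq 2$ it can fail: the contradiction hypothesis only says each $\al\in T_k$ drops to or below $\g_E-\be$ in \emph{some} coordinate of $\wh{G}$, not in the particular $l^{*}$ you fixed, so $T_k$ may contain elements with $\al_{l^{*}}>\gamma_{E,l^{*}}-\beta_{l^{*}}$ that fail elsewhere; maximizing over $k$ does not rule this out, and the intermediate case $\gamma_{E,l^{*}}-\beta_{l^{*}}<M^k_{l^{*}}<\infty$ is simply not covered by your case split. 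The local pieces that are correct (the (G2) bump of $\al^{*}$ against a witness in $\Delta^S_{l^{*}}(\g_E-\te)$, and the switch to $T_{k'}$) only close the argument once these two points are settled --- your own closing sentence concedes the multi-coordinate bookkeeping is unresolved, and that concession is exactly where the proposal stops being a proof. To repair it you would need something like an induction on subsets $L\subseteq\wh{G}$ of coordinates already forced into the strict regime, which is in effect what the paper's three-claim structure accomplishes on the dual side.
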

 
 \begin{proof}
 Set $\be':= \g_E - \be$.
  For every $\de \in \mathbb{Z}^d$ such that $\be' \in \widetilde{\Delta}_{G}(\de)$, then $\de'= \g_E - \de \in \widetilde{\Delta}_{G}(\be)$. Hence, by definition of symmetric complement,
 $\Delta^S(\de) \neq \emptyset$ for any of such $\de$. 
 Moreover, since $\be \in A$, then $\Delta^S(\be') \neq \emptyset$. Hence the set $U= \{ j \in I : \Delta_j^S(\be') \neq \emptyset \}$ is non-empty. To get the thesis, we have to prove that $U \cap G \neq \emptyset$. By way of contradiction, suppose $U \subseteq \wh G$. 
 
 For each $j$, consider the half-line 
 $T_j:= \{ \al \in \mathbb{Z}^d : \be' \in \Delta^S_{ I \setminus \{ j\} }(\al)\}$. 
 For $j \in U$ we notice that, since $G \subseteq I \setminus \{ j\}$, each element $\de \in T_j$ is such that $\Delta^S(\de) \neq \emptyset$. 
 By construction, the $j$-th coordinate of the elements of $T_j$ can be arbitrarily small (possibly negative), therefore there exists a maximal element $\de^{(j)} \in T_j$ such that $\Delta^S_j(\de^{(j)}) = \emptyset$. 
 Set $\de:= \bigwedge_{j \in U} \de^{(j)}$. It can be easily seen that $\be' \in \Delta_{\wh U}(\de)$. Thus, $\Delta^S(\de) \neq \emptyset$, since $\wh U \supseteq G$. (see Figure \ref{Figure 2} for a graphical representation in case $d=3$)
 \begin{figure}[H]
 \centering
 \includegraphics[scale=0.9]{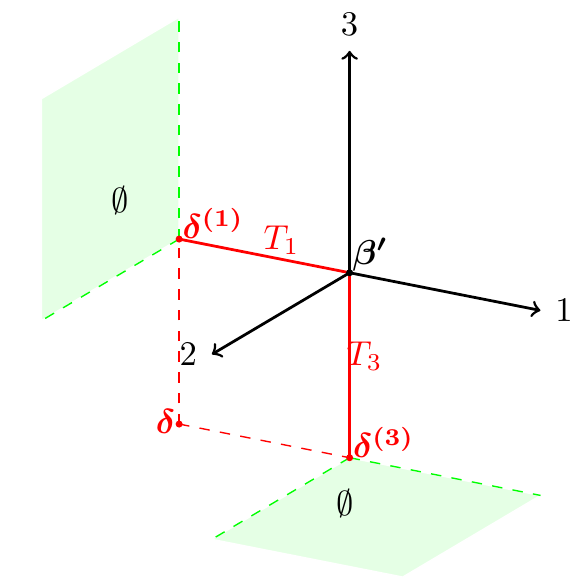}
 \caption{\footnotesize A graphical representation of costruction explained above. In this picture: $d=3$, $G=\{2\}, U=\{1,3\}$}
 \label{Figure 2}
 \end{figure}
 We divide now the proof in three parts, proving the following claims: \begin{enumerate}
     \item $\Delta^S_j(\de)= \emptyset$ for every $j \in U$. 
     \item Given $\te\in \Delta_{\wh U}(\de)$ such that $\te \leq \be'$, then $\Delta^S_{ \{ k \} \cup H}(\te)= \emptyset$ for every $k \not \in U$, $H \subseteq U$ (including $H = \emptyset$, but excluding the case $\{ k \} \cup H = I$).
     \item Condition $1.$ and $2.$ together yield a contradiction.
 \end{enumerate}
  Let us prove $1.$ If $|U|=1$, then $\de = \de^{(j)}$ and the claim follows by definition of $\de^{(j)}$. Otherwise, assume by way of contradiction there exists $\om \in \Delta^S_j(\de)$. 
  Then $\omega_j = \delta_j = \delta^{(j)}_j < \beta'_j$ and $\omega_l > \delta_l = \delta^{(j)}_l = \beta'_l$ for $l \not \in U$. 
  Also we can suppose $\om$ to be minimal in $\Delta^S_j(\de)$ with respect to the coordinates in $\wh U$. 
  Since $\Delta^S_j(\de^{(j)})= \emptyset$, we must have $\omega_i \leq \delta^{(j)}_i = \beta'_i$ for some $i \in U$, $i \neq j$. 
  We want to find an element $\te \in S$, such that $\te > \om$, $\theta_i > \omega_i$, and $\theta_j = \omega_j$. Iterating this process we find eventually an element in $\Delta^S_j(\de^{(j)})$ and this is impossible. The element $\te$ is constructed using property (G2) as follows. 

  Notice that for each $k \in U \setminus \{ j \}$ 
  we can find $\boldsymbol{\tau}^{(k)}$ in the line $T_k \cup \{ \be' \}\cup \Delta_{I \setminus \{ k\}}(\be') $ such that $\tau^{(k)}_k = \omega_k $ and $\boldsymbol{\tau}^{(k)} > \de^{(k)}$.
  Define $\boldsymbol{\tau}:= \bigwedge_{k \in U \setminus \{ j \}}\boldsymbol{\tau}^{(k)}$.
  Observe that $\boldsymbol{\tau} \leq \boldsymbol{\tau}^{(i)} \leq \be'$ and $\tau_k = \min \{ \beta'_k, \omega_k \}$.
  Hence, $\delta^{(k)}_k < \tau_k \leq \beta'_k  $ for every $k \in U \setminus \{ j\}$.
  By definition of $\de^{(k)}$ and by the assumption that $\Delta^S_k(\be') \neq \emptyset$ for every $k \in U$, we obtain
  that for each $k \in U \setminus \{ j \}$, $\Delta_k^S(\boldsymbol{\tau}) \neq \emptyset$. Therefore by property (G1), $\Delta^S_{U \setminus \{ j \}}(\boldsymbol{\tau}) \neq \emptyset$.
  Pick $\al \in \Delta^S_{U \setminus \{ j \} }(\boldsymbol{\tau})$. Since $\tau_k \geq \omega_k$ for each $k \in U$ and $\tau_l = \beta'_l = \delta_l $ for each $l \not \in U$, we get that $\al \wedge \om \in \Delta^S_j(\de)$.
  The choice of the element $\om$ minimal with respect to the coordinates in $\wh U$ implies that $\al > \om $. 
  Thus $ \al \in \Delta^S_{V }(\om) $ with $V \supseteq U \setminus \{ j \}$, and $j \not \in V$ because $\tau_j = \beta'_j > \delta^{(j)}_j = \omega_j$.
  Define $\te$ by applying property (G2) to $\om$ and $\al$ in such a way to find $\te \in \Delta^S_H(\om)$, with $j \in \wh V \subseteq H $ and $i \not \in H$.

 To prove $2.$ observe that such condition is true for $\be'$
 by an iterated application of 
 Lemma \ref{minimidelta}.\ref{minimidelta6} choosing $S$ as good ideal of itself. Indeed, by the fact that $\Delta^S_{j}(\be') \neq \emptyset$ if and only if $j  \in U$, we get that $\Delta^S_{ \{ k,j \}}(\be')= \emptyset$ for every $k \not \in U$, $j \in U$. Iterating, we obtain also $\Delta^S_{ \{ k \} \cup H}(\be')= \emptyset$ for $k \not \in U$ and $H \subseteq U$. 
 Let now $\boldsymbol{\epsilon}_1, \ldots, \boldsymbol{\epsilon}_d$ be the elements of the canonical basis of $\mathbb{Z}^d$ as $\mathbb{Z}$-module. Starting from $\be'$, we proceed inductively assuming our claim true for $\te + \boldsymbol{\epsilon}_l$ with $l \in U$ and proving it for $\te$.
  We only need to show that $\Delta_k^S(\te) \neq \emptyset$ if and only if $k \in U$, and then apply again Lemma \ref{minimidelta}.\ref{minimidelta6} in the same way as done for $\be'$. In the case when $k \not \in U$, if there were some $\om \in \Delta^S_k(\te)$, we would have 
 $\om \in \Delta^S_{ \{k,l \} }(\te + \boldsymbol{\epsilon}_l) \cup \Delta^S_{ \{k \} }(\te + \boldsymbol{\epsilon}_l)$ which contradicts the inductive hypothesis.
 For $j \in U$, observe that
 by construction $\te= \bigwedge_{j \in U} \te^{(j)}$ for some $\te^{(j)}  \in T_j \cup \{ \be' \}$ such that $\te^{(j)}> \de^{(j)} $ and $\theta^{(j)}_j = \theta_j$. By definition of $\de^{(j)}$, $\Delta^S_j(\te^{(j)}) \neq \emptyset$, and therefore also $\Delta^S_j(\te) \neq \emptyset$. 
 
 Finally, we prove $3$. By $1.$, since $\Delta^S(\de) \neq \emptyset$, 
necessarily there must exist $k \not \in U$ such that $\Delta^S_k(\de) \neq \emptyset$. Pick $\al \in \Delta^S_k(\de)$. 
Let $\te \in \mathbb{Z}^d$ be defined such that $\theta_j=\beta'_j= \delta_j $ if $j \in \wh U$ and $\theta_j= \alpha_j \wedge \beta'_j$ if $j \in U$. By construction, $\te \in \Delta_{\wh U}(\de)$ and $\te \leq \be'$. Moreover, $\al \in \Delta^S_{\{ k \} \cup H}(\te)$ for some $H \subseteq U$. If $\{ k \} \cup H \neq I$ we find a contradiction by $2.$ Otherwise, we must have $H=U = I \setminus \{k \}$ and $\te = \al \in S$. But, as observed in the proof of $2.$, $\Delta^S_j(\te) \neq \emptyset$ for every $j \in U$, and by property (G1) (Lemma \ref{minimidelta}.\ref{minimidelta1}), $\Delta^S_U(\te) \neq \emptyset$. By property (G2), $\widetilde{\Delta}^S_{\wh U}(\te)= \widetilde{\Delta}^S_{k}(\te) \neq \emptyset$. Again this contradicts $2.$
 \end{proof}
 
 We are finally ready to prove Lemma \ref{dualitylemma1} and Proposition \ref{dualityprop2}.
 
 \begin{lemma}
 \label{dualitylemma1}
  Let $S$ be a good semigroup and let $A= \bigcup_{i=1}^N A_i$ be the complement of a good ideal $E \subseteq S$. Suppose $A$ to be a symmetric complement. Let $\al \in A_i$, then
  $$ \Delta^S(\g_E - \al) \cap A_j = \emptyset, \mbox{ for every } j< N-i+1.    $$
 \end{lemma}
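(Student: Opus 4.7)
The plan is to argue by induction on $i$. The base case $i=1$ is immediate: since $A$ is a symmetric complement we have $A_1=\{\boldsymbol{0}\}$, so $\g_E-\al=\g_E$, and by the remark recalled right after Definition \ref{deflivelli}, $A_N=\Delta^S(\g_E)$. Hence $\Delta^S(\g_E)\cap A_j = A_N\cap A_j=\emptyset$ for every $j<N$.

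For the inductive step, assume the lemma at level $i-1$ and fix $\al\in A_i$ together with $\te\in\Delta^S_k(\g_E-\al)$ for some $k\in I$. By the symmetric complement axiom, $\te\in A$, so $\te\in A_j$ for some $j$. The strategy is to exhibit $\al^{**}\in A_{i-1}$ with $\te\in\Delta^S(\g_E-\al^{**})$; the inductive hypothesis then gives $j\geq N-(i-1)+1=N-i+2>N-i+1$. First, I would invoke Proposition \ref{livelloinferiore} to get $\al^*\in A_{i-1}$ with $\al^*\leq\al$ and $\al^*\neq\al$, and set $F=\{l\in I : \alpha^*_l=\alpha_l\}\subsetneq I$. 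If $k\in F$, then $\alpha^*_k=\alpha_k$ while $\theta_l>\gamma_{E,l}-\alpha_l\geq\gamma_{E,l}-\alpha^*_l$ for every $l\neq k$, so $\te\in\Delta^S_k(\g_E-\al^*)$ and we are done.

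The hard case, and the main obstacle, is $k\notin F$: then $\alpha^*_k<\alpha_k$ forces $\theta_k<\gamma_{E,k}-\alpha^*_k$, so $\te$ falls out of $\Delta^S(\g_E-\al^*)$ and the direct choice $\al^{**}=\al^*$ does not work. To salvage this, one must construct a different predecessor $\al^{**}\in A_{i-1}$ whose $k$-th coordinate equals $\alpha_k$ rather than $\alpha^*_k$. This is precisely where the symmetric complement axiom enters: Lemma \ref{spazivuotiduali} provides the crucial bridge, translating the non-emptiness of $\Delta^S_k(\g_E-\al)$ into non-emptiness of certain sets $\widetilde{\Delta}^E_G(\al)$, which via property (G2) (cf.\ Proposition \ref{propG2}) yields other elements comparable to $\al$ in the required direction. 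I would combine these with Lemma \ref{tuttedirezioni} applied to $\al^*$ in direction $k$ to manufacture the candidate $\al^{**}$, using Theorems \ref{neri} and \ref{bianchi} together with Lemma \ref{neroinmezzo} to verify that it lies exactly in $A_{i-1}$ and not higher. The delicate level-tracking of $\al^{**}$, reconciling the structural information coming from the dual $\g_E-\al$ side with the level partition on the $\al$ side, is what I expect to be the main technical difficulty.
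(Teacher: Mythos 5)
Your base case is fine, but the inductive step has two problems, one computational and one structural, and together they sink the approach. The computational one: from $\al^*\le\al$ you get $\alpha^*_l\le\alpha_l$, hence $(\g_E-\al^*)_l\ge(\g_E-\al)_l$ --- your chain $\theta_l>\gamma_{E,l}-\alpha_l\geq\gamma_{E,l}-\alpha^*_l$ has the second inequality pointing the wrong way. Passing to a predecessor pushes the dual point $\g_E-\al^*$ \emph{outward}, so $\theta_l>(\g_E-\al)_l$ does not give $\theta_l>(\g_E-\al^*)_l$, and $\te$ need not lie in $\Delta^S(\g_E-\al^*)$ even in your ``easy'' case $k\in F$.

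The structural problem is worse: the object you want to build in the hard case does not exist in general. If some $\al^{**}\in A_{i-1}$ satisfied $\te\in\Delta^S(\g_E-\al^{**})$, your inductive hypothesis would force $\te\in A_j$ with $j\ge N-i+2$. But by Proposition \ref{dualityprop2} (equivalently, by the duality $A_i^{\prime}=A_{N-i+1}$ that this lemma is ultimately feeding into), the minimal elements of $\Delta^S(\g_E-\al)$ lie in $A_{N-i+1}$ exactly; for such $\te$ no admissible $\al^{**}$ can exist, so the construction you defer to Lemma \ref{spazivuotiduali} and property (G2) has nothing to produce. The paper's proof runs in the opposite direction precisely to avoid this: it is a \emph{decreasing} induction (base $i=N$, vacuous), it uses Lemma \ref{tuttedirezioni} to pick a successor $\be\in A_{i+1}$ with $\be>\al$ and $\beta_1>\alpha_1$ --- so that $\g_E-\be$ moves \emph{inward} and $\te\gg\g_E-\be$ --- and then takes a minimal $\de\in\Delta^S(\g_E-\be)$, shows $\de\le\te$, and applies Theorem \ref{bianchi} to place $\de$ in a level strictly below that of $\te$; the inductive hypothesis at level $i+1$ applied to $\de$ then yields exactly the bound $N-i+1$ for $\te$. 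If you want to salvage your write-up, reverse the induction and replace the predecessor $\al^{**}$ by such a successor $\be$.
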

 
 \begin{proof}
 We work by decreasing induction on $i$ starting by $i=N$. In the basis case there is nothing to prove. Assume the thesis to be true for the elements in the level $A_{i+1}$. 
 Pick $\te \in \Delta^S(\g_E - \al)$ and without loss of generality say that $\te \in \Delta^S_1(\g_E - \al)$. Since $A$ is a symmetric complement we can say that $\te \in A_h$ for some $h$. By Lemma \ref{tuttedirezioni}, we can find $\be \in A_{i+1}$ such that $\be > \al$ and $\beta_1 > \alpha_1$. To conclude, it suffices to prove that there exists some element in $\Delta^S(\g_E - \be) \cap A_t$ with $t < h$. Indeed, assuming by way of contradiction $h < N-i+1$, one would get $t < N-i$ contradicting the inductive hypothesis. Clearly $\g_E - \be < \g_E - \al $. 
 Let $\de$ be a minimal element in $\Delta^S(\g_E -\be)$ and suppose $\de \in \Delta^S_k(\g_E - \be)$. It is easy to see that $\te \gg \g_E - \be$. Hence $\de \wedge \te \in \Delta^S_k(\g_E -\be)$ and, by minimality of $\de$, we get $\de \leq \te$.
 If $\de \ll \te$, then $\de \in A_t$ with $t < h$. Otherwise, 
 $\te \in \Delta^S_H(\de)$ with $k \not \in H$. 
 Notice now that $$ \widetilde{\Delta}^S_{\wh H}(\de) \subseteq \Delta^S_k(\g_E - \be) \subseteq A.    $$
 By application of Theorem \ref{bianchi} to $\te$ and $\de$, we get that $\de$ is in a level strictly smaller than the level of $\te$.
 \end{proof} 
 
 \begin{prop}
 \label{dualityprop2}
 Let $S$ be a good semigroup and let $A= \bigcup_{i=1}^N A_i$ be the complement of a good ideal $E \subseteq S$. Suppose $A$ to be a symmetric complement. Let $\al \in A_i$, then the minimal elements of 
  $ \Delta^S(\g_E - \al)$ are in $A_{N-i+1}$.
 \end{prop}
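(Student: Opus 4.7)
The plan is to argue by decreasing induction on $i$. By Lemma~\ref{minimideldelta}, all minimal elements of $\Delta^S(\g_E-\al)$ lie in a common level $A_j$; Lemma~\ref{dualitylemma1} already gives $j\ge N-i+1$, so only the reverse inequality remains. For the base case $i=N$, pick $k$ with $\al\in\Delta_k(\g_E)$: then $(\g_E-\al)_k=0$ and the other coordinates of $\g_E-\al$ are strictly negative, which forces $\boldsymbol{0}$ to be the unique minimum of $\Delta^S(\g_E-\al)$, and $\boldsymbol{0}\in A_1=A_{N-N+1}$ since $A$ is a symmetric complement.

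For the inductive step, fix a minimal element $\te\in\Delta_l^S(\g_E-\al)$ with $\te\in A_j$. By Lemma~\ref{tuttedirezioni} choose $\be\in A_{i+1}$ with $\be\ge\al$ and $\beta_l>\alpha_l$; the inductive hypothesis applied to $\be$ then places every minimal element of $\Delta^S(\g_E-\be)$ into $A_{N-i}$. Let $\de\in\Delta_k^S(\g_E-\be)$ be such a minimal element. Repeating the computation of Lemma~\ref{dualitylemma1} (namely $\te\gg\g_E-\be$ together with $\de\wedge\te\in\Delta_k^S(\g_E-\be)$) gives $\de\le\te$ with $\de\in A_{N-i}$.

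To conclude, I would produce an element $\boldsymbol{\xi}\in\Delta^S(\g_E-\al)\cap A_{N-i+1}$; any minimal element of $\Delta^S(\g_E-\al)$ lying below such a $\boldsymbol{\xi}$ has level at most $N-i+1$, so Lemma~\ref{minimideldelta} combined with Lemma~\ref{dualitylemma1} forces $j=N-i+1$. The natural candidates come from Lemma~\ref{tuttedirezioni} applied to $\de\in A_{N-i}$, producing for each coordinate $m$ an element $\boldsymbol{\xi}^{(m)}\in A_{N-i+1}$ with $\boldsymbol{\xi}^{(m)}\ge\de$ and $\xi^{(m)}_m>\delta_m$. A case analysis on whether $k=l$ or $k\neq l$, controlled by Theorem~\ref{bianchi} to track the levels of the consecutive intermediate elements that arise between $\de$ and $\boldsymbol{\xi}^{(m)}$, then pinpoints an index $m$ (and a direction $l'$) for which $\boldsymbol{\xi}^{(m)}\in\Delta_{l'}^S(\g_E-\al)$; Lemma~\ref{spazivuotiduali} is used to convert the empty $\widetilde{\Delta}^E$-slices of $\al$ that the minimality of $\te$ imposes into the nonempty $\Delta^S$-slices of $\g_E-\al$ needed to make this selection possible.

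The main obstacle is precisely this last step. The direction constraints defining $\Delta^S(\g_E-\al)$ are not nested inside those of $\Delta^S(\g_E-\be)$, so an element above $\de$ does not automatically land in any $\Delta_{l'}^S(\g_E-\al)$-slice. Bridging this gap requires a careful interplay between the direction $k$ of $\de$, the direction $l$ of $\te$, and the empty-direction structure made available by Lemma~\ref{spazivuotiduali}; the case $k\neq l$ is the technically most delicate part, since in that regime the minimality of $\te$ provides no direct information in the $l$-slices of $\g_E-\be$.
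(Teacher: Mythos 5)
Your proposal is not complete: the decisive step --- actually producing an element $\boldsymbol{\xi}\in\Delta^S(\g_E-\al)\cap A_{N-i+1}$ --- is exactly the point you yourself flag as ``the main obstacle'', and the machinery you assemble around it does not close it. The difficulty is structural and comes from the direction of your induction. Working downwards from $i=N$, your inductive hypothesis lives on $\be\in A_{i+1}$ with $\be\ge\al$, so $\g_E-\be\le\g_E-\al$ and the known dual data (the minimal element $\de$ of $\Delta^S(\g_E-\be)$, sitting in $A_{N-i}$) lies \emph{below} the region you need to reach. To finish you must climb from $\de$ into a slice $\Delta^S_{l'}(\g_E-\al)$, i.e.\ hit an exact coordinate equality $\xi_{l'}=(\g_E-\al)_{l'}$, and Lemma \ref{tuttedirezioni} only supplies elements of $A_{N-i+1}$ above $\de$ with one prescribed strict coordinate increase --- nothing forces any of them into $\widetilde{\Delta}(\g_E-\al)$ at all. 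Moreover your intended use of Lemma \ref{spazivuotiduali} does not apply as stated: that lemma converts $\widetilde{\Delta}^E_{G}(\be)=\emptyset$ into $\Delta^S_k(\g_E-\be)\neq\emptyset$ for some $k\in G$, but the minimality of $\te$ in $\Delta^S(\g_E-\al)$ does not by itself hand you any empty $\widetilde{\Delta}^E$-slice to feed into it.

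The paper runs the induction the other way (increasing on $i$, base case $i=1$, where $\Delta^S(\g_E-\boldsymbol{0})=A_N$), precisely so that the known dual data lies \emph{above} the target. By Proposition \ref{livelloinferiore} one picks $\be\in A_{i-1}$ with $\be<\al$ and no other element of $A_{i-1}$ strictly in between; writing $\al\in\Delta^S_F(\be)$, this choice forces $\widetilde{\Delta}^E_{\wh F}(\be)=\emptyset$ (otherwise property (G2) together with Theorem \ref{neri} and Lemma \ref{neroinmezzo} would produce an intermediate element of $A_{i-1}$). Now Lemma \ref{spazivuotiduali} applies to $\be$ and yields $k\in\wh F$ with $\Delta^S_k(\g_E-\be)\neq\emptyset$; its minimal element $\de$ is in $A_{N-i+2}$ by the inductive hypothesis, and --- this is the point your route cannot reproduce --- the choice $k\in\wh F$ guarantees $\de\gg\g_E-\al$. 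The comparison argument from the end of Lemma \ref{dualitylemma1} then forces every minimal element of $\Delta^S(\g_E-\al)$ into a level strictly below that of $\de$, which together with the lower bound from Lemma \ref{dualitylemma1} and the single-level statement of Lemma \ref{minimideldelta} pins them to $A_{N-i+1}$. If you want to salvage your draft, reversing the induction and replacing Lemma \ref{tuttedirezioni} by Proposition \ref{livelloinferiore} is the way to do it.
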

 
 \begin{proof}
 We work by increasing induction on $i$. If $i=1$, since $A_1 = \{ \boldsymbol{0} \} $, we have $\Delta^S(\g_E - \boldsymbol{0}) = A_N$ and the thesis is true. Hence, we assume the thesis to be true for the elements in the level $A_{i-1}$ and we prove it for $A_i$. By Lemma \ref{dualitylemma1}, $\Delta^S(\g_E - \al) \cap A_j = \emptyset$ for every $j < N-i+1$.
 By Proposition \ref{livelloinferiore}, there exists $\be \in A_{i-1}$ such that $\be < \al$. We can also assume that there are not other elements of $A_{i-1}$ strictly between $\be$ and $\al$. Say that $\al \in \Delta^S_F(\be)$ with possibly $F = \emptyset$ (i.e. $\al \gg \be$). We claim then that $\widetilde{\Delta}^E_{\wh F}(\be) = \emptyset$.
 Indeed, if this were not the case, using property (G2) as in Proposition \ref{propG2}, we can express $\be$ as complete infimum of (consecutive) elements in some directions $\Delta^S_{H_1}(\be), \ldots, \Delta^S_{H_t}(\be)$ with $\bigcap_{l=1}^t H_l = \wh V$ for some $V \supseteq \wh F$. Hence, there exists $H = H_l$ such that $H \cup F \neq I$. 
 By Theorem \ref{neri}, we could find $\te \in A_{i-1} \cap \Delta^S_{H}(\be)$. 
 Hence, $\be < \te \wedge \al \leq \te$, and by Lemma \ref{neroinmezzo} $\te \wedge \al \in A_{i-1}$, a contradiction with the assumption on $\be$.
 
 Now, recall that by Lemma \ref{minimideldelta} all the minimal elements of  $\Delta^S(\g_E - \al)$ are in the same level, thus it is enough to determine the level of only one of them. 
 
 Let $\om$ by a minimal element of $\Delta^S(\g_E - \al)$. By Lemma \ref{spazivuotiduali}, since $\widetilde{\Delta}^E_{\wh F}(\be) = \emptyset$, then there exists $k \in \wh F$ such that $\Delta^S_k(\g_E -\be) \neq \emptyset$.
 Therefore we can find a minimal element $ \de \in \Delta^S_k(\g_E - \be)$. 
 By inductive hypothesis, $\de \in A_{N-i+2}$. We need to show that $\om$ is in a level strictly smaller than the level of $\de$. This will imply $\om \in A_{N-i+1}$ by Lemma \ref{dualitylemma1}. 
 By assumption on $\de$ we have that $\de \gg \g_E - \al$. This implies the result by the same exact argument used at the end of the proof of Lemma \ref{dualitylemma1}.
 \end{proof}

 \section{The Ap\'ery set of a non-local good semigroup in $\mathbb{N}^d$}
 \label{nonlocal}
 
 The partition in level of the complement of a good ideal described in \cite{GMM} and recalled here in Section 2 is perfectly well-defined also for non-local good semigroup. From \cite[Theorem 2.5]{a-u} every good semigroup can be expressed as a direct product of local good semigroups. The nice structure of non-local good semigroups as direct products allow us to give a more precise description of the levels of the partition in terms of the levels of partitions in the direct factors.
 We do this in Theorem \ref{thmnonlocallevels}.
 
 Our method consists of proving all the results for a direct product of two arbitrary good semigroups (not necessarily local). Everything proved in this setting can be extended by a finite number of iterations to any non-local good semigroup. 
 
 Our setting is the following:
Let $d_1, d_2 \geq 1$ and let $S_1 \subseteq \mathbb{N}^{d_1} $, $S_2 \subseteq \mathbb{N}^{d_2}$ be two good semigroups, not necessarily local. Consider the non-local good semigroup $S := S_1 \times S_2  \subseteq \mathbb{N}^d$ where $d= d_1+d_2$. Each element of $S$ is expressed in the form $ (\al^{(1)}, \al^{(2)})$ with $\al^{(1)} \in S_1$, $\al^{(2)} \in S_2$. We also define set of indexes $I_1:= \lbrace 1, \ldots, d_1 \rbrace $, $I_2:= \lbrace d_1+1, \ldots, d_1+d_2 \rbrace $.

Let $E_1$, $E_2$ be proper good ideals, respectively of $S_1$ and $S_2$. It is easy to check that $E := E_1 \times E_2 $ is a good ideal of $S$. If $A :=  S \setminus E$, $ A^{(1)} :=  S_1 \setminus E_1$, and $A^{(2)} :=  S_2 \setminus E_2$ then 
$$  A = (A^{(1)} \times S_2) \cup (S_1 \times A^{(2)}). $$


 We prove the following lemmas.

\begin{lemma}
\label{directprod1}
Pick $ (\al^{(1)}, \al^{(2)}) \in S$. Then $\al^{(2)} \in E_2$ if and only if there exists $\he^{(1)} > \al^{(1)}$ such that $(\he^{(1)}, \al^{(2)}) \in E$.
\end{lemma}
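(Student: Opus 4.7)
The plan is to exploit directly the product structure $E = E_1 \times E_2$ together with the fact that $E_1$, being a good ideal, possesses a conductor.

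For the reverse implication, assume there exists $\he^{(1)} > \al^{(1)}$ with $(\he^{(1)}, \al^{(2)}) \in E$. Since $E = E_1 \times E_2$, membership of the pair in $E$ immediately forces $\al^{(2)} \in E_2$. The hypothesis $\he^{(1)} > \al^{(1)}$ plays no role in this direction; only the fact that some such pair lies in $E$ is used.

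For the forward implication, assume $\al^{(2)} \in E_2$. The strategy is to manufacture explicitly an element $\he^{(1)} \in E_1$ with $\he^{(1)} > \al^{(1)}$, so that automatically $(\he^{(1)}, \al^{(2)}) \in E_1 \times E_2 = E$. Since $E_1$ is a good ideal of $S_1$, it has by definition a conductor $\boldsymbol{c}_{E_1} \in \N^{d_1}$ satisfying $\boldsymbol{c}_{E_1} + \N^{d_1} \subseteq E_1$. I would then set
\[
\he^{(1)} := \boldsymbol{c}_{E_1} + \al^{(1)} + \boldsymbol{1},
\]
where $\boldsymbol{1}$ denotes the all-ones vector in $\N^{d_1}$. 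Since $\al^{(1)} + \boldsymbol{1} \in \N^{d_1}$, we have $\he^{(1)} \in \boldsymbol{c}_{E_1} + \N^{d_1} \subseteq E_1$, while $\he^{(1)} - \al^{(1)} = \boldsymbol{c}_{E_1} + \boldsymbol{1} \geq \boldsymbol{1} > \boldsymbol{0}$ shows that $\he^{(1)}$ is componentwise strictly greater than $\al^{(1)}$, hence in particular $\he^{(1)} > \al^{(1)}$.

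No step is a real obstacle: the lemma is essentially a bookkeeping statement combining the product structure of $E$ with the existence of arbitrarily large elements in $E_1$. The interest of the statement lies not in its difficulty but in the reformulation it provides, namely that membership of $\al^{(2)}$ in $E_2$ can be detected purely by looking at whether some pair $(\he^{(1)}, \al^{(2)})$ with $\he^{(1)}$ large enough lies in $E$; this will be the convenient phrasing for the subsequent description of $A = S \setminus E$ and its levels in terms of the factors $S_1, S_2$.
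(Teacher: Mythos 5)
Your proof is correct and follows essentially the same route as the paper: both implications reduce to the product structure $E = E_1 \times E_2$, with the forward direction supplied by the existence of an element of $E_1$ strictly above $\al^{(1)}$ (which the paper asserts from $E_1$ being a good ideal, and which you make explicit via the conductor $\boldsymbol{c}_{E_1}$). Nothing is missing.
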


\begin{proof}
Since $E_1$ is a good ideal we can always find $\he^{(1)} \in E_1$ such that $\he^{(1)} > \al^{(1)}$.
Both implications follow now since $E= E_1 \times E_2$.
\end{proof}

\begin{lemma}
\label{directprod2}
Fix an element $\al^{(1)} \in A^{(1)}$. Let $\te^{(2)} \leq \al^{(2)}$ be two consecutive elements of $S_2$.
Then $ (\al^{(1)}, \te^{(2)}), (\al^{(1)},\al^{(2)}) \in A$ and they are consecutive in $S$. Suppose $(\al^{(1)}, \te^{(2)}) \in A_i$. Then $(\al^{(1)},\al^{(2)}) \in A_i$ if and only if there exists $\de^{(2)} \in E_2$ such that $\de^{(2)} \wedge \al^{(2)} = \te^{(2)}$ (in particular, also if $\de^{(2)} = \te^{(2)} \in E_2$). Otherwise $(\al^{(1)},\al^{(2)}) \in A_{i+1}$.
\end{lemma}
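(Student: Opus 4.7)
The plan is to reduce the claim to a direct application of Theorem \ref{neri} combined with the product structure of the ideal $E = E_1 \times E_2$.

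First I would check the preliminary assertions. Since $\al^{(1)} \in A^{(1)}$, both $(\al^{(1)}, \te^{(2)})$ and $(\al^{(1)}, \al^{(2)})$ lie in $A^{(1)} \times S_2 \subseteq A$. They are consecutive in $S$ since they share the first block $\al^{(1)}$ while $\te^{(2)} \leq \al^{(2)}$ are consecutive in $S_2$. Setting $F := \{ i \in I_2 : \theta^{(2)}_i = \alpha^{(2)}_i \} \subsetneq I_2$, we get $\al^{(2)} \in \ds{F}{\te^{(2)}}$ in $S_2$ (with $F = \emptyset$ in the dominating case), and therefore $(\al^{(1)}, \al^{(2)}) \in \ds{I_1 \cup F}{(\al^{(1)}, \te^{(2)})}$ in $S$, noting that $\wh{I_1 \cup F} = I_2 \setminus F$.

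Next I would apply Theorem \ref{neri} to $(\al^{(1)}, \te^{(2)}) \in A_i$ with $G := I_1 \cup F$, which yields $(\al^{(1)}, \al^{(2)}) \in A_i$ as soon as $\tdE{I_2 \setminus F}{(\al^{(1)}, \te^{(2)})} \neq \emptyset$. The main step is then to translate this nonemptiness into the stated condition on $\de^{(2)}$. An element of $\tdE{I_2 \setminus F}{(\al^{(1)}, \te^{(2)})}$ is of the form $(\he^{(1)}, \de^{(2)})$ with $\he^{(1)} \in E_1$, $\he^{(1)} \geq \al^{(1)}$, and $\de^{(2)} \in E_2$ satisfying $\delta^{(2)}_i = \theta^{(2)}_i$ for $i \in I_2 \setminus F$ and $\delta^{(2)}_i \geq \theta^{(2)}_i = \alpha^{(2)}_i$ for $i \in F$. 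Existence of such $\he^{(1)}$ is automatic from the conductor of $E_1$. A coordinate-wise check shows that the constraints on $\de^{(2)}$ are equivalent to $\de^{(2)} \wedge \al^{(2)} = \te^{(2)}$: on indices in $I_2 \setminus F$ we have $\alpha^{(2)}_i > \theta^{(2)}_i$, so the equality $\min(\delta^{(2)}_i, \alpha^{(2)}_i) = \theta^{(2)}_i$ forces $\delta^{(2)}_i = \theta^{(2)}_i$; on indices in $F$, where $\theta^{(2)}_i = \alpha^{(2)}_i$, the condition reads $\delta^{(2)}_i \geq \alpha^{(2)}_i$ and matches the constraint from the infimum. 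The special case $\te^{(2)} \in E_2$ is handled by taking $\de^{(2)} = \te^{(2)}$ directly.

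To handle the ``otherwise'' clause, I would invoke the general fact (\cite[Lemma~2.3(6)]{GMM}, recalled in the preliminaries) that two elements of $A$ consecutive in $S$ must lie either in the same level or in two adjacent levels, with the larger element in the higher level. Since the ``same level'' alternative has just been characterized, the failure of the $\de^{(2)}$-condition forces $(\al^{(1)}, \al^{(2)}) \in A_{i+1}$. The principal obstacle I anticipate is the coordinate bookkeeping needed to pass cleanly between the product description of $\tdE{I_2 \setminus F}{(\al^{(1)}, \te^{(2)})}$ and the infimum formulation; once that translation is made, Theorem \ref{neri} does the real work and the remainder is essentially automatic.
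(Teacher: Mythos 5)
Your setup, the reduction to the nonemptiness of $\tdE{I_2 \setminus F}{(\al^{(1)}, \te^{(2)})}$, and the coordinate-wise translation of that condition into $\de^{(2)} \wedge \al^{(2)} = \te^{(2)}$ are all correct and match the paper's strategy for the forward implication (the paper routes the same idea through Lemma \ref{minimidelta}.\ref{minimidelta1} and Lemma \ref{directprod1}, but the substance is identical: produce an element of $E$ in the relevant $\widetilde{\Delta}$ region and invoke Theorem \ref{neri}).

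The gap is in your handling of the ``otherwise'' clause. You write that ``the same level alternative has just been characterized,'' but at that point you have only established a \emph{sufficient} condition for the two elements to share a level: Theorem \ref{neri} says that $\tdE{I_2\setminus F}{(\al^{(1)},\te^{(2)})} \neq \emptyset$ forces $(\al^{(1)},\al^{(2)}) \in A_i$; it does not say that emptiness of this set forces the level to jump. The general fact that consecutive elements of $A$ lie in the same or in adjacent levels cannot decide between the two alternatives, so your argument for the converse is circular. What is needed is the second part of Theorem \ref{bianchi}: when no admissible $\de^{(2)}$ exists, your own translation shows $\widetilde{\Delta}^S_{I_2 \setminus F}(\al^{(1)},\te^{(2)}) \subseteq A$ (this is exactly $\Delta^S_{\wh G}$ and $\widetilde{\Delta}^S_{\wh G}$ for $G = I_1 \cup F$), and Theorem \ref{bianchi}(2) applied to $\de = (\al^{(1)},\te^{(2)})$ and $\te = \be = (\al^{(1)},\al^{(2)})$ then places $(\al^{(1)},\te^{(2)})$ in a level strictly below that of $(\al^{(1)},\al^{(2)})$, which combined with consecutiveness gives $(\al^{(1)},\al^{(2)}) \in A_{i+1}$. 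This is precisely the step the paper's proof supplies and yours omits; once it is inserted, your argument is complete.
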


\begin{proof}
Since $\al^{(1)} \in A^{(1)}$, then $(\al^{(1)}, \te^{(2)}), (\al^{(1)},\al^{(2)}) \in A$ and they are clearly consecutive since so are $\al^{(2)}$ and $\te^{(2)}$ in $S_2$. Hence $(\al^{(1)},\al^{(2)}) \in A_i \cup A_{i+1}$. Looking at the coordinates we can find a set of indexes $F \supseteq I_1$ such that $(\al^{(1)},\al^{(2)}) \in \Delta^S_F(\al^{(1)},\te^{(2)})$.

Assume there exists $\de^{(2)} \in E_2$ such that $\de^{(2)} \wedge \al^{(2)} = \te^{(2)}$. Thus, by 
Lemma \ref{minimideldelta}.1 there exists $G \supseteq I_1$ such that $(\al^{(1)}, \de^{(2)}) \in \Delta^S_G(\al^{(1)}, \te^{(2)}) \cup \{(\al^{(1)}, \te^{(2)})\}$ and $F \cup G = I$.
By Lemma \ref{directprod1}, there exists $\he^{(1)} > \al^{(1)}$ such that $(\he^{(1)}, \de^{(2)} ) \in E$. This is saying that $(\he^{(1)}, \de^{(2)}) \in \Delta^E_H(\al^{(1)}, \te^{(2)})$ with $H \nsupseteq I_1$ and $H \cap I_2 = G \cap I_2$. In particular, $F \supseteq \widehat{H}$. Since $(\he^{(1)}, \de^{(2)} ) \in E$, by 
Lemma \ref{minimidelta}.\ref{osssem0},
$\widetilde{\Delta}^S_{\widehat{H}}(\al^{(1)}, \te^{(2)}) \subseteq A$. By 
Theorem \ref{neri}
$(\al^{(1)}, \al^{(2)}) \in A_i$.

Conversely, suppose that no element $\de^{(2)} \in E_2$ satisfying such property exists. Let $G$ be a set of indexes such that $F \cup G = I$. 
If there were some element $(\he^{(1)},\he^{(2)}) \in \Delta^E_G(\al^{(1)}, \te^{(2)})$, then $\he^{(2)} \in \Delta^{E_2}_{G \cap I_2}(\te^{(2)})$ and this would be a contradiction (notice that $(F \cap I_2) \cup (G \cap I_2) = I_2$). Therefore $\widetilde{\Delta}^S_{\widehat{F}}(\al^{(1)}, \te^{(2)}) \subseteq A$. By 
Theorem \ref{bianchi}, the level of $(\al^{(1)}, \te^{(2)})$ is strictly smaller than the level of $(\al^{(1)}, \al^{(2)})$. This proves $(\al^{(1)}, \al^{(2)}) \in A_{i+1}$.
\end{proof}

We define now a level function for all the elements of an arbitrary good semigroup $T$ which extend the notion of level for the elements not in the set $A$.

\begin{definition}
\label{virtuallevel}
\rm
Let $T$ be a good semigroup and let $A= \bigcup_{i=1}^N A_i$ be 
the complement of a good ideal. If $T$ is numerical,
$A= \{w_1, \ldots, w_N \}$ is finite and we set $A_i= \{w_i\}$.
We define a \it level function \rm
$\lambda: T \to \{1, \ldots, N+1\}$ in the following way:
\begin{itemize}
\item If $\al \in A_i$, $\lambda(\al)= i$.
\item If $\al \not \in A$, $\lambda(\al)= 1+ \max\{i \mbox{ such that } \al > \te \mbox{ for some } \te \in A_i \}$.
\end{itemize}
\end{definition}

\begin{theorem}
\label{thmnonlocallevels}
Let $S= S_1 \times S_2 $, $E \subsetneq S$, and $A$ be defined as above. Then,
given $(\al^{(1)}, \al^{(2)}) \in A$, the level of $(\al^{(1)}, \al^{(2)})$ in $A$ is equal to
$$ \lambda(\al^{(1)}) + \lambda(\al^{(2)}) -1.  $$
\end{theorem}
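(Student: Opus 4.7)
The plan is to establish the formula by induction along a chain of consecutive elements in $S$, applying Lemma \ref{directprod2} and its symmetric analogue (obtained by interchanging the roles of $S_1$ and $S_2$) at each step. Since $(\al^{(1)}, \al^{(2)}) \in A$ forces at least one of $\al^{(1)} \in A^{(1)}$ or $\al^{(2)} \in A^{(2)}$, we may assume without loss of generality that $\al^{(1)} \in A^{(1)}$ and argue along a chain of consecutive elements in $S_2$ from $\boldsymbol{0}$ to $\al^{(2)}$ with first coordinate fixed at $\al^{(1)}$.

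The technical core is the following auxiliary claim, intrinsic to $S_2$: for consecutive $\te^{(2)} < \al^{(2)}$ in $S_2$, one has $\lambda(\al^{(2)}) - \lambda(\te^{(2)}) \in \{0, 1\}$, with the value $0$ occurring precisely when there exists $\de^{(2)} \in E_2$ such that $\de^{(2)} \wedge \al^{(2)} = \te^{(2)}$. Granted this, iterating Lemma \ref{directprod2} along a consecutive chain $\boldsymbol{0} = \be^{(2)}_0 < \cdots < \be^{(2)}_n = \al^{(2)}$ in $S_2$ and telescoping shows that the level of $(\al^{(1)}, \al^{(2)})$ in $A$ equals the level of $(\al^{(1)}, \boldsymbol{0})$ plus $\lambda(\al^{(2)}) - 1$. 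A symmetric iteration along a chain in $S_1$ (with the second coordinate fixed at $\boldsymbol{0} \in A^{(2)}$), together with the observation that $(\boldsymbol{0}, \boldsymbol{0})$ is the minimum of $S$ and so lies in $A_1$, identifies the level of $(\al^{(1)}, \boldsymbol{0})$ with $\lambda(\al^{(1)})$, yielding the desired formula.

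The auxiliary claim is proved by case analysis based on whether $\te^{(2)}, \al^{(2)}$ lie in $A^{(2)}$ or in $E_2$. Writing $\al^{(2)} \in \Delta^{S_2}_F(\te^{(2)})$, one notes that when $\te^{(2)} \in A^{(2)}$ the existence of $\de^{(2)}$ in the required form is equivalent to $\widetilde{\Delta}^{E_2}_{\widehat{F}}(\te^{(2)}) \neq \emptyset$, whereas if $\te^{(2)} \in E_2$ the choice $\de^{(2)} := \te^{(2)}$ already works. When both $\te^{(2)}, \al^{(2)} \in A^{(2)}$, Theorem \ref{neri} and Theorem \ref{bianchi} match this condition precisely with the jump behaviour between consecutive elements in $A^{(2)}$. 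When $\te^{(2)} \in A^{(2)}$ and $\al^{(2)} \in E_2$, Lemma \ref{minimidelta}.\ref{osssem0} applied in $E_2$ gives $\widetilde{\Delta}^{E_2}_{\widehat{F}}(\te^{(2)}) = \emptyset$, and Definition \ref{virtuallevel} forces $\lambda$ to jump by exactly $1$. When both $\te^{(2)}, \al^{(2)} \in E_2$, consecutiveness in $S_2$ implies that no element of $A^{(2)}$ lies strictly between them, so the two virtual levels coincide.

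The delicate remaining case, and the main obstacle, is $\te^{(2)} \in E_2$ with $\al^{(2)} \in A^{(2)}$. Here $\de^{(2)} := \te^{(2)}$ trivially witnesses the condition, so one must independently verify $\lambda(\al^{(2)}) = \lambda(\te^{(2)}) = 1 + M$, where $M = \max\{j : \exists \, \be \in A^{(2)}_j \text{ with } \be < \te^{(2)}\}$. The bound $\lambda(\al^{(2)}) \geq M$ is immediate from monotonicity of the level structure, applied to an $A^{(2)}_M$-witness below $\te^{(2)}$. For the sharp equality, supposing $\lambda(\al^{(2)}) = j^{*} > M + 1$, one invokes Proposition \ref{livelloinferiore} to produce $\be' \in A^{(2)}_{j^{*}-1}$ with $\be' < \al^{(2)}$; consecutiveness of $\te^{(2)}, \al^{(2)}$ in $S_2$ forbids $\be' \geq \te^{(2)}$, while if $\be'$ is incomparable to $\te^{(2)}$ the meet $\be' \wedge \te^{(2)}$, combined with property (G2) and Lemma \ref{neroinmezzo}, yields a level-$(j^{*}-1)$ element strictly below $\te^{(2)}$, contradicting the definition of $M$.
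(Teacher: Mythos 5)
Your overall strategy is the paper's: reduce to consecutive pairs, apply Lemma \ref{directprod2} at each step, and match its infimum condition with the behaviour of the level function via Theorems \ref{bianchi} and \ref{neri}. The difference is organizational: you telescope through every consecutive pair of $S_2$ and therefore need your auxiliary claim for all four membership patterns of $(\te^{(2)},\al^{(2)})$, whereas the paper, for a given $\al^{(2)}$, compares directly against an element $\te^{(2)}\in A^{(2)}$ below $\al^{(2)}$ chosen with $\lambda(\te^{(2)})$ \emph{maximal among all elements of $A^{(2)}$ smaller than $\al^{(2)}$}. With that choice, Definition \ref{virtuallevel} gives $\lambda=\lambda(\te^{(2)})+1$ for every $E_2$-element of the connecting chain for free, and the only consecutive-pair analysis ever needed is the one with lower endpoint in $A^{(2)}$, which is exactly your first case.

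The gap is in the cases of your auxiliary claim where an endpoint lies in $E_2$. Take the delicate case $\te^{(2)}\in E_2$, $\al^{(2)}\in A^{(2)}_{j^*}$: you must show $j^*=M+1$ with $M=\max\{j : \exists\, \be\in A^{(2)}_j,\ \be<\te^{(2)}\}$, and the whole difficulty is ruling out an element $\be'\in A^{(2)}_{j^*-1}$ (supplied by Proposition \ref{livelloinferiore}) that is below $\al^{(2)}$ but incomparable to $\te^{(2)}$. Your proposed fix --- pass to $\be'\wedge\te^{(2)}$ and use property (G2) and Lemma \ref{neroinmezzo} to produce a level-$(j^*-1)$ element strictly below $\te^{(2)}$ --- does not work as stated: the infimum of two elements of $A$ can lie in a much lower level (e.g.\ $\boldsymbol{0}$ is the infimum of high-level elements), and Lemma \ref{neroinmezzo} only controls elements sandwiched between two elements of the \emph{same} level, which is not the configuration at hand. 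The same incomparability issue is present, unacknowledged, in your cases $\te^{(2)}\in A^{(2)},\ \al^{(2)}\in E_2$ and both in $E_2$, where you assert the jump of $\lambda$ ``by definition'': the maximum in Definition \ref{virtuallevel} ranges over all elements of $A^{(2)}$ below $\al^{(2)}$, including those incomparable to $\te^{(2)}$, and consecutiveness alone does not bound their levels. The paper's maximal-level choice of $\te^{(2)}$ is precisely the device that absorbs these incomparable elements into the maximum; to salvage your telescoping you would need to prove separately that passing from $\te^{(2)}$ to a consecutive successor cannot introduce, among the elements of $A^{(2)}$ lying below it, a new one of strictly higher level.
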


\begin{proof}
It is always true that $\boldsymbol{0}=(\boldsymbol{0}_{S_1}, \boldsymbol{0}_{S_2}) \in A_1$. Clearly $1 = \lambda(\boldsymbol{0}_{S_1}) + \lambda(\boldsymbol{0}_{S_1}) -1$ and the theorem is true for this element. We can thus work by iterating the following procedure: we assume the result true for an element of $A$ and  we prove it for another element consecutive to it in $A$. 
Observe that any two consecutive elements in $A$ are either of the form $(\al^{(1)}, \al^{(2)})$, $ (\al^{(1)}, \te^{(2)})$ with $\al^{(1)} \in A^{(1)}$ or $(\al^{(1)}, \al^{(2)})$, $ (\he^{(1)}, \al^{(2)})$ with $\al^{(2)} \in A^{(2)}$. Such elements are also consecutive in $S$.

Fix $\al^{(1)} \in A^{(1)}$ and consider $\al^{(2)} \in S_2$, $ \al^{(2)} \neq \boldsymbol{0}_{S_2}$. Let $\te^{(2)} \in A^{(2)}$ be a maximal element such that $\te^{(2)} < \al^{(2)}$ and $\lambda(\te^{(2)})$ is also maximal among the elements of $A^{(2)}$ smaller than $\al^{(2)}$. Pick a chain of consecutive elements of $S_2$ $$ \te^{(2)} \leq \te^{(2)}_1  \leq \cdots \leq \te^{(2)}_c \leq \al^{(2)} $$ and consider the corresponding chain of consecutive elements of $S$ 
$$ (\al^{(1)}, \te^{(2)}) \leq (\al^{(1)},\te^{(2)}_1)  \leq \cdots \leq (\al^{(1)},\te^{(2)}_c) \leq (\al^{(1)},\al^{(2)}). $$
By our choice of $\te^{(2)}$, we have $\te^{(2)}_1, \ldots, \te^{(2)}_c \in E_2$. An iterated application of Lemma \ref{directprod2} implies that $(\al^{(1)}, \al^{(2)})$ is in the same level of $(\al^{(1)}, \te^{(2)}_1)$ and we can therefore restrict to assume that $\te^{(2)}$ and $\al^{(2)}$ are consecutive.

Say that $(\al^{(1)}, \te^{(2)}) \in A_i$. Hence $(\al^{(1)}, \al^{(2)}) \in A_i \cup A_{i+1}$. 
By inductive hypothesis,  $i = \lambda(\al^{(1)}) + \lambda(\te^{(2)}) -1. $ We know that $\lambda(\al^{(2)}) \in \{ \lambda(\te^{(2)}), \lambda(\te^{(2)})+1\}$.
To conclude it is sufficient to prove that $\lambda(\al^{(2)}) = \lambda(\te^{(2)})$ if and only if $(\al^{(1)}, \al^{(2)}) \in A_i$. We apply Lemma \ref{directprod2}. Consider first the case $\al^{(2)} \in E_2$. By definition of $\lambda$ and by construction of $\te^{(2)}$, in this case $\lambda(\al^{(2)})= 1 + \lambda(\te^{(2)})$.
If there exists $\de^{(2)} \in E_2$ such that $\de^{(2)} \wedge \al^{(2)} = \te^{(2)}$, we would get $\te^{(2)} \in E_2$ that is a contradiction. This implies $(\al^{(1)}, \al^{(2)}) \in A_{i+1}$ by Lemma \ref{directprod2}. 

The other case to consider is when $\al^{(2)} \in A^{(2)}$. If $\al^{(2)} \gg \te^{(2)}$, then $\lambda(\al^{(2)})= 1 + \lambda(\te^{(2)})$ and clearly there cannot exists any element $\de^{(2)} \neq \te^{(2)} $ such that $\de^{(2)} \wedge \al^{(2)} = \te^{(2)}$. Since $\te^{(2)} \not \in E_2$, this again shows $(\al^{(1)}, \al^{(2)}) \in A_{i+1}$.
Otherwise $\al^{(2)} \in \Delta^{S_2}_{F}(\te^{(2)})$ for some non-empty set of indexes $F \supsetneq I_2$.
Now the existence of $\de^{(2)} \in E_2$ such that $\de^{(2)} \wedge \al^{(2)} = \te^{(2)}$ (which by Lemma \ref{directprod2} is equivalent to have $(\al^{(1)}, \al^{(2)}) \in A_{i}$) is equivalent to have $\widetilde{\Delta}^{E_2}_{\widehat{F}}( \te^{(2)}) \neq \emptyset$. The application of 
Theorems \ref{bianchi} and \ref{neri} shows that this is equivalent to have $\lambda(\al^{(2)})= \lambda(\te^{(2)})$ and conclude the proof.
\end{proof}



As application we obtain a nice description of the levels of the Ap\'ery set of a non-local good semigroup in $\mathbb{N}^2$, see also Figure \ref{Figura 3}. A similar description can be obtained also in the case of good semigroup in $\mathbb{N}^d$ which splits completely as direct product of $d$ numerical semigroups.

\begin{cor}
\label{aperylocale}
Let $S = S_1 \times S_2 \subseteq \N^2$ be a non-local good semigroup and let $\om=(w_1, w_2) \in S$. Write $\Ap(S_1,w_1)=\lbrace u_{1}, u_{2}, \ldots, u_{w_1}  \rbrace$ and $\Ap(S_2,w_2)=\lbrace v_{1}, v_{2}, \ldots, v_{w_2}  \rbrace$ with the elements listed in increasing order. Set formally $u_{w_1+1}=v_{w_2+1}=\infty$. Then:
\begin{itemize}
    \item The level $A_1$ of $\Ap(S, \om)$ only consists of the element $(0,0)$.
    \item For $1 \leq i \leq w_1$, $1 \leq j \leq w_2$, the level $A_{i+j}$ of $\Ap(S, \om)$ is equal to the set 
$$ \lbrace (u_i, b) : v_{j} < b \leq v_{j+1} \rbrace \cup  \lbrace (a, v_j) : u_{i} < a \leq u_{i+1} \rbrace. $$
\end{itemize}
\end{cor}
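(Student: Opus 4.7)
The plan is to specialize Theorem \ref{thmnonlocallevels} to this setting. Since $S=S_1\times S_2 \subseteq \N^2$ is non-local, both factors are necessarily local good semigroups in $\N$, i.e.\ numerical semigroups, and the good ideal $\om+S$ factors as $E_1\times E_2$ with $E_k = w_k+S_k$. Consequently
\[
A=\Ap(S,\om) = (A^{(1)}\times S_2)\cup(S_1\times A^{(2)}),
\]
where $A^{(k)}=\Ap(S_k,w_k)$. Theorem \ref{thmnonlocallevels} then reduces the level computation of any $(\al,\be)\in A$ to $\lambda(\al)+\lambda(\be)-1$, where $\lambda$ is the level function of Definition \ref{virtuallevel} on each factor.

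The key preliminary step is to evaluate $\lambda$ explicitly on each numerical factor. Because every level of the Ap\'ery set of a numerical semigroup is a singleton listed in increasing order, one has $\lambda(u_i)=i$ and $\lambda(v_j)=j$. For an element $a\in E_1$, Definition \ref{virtuallevel} gives $\lambda(a)=1+\max\{k : u_k < a\}$. Combining the two cases, for every $a\in S_1$ with $u_i < a \leq u_{i+1}$ (using the convention $u_{w_1+1}=\infty$) one obtains $\lambda(a)=i+1$, and analogously $\lambda(b)=j+1$ for $b\in S_2$ with $v_j < b \leq v_{j+1}$.

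Assembling these facts, for $(u_i,b)\in A$ with $v_j < b \leq v_{j+1}$ the level is $\lambda(u_i)+\lambda(b)-1 = i+(j+1)-1=i+j$, and symmetrically $(a,v_j)\in A$ with $u_i < a \leq u_{i+1}$ sits at level $(i+1)+j-1=i+j$. This proves that each of the two described cells is contained in $A_{i+j}$. Conversely, every $(\al,\be)\in A\setminus\{(0,0)\}$ satisfies $\al\in A^{(1)}$ or $\be\in A^{(2)}$, and falls into a uniquely determined strip in the remaining coordinate, placing it in one of the specified cells for some $(i,j)$. Finally $(0,0)=(u_1,v_1)$ has level $\lambda(0)+\lambda(0)-1=1$, giving $A_1=\{(0,0)\}$.

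The only technical care needed is at the boundary, where a point of the form $(u_i,v_j)$ with $i,j\geq 2$ sits simultaneously in the cell indexed by $(i-1,j)$ and in the cell indexed by $(i,j-1)$; both consistently assign it to $A_{i+j-1}$. Thus the cells indexed by distinct $(i,j)$ with the same sum are not pairwise disjoint, and the corollary should be read as saying that $A_k$ equals the union of the two-strip cells over all $(i,j)$ with $i+j=k$.
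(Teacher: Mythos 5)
Your proof is correct and follows exactly the route the paper intends: Corollary \ref{aperylocale} is stated as an immediate specialization of Theorem \ref{thmnonlocallevels}, and your explicit evaluation of the level function $\lambda$ of Definition \ref{virtuallevel} on the two numerical factors ($\lambda(u_i)=i$, and $\lambda(a)=i+1$ for $a\in S_1$ with $u_i<a\le u_{i+1}$) is precisely the intended computation. Your reading of the statement as giving $A_k$ as the union of the two-strip cells over all $(i,j)$ with $i+j=k$ (with the overlap at points $(u_i,v_j)$ consistently assigned) is also the correct one, as the example in Figure \ref{Figura 3} confirms.
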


\begin{figure}[H] 
  \centering
  \begin{tikzpicture}[scale=1.1]
	\begin{axis}[grid=major, xtick={0,4,7,11,14,18,21,24}, ytick={0,3,5,8,10,13}, yticklabel style={font=\tiny}, xticklabel style={font=\tiny}]
\addplot[only marks] coordinates{ (4,3) (4,6) (4,8) (4,9) (4,11) (4,12) (4,13) (8,3) (8,6) (8,8) (8,9) (8,11) (8,12) (8,13) (11,3) (11,6) (11,8) (11,9) (11,11) (11,12) (11,13) (12,3) (12,6) (12,8) (12,9) (12,11) (12,12) (12,13)
(15,3) (15,6) (15,8) (15,9) (15,11) (15,12) (15,13) (16,3) (16,6) (16,8) (16,9) (16,11) (16,12) (16,13)
(18,3) (18,6) (18,8) (18,9) (18,11) (18,12) (18,13) (19,3) (19,6) (19,8) (19,9) (19,11) (19,12) (19,13)
(20,3) (20,6) (20,8) (20,9) (20,11) (20,12) (20,13) (22,3) (22,6) (22,8) (22,9) (22,11) (22,12) (22,13) (23,3) (23,6) (23,8) (23,9) (23,11) (23,12) (23,13) (24,3) (24,6) (24,8) (24,9) (24,11) (24,12) (24,13)}; 
 \addplot[only marks, mark=text, mark options={scale=2,text mark={ \scriptsize 1}}, text mark as node=true] coordinates{(0,0)}; 
 \addplot[only marks, mark=text, mark options={scale=2,text mark={ \scriptsize 2}}, text mark as node=true] coordinates{ (0,3) (0,5) (4,0) (7,0)}; 
 \addplot[only marks, mark=text, mark options={scale=2,text mark={ \scriptsize 3}}, text mark as node=true] coordinates{ (0,6) (0,8) (0,9) (0,10) (4,5) (7,3) (7,5) (8,0) (11,0) (12,0) (14,0) }; 
 \addplot[only marks, mark=text, mark options={scale=2,text mark={ \scriptsize 4}}, text mark as node=true] coordinates{ (0,11) (0,12) (0,13) (15,0) (16,0) (18,0) (19,0) (20,0) (21,0) (14,3) (14,5) (8,5) (11,5) (12,5) (4,10) (7,10) (7,6) (7,8) (7,9)}; 
 \addplot[only marks, mark=text, mark options={scale=2,text mark={ \scriptsize 5}}, text mark as node=true] coordinates{ (22,0) (23,0) (24,0) (21,3) (21,5) (15,5) (16,5) (18,5) (19,5) (20,5) (7,11) (7,12) (7,13) (14,10) (8,10) (11,10) (12,10) (14,10) (14,6) (14,8) (14,9)} ; 
 \addplot[only marks, mark=text, mark options={scale=2,text mark={ \scriptsize 6}}, text mark as node=true] coordinates{ (14,11) (14,12) (14,13) (21,10) (21,6)  (21,8) (21,9) (22,5) (23,5) (24,5) (21,10) (15,10) (16,10) (18,10) (19,10) (20,10)}; 
 \addplot[only marks, mark=text, mark options={scale=1,text mark={ \scriptsize 7}}, text mark as node=true] coordinates{ (22,10) (23,10) (24,10) (21,11) (21,12) (21,13) }; 
 \end{axis}
  \end{tikzpicture}
  \caption{\footnotesize The Ap\'ery set of the non-local good semigroup $S = S_1 \times S_2$ with respect to the element $\e= (4,3)$. Here, $S_1= \langle 4,7 \rangle$, $S_2= \langle 3,5 \rangle $, and $\Ap(S_1)= \{ 0,7,14,21 \}$, $\Ap(S_2)= \{ 0,5,10 \}$. The levels are indicated by distinct numbers. Black marks indicate the elements of $\e + S$.
   }
\label{Figura 3}
\end{figure}
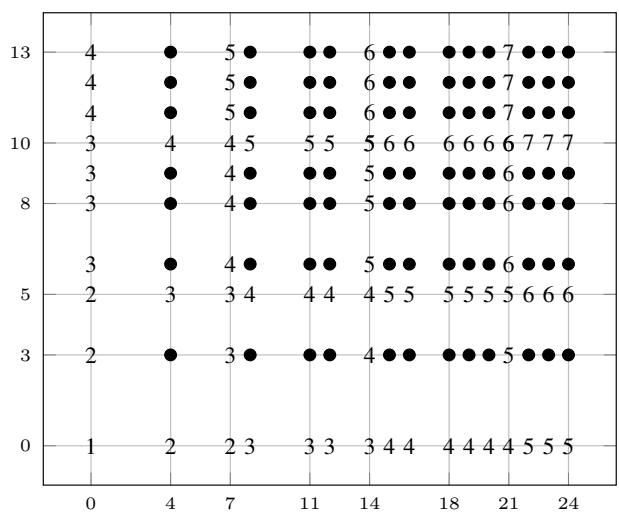

 \section{Well-behaved Ap\'ery sets}
 
 
 In this section we consider a particularly nice class of complement of good ideals which includes the Ap\'ery sets of local value semigroups of plane curves. Value semigroups of plane curves with more then one branch have been considered in \cite{garcia}, \cite{c-d-gz}, \cite{waldi}, and their Ap\'ery set has been defined in the case of two branches in \cite{apery}. The definition of Ap\'ery set given in \cite{apery} is slightly different from that one given for general good semigroups in \cite{DGM} and \cite{GMM}, but they intuitively seem to agree on value semigroups of plane curves. We give an explicit proof that they coincide in Proposition \ref{aperysetcoincide}. 
 
 Let $S$ be the local value semigroup of a plane curve and let $A$ be the Ap\'ery set of $S$ with respect to some nonzero element $\om$. Then $A$ can be partitioned as $\bigcup_{i=1}^N A_i$ as defined in \cite{GMM} and recalled in Section 2 of this paper, or as $\bigcup_{i=1}^M B_i$ following the definition in \cite[Section 3]{apery}.
 
 Such definition can be summarized as follows. The set $B_M$ consists of all the maximal elements of $A$ with respect to the order relation $\ll$. The set $B_j$ for $j < M$ is defined inductively as the set of all maximal elements of $A \setminus (\bigcup_{i=j+1}^M B_i)$ with respect to the order relation $\ll$. In \cite{apery} is proved that $M$ is equal to the sum of the components of $\om$. Hence, by \cite[Theorem 4.4]{GMM}, $N=M$.

 Observing the specific case where $d=2$, we see that the main reason for which the two partitions of the Ap\'ery set coincide is due to the fact that, if $S$ is the value semigroup of a plane curve, then all the complete infimums of elements in the Ap\'ery set of $S$ are not in the Ap\'ery set (see Figure \ref{Figura 4} for a graphical interpretation of this property).
 
 \begin{figure}[H]
 \centering
 \tikzset{mark size=2}\begin{tikzpicture}[scale=1.1]
 \begin{axis}[grid=major,  ytick={0,3,5,9,14,19,24}, xtick={0,2,3,6,9,12,16}, yticklabel style={font=\tiny}, 
 xticklabel style={font=\tiny}]
 \addplot[only marks] coordinates{(2,3)(4,6)(5,8)(6,9)(7,11)(8,12)(8,13)(9,12)(9,14)(10,12)(10,15)(10,16)(11,12)(11,15)(11,17)(11,18)(11,19)(11,20)(11,21)(11,22)(11,23)(11,24)(12,12)(12,15)(12,17)(12,18)(12,19)(13,12)(13,15)(13,17)(13,18)(13,20)(13,21)(13,22)(13,23)(13,24)(14,12)(14,15)(14,17)(14,18)(14,20)(14,21)(14,22)(14,23)(14,24)(15,12)(15,15)(15,17)(15,18)(15,20)(15,21)(15,22)(15,23)(15,24)(16,12)(16,15)(16,17)(16,18)(16,20)(16,21)(16,22)(16,23)(16,24)};
 \addplot[only marks, mark=text, mark options={scale=2,text mark={ \scriptsize 1}}, text mark as node=true] coordinates{(0,0)};
 \addplot[only marks, mark=text, mark options={scale=2,text mark={ \scriptsize 2}}, text mark as node=true] coordinates{(3,5)};
 \addplot[only marks, mark=text, mark options={scale=2,text mark={ \scriptsize 3}}, text mark as node=true] coordinates{(6,10)(7,9)(8,9)(9,9)(10,9)(11,9)(12,9)(13,9)(14,9)(15,9)(16,9)};
 \addplot[only marks, mark=text, mark options={scale=2,text mark={ \scriptsize 4}}, text mark as node=true] coordinates{(9,15)(9,16)(9,17)(9,18)(9,19)(9,20)(9,21)(9,22)(9,23)(9,24)(10,14)(11,14)(12,14)(13,14)(14,14)(15,14)(16,14)};
 \addplot[only marks, mark=text, mark options={scale=2,text mark={ \scriptsize 5}}, text mark as node=true] coordinates{(12,20)(12,21)(12,22)(12,23)(12,24)(13,19)(14,19)(15,19)(16,19)};\end{axis}\end{tikzpicture}
 \caption{\footnotesize The Ap\'ery set of the good semigroup $S$ associated to the plane curve $\K[[X,Y]]/((X^3-Y^2)\cdot(X^5-Y^3))\cong \K[[(t^2,u^3),(t^3,u^5)]]$ with respect to the element $\e= (2,3)$. Here, $S \subseteq S_1 \times S_2$ with
 $S_1= \langle 2,3 \rangle$, $S_2= \langle 3,5 \rangle $, and $\Ap(S_1)= \{ 0,3 \}$, $\Ap(S_2)= \{0,5,10\}$. The levels are indicated by distinct numbers. Black marks indicate the elements of $\e + S$.}
 \label{Figura 4}
 \end{figure}
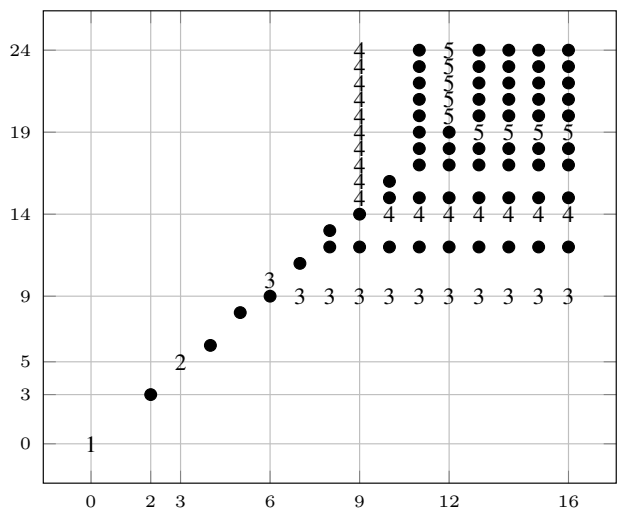

 This fact may be proved with valuation theoretic arguments following the notations and the method developed in \cite{apery}. In this paper, we rather prefer to give a proof using an approach based only on the combinatorics of good semigroups. 
 To do this, 
  we need to recall some key facts. 
  
  In \cite[Lemma 3.3]{apery} it is proved that if $\al \in B_i$ (and $i < M$), then there always exists $\be \in B_{i+1}$ such that $\be \gg \al$. 
  
  In \cite[Proposition 3.10]{apery} it is proved that if $\al \in B_i$, then $\Delta^S(\g + \om - \al) \subseteq B_{M-i+1}$.
  
  All the results in \cite{apery} are proved in the case $d=2$, but the authors mention explicitly in the introduction of the paper that each result until Theorem 4.1 (in particular all those in Section 3) can be proved with the same identical arguments for any number of branches $d$.
 Therefore, we can state and prove next proposition for arbitrary $d$. Recall that value semigroups of plane curves are symmetric, since rings of plane curves are always Gorenstein.

 \begin{prop}
\label{aperysetcoincide}
Let $S$ be the local value semigroup associated to a plane curve with $d$ branches. Write $\Ap(S)= \bigcup_{i=1}^N A_i = \bigcup_{i=1}^N B_i$, where $A_i$ and $B_i$ are the partitions defined respectively in \cite{GMM} and \cite{apery}. Then $A_i=B_i$ for every $i= 1,\ldots, N$. 
\end{prop}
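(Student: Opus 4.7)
My plan is to reduce the statement to a single combinatorial claim and then establish that claim using the symmetry of $S$ together with \cite[Proposition 3.10]{apery}. Both partitions $\bigcup_i A_i$ and $\bigcup_i B_i$ are obtained by the same iterative procedure---at each stage one extracts the elements of $A$ that are maximal with respect to $\ll$ in what remains---with the sole difference that the $A$-construction additionally postpones any such maximal element that happens to be a complete infimum of other elements of the current maximal set (the set $C^{(i)}$). It therefore suffices to prove the following key claim: for the value semigroup of a plane curve, no element of $A$ can be written as a complete infimum of other elements of $A$. Once this claim is established, a straightforward decreasing induction on $i$ shows that $B^{(N-i+1)} = B_i$ and $C^{(N-i+1)} = \emptyset$ at every stage of the construction, whence $A_i = D^{(N-i+1)} = B_i$.

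For the key claim I would argue by contradiction. Assume $\al = \bei{1} \wt \cdots \wt \bei{r}$ is a complete infimum with $\bei{k} \in \ds{F_k}{\al}$, $\bigcap_k F_k = \emptyset$, and every $\bei{k}$ as well as $\al$ lying in $A$. Since value semigroups of plane curves are Gorenstein, $S$ is symmetric, and hence $\al \in E$ is equivalent to $\Delta^S(\g_E - \al) = \emptyset$; the goal is to establish this emptiness, contradicting $\al \in A$. Pick $\te \in \Delta^S_j(\g_E - \al)$ for some $j$, and using $\bigcap_k F_k = \emptyset$ choose an index $k$ with $j \notin F_k$; the defining properties of a complete infimum then immediately give $\te \gg \g_E - \bei{k}$.

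Next, \cite[Proposition 3.10]{apery} confines the (by symmetry nonempty) set $\Delta^S(\g_E - \bei{k})$ to a single level $B_{M - i_k + 1}$ of the $B$-partition, where $\bei{k} \in B_{i_k}$, and the same proposition places $\te$ in a single level as well. For a suitably chosen minimal element $\he \in \Delta^S(\g_E - \bei{k})$, I would then form the infimum $\te \wedge \he \in S$ and perform a case analysis on which coordinate of $\he$ matches $\g_E - \bei{k}$: in each case, either $\te \wedge \he$ equals $\g_E - \al$ exactly, forcing $\g_E - \al \in S$ and hence $\al \in E$ by a second application of symmetry, or $\te$ and $\he$ turn out to be strictly comparable with respect to $\ll$---but this contradicts the fact that any two elements of $B_{M - i_k + 1}$ must be $\ll$-incomparable, since each is $\ll$-maximal in $A \setminus \bigcup_{j > M - i_k + 1} B_j$.

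The main obstacle I foresee is producing the auxiliary elements $\he$ uniformly for arbitrary $d$: when a naive candidate does not exist in some direction, it must be produced using property (G2) from the minimal elements of $\Delta^S(\g_E - \bei{k})$ that do exist, and the construction must be iterated across the indices $k$ whose sets $F_k$ intersect non-trivially. In the case $d = 2$ the analysis reduces to a short case distinction on which of $\Delta^S_1$ or $\Delta^S_2$ contains $\te$ and $\he$, and the dichotomy described above immediately yields the contradiction in every case.
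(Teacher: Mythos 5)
Your reduction rests on the claim that no element of $A=\Ap(S)$ can be written as a complete infimum of other elements of $A$, and that claim is false. It is strictly stronger than the well-behaved property (Definition~\ref{wellbehaved}) that plane-curve Ap\'ery sets actually enjoy, because it drops the requirement that the \emph{whole} sets $\widetilde{\Delta}^S_{G_j}(\al)$ be contained in $A$ and asks only that each of them contain one element of $A$. The semigroup of Figure~\ref{Figura 4} (the curve $(X^3-Y^2)(X^5-Y^3)$, $\e=(2,3)$) already gives a counterexample: $(9,9)=(10,9)\wt(9,15)$ is a complete infimum with $(9,9),(10,9)\in A_3$ and $(9,15)\in A_4$, all three lying in $\Ap(S)$. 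Here $\Delta^S_1\bigl((9,9)\bigr)$ also contains $(9,12)\in\e+S$, which is why well-behavedness is not contradicted. Your proposed route to the contradiction collapses on the same example: $\g_E=(12,19)$, so $\g_E-(9,9)=(3,10)$ and $\Delta^S\bigl((3,10)\bigr)\ni(6,10)\neq\emptyset$, so the emptiness you aim to establish simply does not hold.

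What the proposition actually needs is the stage-relative statement that no $\ll$-maximal element of $A\setminus\bigcup_{j<i}D^{(j)}$ is a complete infimum of \emph{other elements of that same maximal set}: in the example, $(9,15)$ has already been removed by the time $(9,9)$ is examined, which is exactly why $C^{(i)}=\emptyset$ there. Proving this requires controlling which elements survive to stage $i$, and the paper does so only indirectly: first a decreasing induction on $i$ showing $A_i\cap B_j=\emptyset$ for all $j<i$, driven by the domination property of \cite[Lemma 3.3]{apery}; then, at a hypothetical first index where the two partitions differ, a clash between the two dualities --- \cite[Proposition 3.10]{apery} for the $B$-levels and Proposition~\ref{dualityprop2} for the $A$-levels --- which places a minimal element of $\Delta^S(\g+\e-\al)$ in $B_{N-i+1}\cap A_{N-i+2}$, contradicting the first part. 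Your sketch invokes \cite[Proposition 3.10]{apery} but neither the preliminary induction nor the paper's own duality for the $A$-partition, and without both ingredients the final contradiction is not available.
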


\begin{proof}
First we show by decreasing induction on $i=N, \ldots, 1$ that $A_i \cap B_j = \emptyset$ for every $j < i$. By the definitions it is clear that $A_N=B_N= \Delta(\g + \om)$ (notice that since $S$ is local and symmetric, then $\g + \om \in \om + S$). Hence for every $j < N$, we get $A_N\cap  B_j= B_N \cap B_j = \emptyset$. Suppose now the result true for every $h > i$ and say that by way of contradiction there exists an element $\al \in A_i \cap B_j$ with $j < i$. By \cite[Lemma 3.3]{apery}, we can find $\be \in B_i$ such that $\be \gg \al$. Hence $\be \in A_h$ for some $h > i$. 
But by inductive hypothesis $A_h \cap B_i = \emptyset$, and this is a contradiction.

After setting this fact, assume to have 
$A_i \neq B_i$ for some $i$ and $A_j = B_j$ for every $j>i$. By considering the maximal elements with respect to $\ll$, after removing the levels $A_N, \ldots, A_{i+1}$, we get that necessarily there must exist 
one element $\al \in A_{i-1} \cap B_i$ which is a complete infimum of elements in $A_i \cap B_i$.  Let now $\te$ be a minimal element in $\Delta^S(\g + \om - \al)$. By \cite[Proposition 3.10]{apery}, $\te \in B_{N-i+1}$, while by Proposition \ref{dualityprop2}, $\te \in A_{N-i+2}$. This contradicts the fact we proved in the first paragraph of this proof.
\end{proof}

 The following definition of well-behaved set aims to describe in a more general setting, and for an arbitrary number of branches, this specific behavior of the Ap\'ery sets of plane curves with respect to infimums.
 Through this section we describe properties of these well-behaved sets. An application to plane curves is discussed in the next section.

\begin{definition}
 \label{wellbehaved} \rm Let $S \subseteq \mathbb{N}^d$ be a good semigroup and let $A$ be the complement of a good ideal $E \subseteq S$. We say that $A$ is \it well-behaved \rm if whenever $\al = \be^{(1)} \wt \cdots \wt \be^{(r)}$ with $\be^{(j)} \in \widetilde{\Delta}^S_{G_j}(\al) \subseteq A$ for every $j$, then $\al \in E$.
 \end{definition}
 
 If $d=2$, this corresponds to say that whenever $\Delta^S(\al) \subseteq A$ and it is non-empty, then $\al \not \in A$. 

 
 
 
 The Ap\'ery sets of value semigroups of plane curves are well-behaved as consequence of next proposition. Again we use the the fact proved in \cite[Lemma 3.3]{apery} and recalled previously.

 \begin{prop}
 \label{curvepianesonowellbehaved}
 Let $S$ be a good semigroup and let $A= \bigcup_{i=1}^N A_i$ be the complement of a good ideal $E \subseteq S$. Suppose that for every $i= 1, \ldots, N-1$ and for every $\al \in A_i$, there exists $\be \in A_{i+1}$ such that $\be \gg \al$. Then $A$ is well-behaved.
 \end{prop}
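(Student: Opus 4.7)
My plan is to argue by contradiction via decreasing induction on the level $i$ with $\al \in A_i$. Suppose $\al \in A$ and $\al = \be^{(1)} \wt \cdots \wt \be^{(r)}$ is a complete infimum with $\widetilde{\Delta}^S_{F_j}(\al) \subseteq A$ for every $j$. A key preliminary observation is that the pairwise identity $\al = \be^{(j)} \wedge \be^{(k)}$ in Definition~\ref{completeinfimum} forces $F_j \cup F_k = I$ for every $j \neq k$, and combined with $\bigcap_j F_j = \emptyset$ this shows that the complements $\widehat{F_j}$ partition $I$ into $r$ non-empty parts.

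For the base case $i = N$: I use that $A_N = \Delta^S(\g_E)$, so $\al \in \Delta^S_l(\g_E)$ for some index $l$, yielding $\alpha_l = \gamma_{E,l}$ and $\alpha_k > \gamma_{E,k}$ (hence $\alpha_k \geq c_{E,k}$) for $k \neq l$. By the partition, there is a unique $j_0$ with $l \in \widehat{F_{j_0}}$, so $l \notin F_{j_0}$. Then $\be^{(j_0)} \in \Delta^S_{F_{j_0}}(\al)$ satisfies $\be^{(j_0)}_l > \alpha_l = \gamma_{E,l}$, forcing $\be^{(j_0)}_l \geq c_{E,l}$; together with $\be^{(j_0)}_k \geq \alpha_k \geq c_{E,k}$ for $k \neq l$, this gives $\be^{(j_0)} \in c_E + \N^d \subseteq E$, contradicting $\be^{(j_0)} \in A$.

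For the inductive step $i < N$: I apply the hypothesis to pick $\be \in A_{i+1}$ with $\be \gg \al$ and define $\widetilde{\be}^{(j)} := \be \wedge \be^{(j)}$. A direct coordinate check (using $\beta_k > \alpha_k$ for every $k$) shows that $\widetilde{\be}^{(j)} \in \Delta^S_{F_j}(\al) \subseteq \widetilde{\Delta}^S_{F_j}(\al) \subseteq A$, and that $\al = \widetilde{\be}^{(1)} \wt \cdots \wt \widetilde{\be}^{(r)}$ is still a complete infimum with the same sets $F_j$. The strategy is to reduce to the base case by iterating the hypothesis along a $\ll$-chain $\al \ll \be \ll \be_1 \ll \cdots \ll \be^* \in A_N$ (produced by repeatedly applying the hypothesis), performing the wedge construction with successive chain elements, so that the refined element $\widetilde{\be}^{(j_0)*} := \be^* \wedge \be^{(j_0)}$ (for the distinguished $j_0$ determined by the index $l$ with $\be^*_l = \gamma_{E,l}$) acquires a coordinate profile that places it in $\Delta^S_l(\g_E) \subseteq A_N$, at which point the base-case argument yields the contradiction.

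The main obstacle is ensuring this final reduction actually succeeds: the $F_{j_0}$-coordinates of $\widetilde{\be}^{(j_0)*}$ stay pinned at $\alpha_k$ throughout the iteration, and if $\alpha_k < c_{E,k}$ for some $k \in F_{j_0}$ then $\widetilde{\be}^{(j_0)*}$ cannot lie in $\Delta^S_l(\g_E)$ directly. Overcoming this requires exploiting the full strength of the cone hypothesis $\widetilde{\Delta}^S_{F_{j_0}}(\al) \subseteq A$ together with the level-control lemmas of Section~2, in particular Lemma~\ref{neroinmezzo} and Lemma~\ref{minimideldelta}, to either refine the choice of $\be^{(j_0)}$ inside its cone (using property (G2) via Proposition~\ref{propG2}) or to transport the base-case contradiction back to $\al$ by a more delicate direct argument.
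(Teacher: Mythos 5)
Your base case is correct (and the observation that the sets $\wh{F_j}$ partition $I$ is a clean way to see why an element of $A_N=\Delta^S(\g_E)$ cannot be a complete infimum of elements of $A$), but the inductive step is not a proof: you identify the obstruction yourself and then only gesture at how it might be overcome. The obstruction is genuine and fatal to the chain strategy. For $\al$ in an interior level, the coordinates of $\widetilde{\be}^{(j_0)*}=\be^*\wedge\be^{(j_0)}$ indexed by $F_{j_0}$ remain equal to $\alpha_k$ no matter how far up the $\ll$-chain you go, so this element never enters $\Delta^S(\g_E)$ unless $\al$ already had those coordinates at the conductor; wedging with dominating elements cannot move pinned coordinates. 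Moreover, your ``decreasing induction on $i$'' never actually invokes the statement for level $i+1$ --- it attempts a direct transport to $A_N$ --- so there is no inductive mechanism to fall back on. The closing sentence (``requires exploiting the full strength of the cone hypothesis \dots\ or a more delicate direct argument'') is an accurate diagnosis, not an argument.

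The missing idea is to work \emph{locally at level $i+1$} rather than globally at level $N$. The paper's proof first reduces to the case where each $\be^{(j)}$ is consecutive to $\al$, which by the definition of the partition forces at least one $\be^{(j)}$ to lie in $A_{i+1}$. Taking $\be\in A_{i+1}$ with $\be\gg\al$, consecutiveness gives $\al<\be\wedge\be^{(j)}\le\be^{(j)}$, hence $\be\wedge\be^{(j)}=\be^{(j)}$ and $\be>\be^{(j)}$; writing $\be\in\Delta^S_{F_j}(\be^{(j)})$ one checks $\widetilde{\Delta}^S_{\wh F_j}(\be^{(j)})\subseteq\widetilde{\Delta}^S_{G_j}(\al)\subseteq A$, and Theorem~\ref{bianchi} then forces $\be^{(j)}$ into a level strictly below that of $\be$ --- contradicting that both lie in $A_{i+1}$. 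This comparison of two elements inside the same level, via Theorem~\ref{bianchi}, is the mechanism your proposal lacks; no induction and no passage to the conductor is needed. I suggest you keep your base-case computation only as intuition and rebuild the general argument along these lines.
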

 
 \begin{proof}
 Pick $\al \in A_i$ and assume by way of contradiction $\al = \be^{(1)} \wt \cdots \wt \be^{(r)}$ with $\be^{(j)} \in \widetilde{\Delta}^S_{G_j}(\al) \subseteq A$ for every $j$. This assumption allows to further assume that all the $\be^{(j)}$ are consecutive to $\al$ and at least one of them is in $A_{i+1}$. 
 By hypothesis we can find $\be \in A_{i+1}$ such that $\be \gg \al$. For every $j$, since $\al < \be \wedge \be^{(j)} \leq \be^{(j)}$, then $\be \wedge \be^{(j)} = \be^{(j)}$ implying $\be > \be^{(j)}$. Choose now $j$ such that $\be^{(j)} \in A_{i+1}$.
 Hence, $\be \not \gg \be^{(j)}$ and $\be \in \Delta^S_{F_j}(\be^{(j)})$ with $G_j \subseteq \wh F_j$.  It follows that 
 $ \widetilde{\Delta}^S_{\wh F_j}(\be^{(j)}) \subseteq  \widetilde{\Delta}^S_{G_j}(\al) \subseteq A $. Theorem \ref{bianchi} applied to $\be$ and $\be^{(j)}$ forces $\be^{(j)}$ to be in a level strictly smaller than the level of $\be$. This is a contradiction. 
 \end{proof}
 
 Next lemma shows that, if $A$ is well-behaved, then any subspace of $\mathbb{N}^d$ whose intersection with $S$ is non-empty and contained in $A$, has to be contained in a unique level.
 
 \begin{lemma}
 \label{wellbehavedlem1}
 Let $S \subseteq \mathbb{N}^d$ be a good semigroup and let $A$ be the complement of a good ideal $E \subseteq S$. Suppose $A$ to be well-behaved.
 Let $\om \in \mathbb{Z}^d$ be any element and assume $\Delta^S_F(\om) \subseteq A$ (and it is non-empty). Then $\Delta^S_F(\om) \subseteq A_i$ for some $i$.
 \end{lemma}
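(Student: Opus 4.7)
The plan is to prove that any two elements of $\Delta^S_F(\om)$ lie in the same level, proceeding in two reductions and one main step. First I reduce to the case of two comparable elements using the infimum, then to the case of two elements consecutive in $S$. The main step resolves the consecutive case by combining Theorem \ref{neri} with the well-behavedness hypothesis.

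I begin by observing that $\Delta^S_F(\om)$ is closed under $\wedge$: given $\al,\be \in \Delta^S_F(\om)$, property (G1) puts $\de := \al \wedge \be$ in $S$, and a direct coordinate check shows $\de \in \Delta^S_F(\om)$. Moreover any $\ga \in S$ with $\de \leq \ga \leq \al$ remains in $\Delta^S_F(\om)$, because its $F$-coordinates are pinched to those of $\om$ while its $\wh{F}$-coordinates stay strictly above them. Hence one can link $\de$ to $\al$ (and similarly to $\be$) by a finite chain of elements of $S$, pairwise consecutive in $S$ and all lying inside $\Delta^S_F(\om)$. It therefore suffices to prove the following claim: if $\al_1 < \al_2$ are elements of $\Delta^S_F(\om)$ consecutive in $S$, then they belong to the same level.

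Write $\al_2 \in \Delta^S_H(\al_1)$; since $\al_1$ and $\al_2$ share their $F$-coordinates with $\om$, one has $F \subseteq H \subsetneq I$. By Theorem \ref{neri} it is enough to check that $\widetilde{\Delta}^E_{\wh{H}}(\al_1) \neq \emptyset$. Suppose, for a contradiction, that this set is empty. Applying Proposition \ref{propG2} to the pair $(\al_1,\al_2)$ produces a complete infimum expression $\al_1 = \al_2 \wt \be^{(1)} \wt \cdots \wt \be^{(r)}$, with $\be^{(j)} \in \Delta^S_{G_j}(\al_1)$, $G_j \supseteq \wh{H}$, and $\bigcap_j G_j = \wh{H}$. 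Because $G_j \supseteq \wh{H}$, the emptiness of $\widetilde{\Delta}^E_{\wh{H}}(\al_1)$ forces $\widetilde{\Delta}^S_{G_j}(\al_1) \subseteq A$ for every $j$. For the remaining direction, any $\boldsymbol{\tau} \in \widetilde{\Delta}^S_H(\al_1)$ satisfies $\tau_i = \om_i$ for $i \in F$ (using $F \subseteq H$) and $\tau_k > \om_k$ for $k \in \wh{F}$, so $\widetilde{\Delta}^S_H(\al_1) \subseteq \Delta^S_F(\om) \subseteq A$. The well-behavedness of $A$ then yields $\al_1 \in E$, contradicting $\al_1 \in \Delta^S_F(\om) \subseteq A$.

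The main obstacle is arranging every direction $\widetilde{\Delta}^S_H(\al_1)$ and $\widetilde{\Delta}^S_{G_j}(\al_1)$ appearing in the complete infimum to be contained in $A$, as required by Definition \ref{wellbehaved}. The directions indexed by the $G_j$ are handled for free by the assumption $\widetilde{\Delta}^E_{\wh{H}}(\al_1) = \emptyset$ together with $G_j \supseteq \wh{H}$; the remaining direction $H$ requires exploiting the ambient constraint $F \subseteq H$, which is precisely where one uses that both $\al_1$ and $\al_2$ lie in the common slice $\Delta^S_F(\om)$.
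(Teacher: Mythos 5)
Your proof is correct and follows essentially the same route as the paper's: reduce to consecutive elements of $\Delta^S_F(\om)$, observe via Theorem \ref{neri} that a level jump would force $\widetilde{\Delta}^E_{\wh H}(\al_1)=\emptyset$, and then contradict well-behavedness using the complete infimum supplied by Proposition \ref{propG2}. You are somewhat more explicit than the paper in two spots — the reduction to consecutive elements through the infimum and chains, and the verification that $\widetilde{\Delta}^S_H(\al_1)\subseteq\Delta^S_F(\om)\subseteq A$ for the direction containing $\al_2$ — but these merely complete the same argument rather than constituting a different one.
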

 
 \begin{proof}
 It sufficient to show that any two consecutive elements in $\al, \be \in \Delta^S_F(\om)$ are in the same level.
 Say that $\be \in \Delta^S_H(\al)$ with $H \supseteq F$ and suppose $\al \in A_i$ and $\be \in A_{i+1}$. This implies that $\widetilde{\Delta}^E_{\wh H}(\al) = \emptyset$ otherwise we would get a contradiction with Theorem \ref{neri}. Using property (G2) we can write $\al = \be \wt \be^{(2)} \wt \cdots \wt \be^{(r)}$ with, for $j=2, \ldots, r$, $\be^{(j)} \in \widetilde{\Delta}^S_{G_j}(\al)$ and $G_j \supseteq \wh H$. Hence, 
 $\widetilde{\Delta}^S_{G_j}(\al) \subseteq \widetilde{\Delta}^S_{\wh H}(\al) \subseteq A$. By Definition \ref{wellbehaved}, we get a contradiction since $\al \in A$.
 \end{proof}
 
 As consequence of Theorem \ref{bianchi} we also have that: 
\begin{cor}
\label{wellbehavedcor}
With the same notation of Lemma \ref{wellbehavedlem1}, 
if $d=2$ and $\Delta^S(\om)$ is non-empty and contained in $A$, then $\Delta^S(\om) \subseteq A_i$ for some $i$.
\end{cor}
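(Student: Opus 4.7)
The plan is to combine Lemma~\ref{wellbehavedlem1}, which places each single slice $\Delta_k^S(\om)\subseteq A$ into one level when $A$ is well-behaved, with Lemma~\ref{minimideldelta}, which places the minimal elements of $\Delta^S(\om)\subseteq A$ into a single common level. The role of the hypothesis $d=2$ is that it forces the decomposition $\Delta^S(\om) = \Delta_1^S(\om) \cup \Delta_2^S(\om)$, so that there are at most two strata to reconcile.

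First I would dispose of the trivial case: if only one of the two strata is non-empty, say $\Delta_1^S(\om)$, then Lemma~\ref{wellbehavedlem1} applied with $F=\{1\}$ yields directly $\Delta^S(\om)=\Delta_1^S(\om)\subseteq A_i$ for some $i$, and there is nothing more to prove. Assume then that both $\Delta_1^S(\om)$ and $\Delta_2^S(\om)$ are non-empty. Applying Lemma~\ref{wellbehavedlem1} separately with $F=\{1\}$ and $F=\{2\}$ yields indices $i_1$ and $i_2$ with $\Delta_k^S(\om)\subseteq A_{i_k}$ for $k=1,2$; the problem reduces to showing $i_1=i_2$.

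To finish, let $\al^{(k)}$ denote the unique minimum of $\Delta_k^S(\om)$ (which exists by property (G1)). Since every element of $\Delta_k^S(\om)$ has $k$-th coordinate equal to $\omega_k$ while every element of $\Delta_j^S(\om)$ with $j\neq k$ has $k$-th coordinate strictly larger than $\omega_k$, no element of one stratum can be $\leq$ an element of the other. Consequently $\al^{(1)}$ and $\al^{(2)}$ are precisely the minimal elements of the whole set $\Delta^S(\om)$, and Lemma~\ref{minimideldelta} places them in a common level, giving $i_1=i_2$ as required. The only substantive step is the identification of the minima of $\Delta^S(\om)$ with those of its two coordinate strata; once this coordinate-wise observation is in place, the two lemmas compose directly and I do not foresee any additional obstacle.
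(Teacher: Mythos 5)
Your proof is correct and follows essentially the route the paper intends: Lemma~\ref{wellbehavedlem1} puts each of the two half-lines $\Delta_1^S(\om)$ and $\Delta_2^S(\om)$ into a single level, and the identification of the minima of $\Delta^S(\om)$ with the (unique, by (G1)) minima of the two incomparable strata lets Lemma~\ref{minimideldelta} force those two levels to coincide. Since Lemma~\ref{minimideldelta} is itself proved via Theorem~\ref{bianchi}, your argument is just a cleanly packaged version of the direct "consequence of Theorem~\ref{bianchi}" the paper alludes to, and no step is missing.
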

 
 As main consequence of the previous lemma, 
 for $A= S \setminus E$ a well-behaved set, we give a criterion describing the level of all the elements having some coordinate not in the projection of $E$. Our notation is similar to that used in Section \ref{nonlocal} in the case of non-local good semigroups. 
 
 Let $S \subseteq \mathbb{N}^d$ be a good semigroup and let $d_1, d_2 \geq 1$ be positive integers such that $d_1 + d_2 = d$. Write a partition of the set of indexes $I= \lbrace 1, \ldots, d \rbrace = I_1 \cup I_2 $, with $|I_j| = d_j$.
 Define $S_1$ to be the canonical projection of $S$ on the set of indexes $I_1$ and $S_2$ to be the canonical projection of $S$ on the set of indexes $I_1$. The set $S_1$ and $S_2$ are also good semigroups $S \subseteq S_1 \times S_2$. For each element $\al \in S$, we write $\al = (\al^{(1)}, \al^{(2)})$ with $\al^{(j)} \in S_j$ and we use the corresponding notation also for $\al \in \mathbb{N}^d \subseteq \mathbb{N}^{d_1} \times \mathbb{N}^{d_2}$. Let $E_1$, $E_2$ be the projections of $E$ and $A^{(h)}:= S_h \setminus E_h$ for $h=1,2$. We denote by $A^{(h)}_i$ the $i$-th level of the partition of $A^{(h)}$ in the semigroup $S_h$.
 

 
 \begin{theorem}
 \label{levelswellbehaved}
 Let $S \subseteq \mathbb{N}^d$ be a local good semigroup and let $A$ be the complement of a good ideal $E \subsetneq S$. Suppose $A$ to be well-behaved. Define $S_1, S_2$ as above. Pick $\al^{(1)} \in A^{(1)}_i$. Then
 $$ \Gamma(\al^{(1)}) :=  \{ \al \in \mathbb{N}^d  :  \al^{(2)} \in S_2   \} \cap S \subseteq A_i. $$
 The analogous result holds for elements in the projection $S_2$ by switching the coordinates.
 \end{theorem}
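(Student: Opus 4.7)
The plan is to establish the claim in three stages: first, show $\Gamma(\al^{(1)}) \subseteq A$; second, apply Lemma \ref{wellbehavedlem1} to conclude that $\Gamma(\al^{(1)})$ sits inside a single level $A_j$; and third, identify $j = i$ by induction on $i$.

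For the first stage, since $E_1$ is the projection of $E$ onto the coordinates $I_1$ and $\al^{(1)} \in A^{(1)} = S_1 \setminus E_1$, no element of $E$ can have first projection equal to $\al^{(1)}$, so $\Gamma(\al^{(1)}) \cap E = \emptyset$ and hence $\Gamma(\al^{(1)}) \subseteq A$. For the second stage, I would observe that $\Gamma(\al^{(1)}) = \Delta^S_{I_1}(\om)$ for $\om = (\al^{(1)}, -\bs{1}) \in \mathbb{Z}^d$, since a point $\be \in S$ belongs to $\Delta^S_{I_1}(\om)$ iff $\be^{(1)} = \al^{(1)}$ and $\be^{(2)} \geq \bs{0}$, the latter condition being automatic. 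Non-emptiness of $\Gamma(\al^{(1)})$ follows from $\al^{(1)} \in S_1$ being the projection of some element of $S$, so Lemma \ref{wellbehavedlem1} yields a unique $j$ with $\Gamma(\al^{(1)}) \subseteq A_j$.

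For the third stage, the base case $i = 1$ exploits locality: any zero coordinate of an element of $S_1$ would lift to a zero coordinate of some $\al \in S$, forcing $\al = \bs{0}$ by locality of $S$ and hence the corresponding projection to vanish. Therefore the projection $S_1$ is itself local, $A^{(1)}_1 = \{\bs{0}\}$, and $\Gamma(\bs{0}) = \{\bs{0}\} \subseteq A_1$. In the inductive step, I would apply Proposition \ref{livelloinferiore} inside $A^{(1)}$ to obtain $\be^{(1)} \in A^{(1)}_{i-1}$ strictly below $\al^{(1)}$, so that $\Gamma(\be^{(1)}) \subseteq A_{i-1}$ by inductive hypothesis. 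Taking $\al$ minimal in $\Gamma(\al^{(1)})$ via property (G1) and setting $\be := \al \wedge (\be^{(1)}, \eta^{(2)})$ for any $\eta^{(2)} \in S_2$ with $(\be^{(1)}, \eta^{(2)}) \in S$, one obtains $\be \in \Gamma(\be^{(1)}) \cap A_{i-1}$ with $\be \leq \al$. Connecting $\be$ and $\al$ by a chain of consecutive elements of $S$ and applying Theorems \ref{bianchi} and \ref{neri}, Lemma \ref{neroinmezzo}, and the well-behaved hypothesis, I would track the level along the chain to conclude $\al \in A_i$.

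The principal obstacle lies in this level-tracking along the chain from $\be$ to $\al$. Intervening elements of $E$ can, a priori, allow the level to skip more than one step upon re-entering $A$, while the chain could also stay at level $i-1$ too long. Leveraging the well-behavedness (which, via Lemma \ref{wellbehavedlem1}, forces maximal $\Delta^S_F$-strata contained in $A$ to be homogeneous in level), together with a careful case analysis on the directions $F$ parameterizing each transition $\al \in \Delta^S_F(\eta)$ and on the emptiness of the corresponding sets $\tdE{\wh F}{\eta}$, one must pin down the level difference between $\be$ and $\al$ to exactly $+1$ and deduce $j = i$.
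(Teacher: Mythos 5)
Your stages 1 and 2 are correct and coincide with the paper's setup: $\Gamma(\al^{(1)})$ avoids $E$ because $E_1$ is the projection of $E$, it equals a set of the form $\Delta^S_{I_1}(\cdot)$, and Lemma \ref{wellbehavedlem1} confines it to a single level whose identification reduces (via property (G1)) to locating the minimal element $\al$ of $\Gamma(\al^{(1)})$. The base case via locality is also fine. The gap is that the entire mathematical content of the theorem sits in the step you explicitly defer: showing that the level of $\al$ is exactly $i$. Your last paragraph names the obstacle ("one must pin down the level difference between $\be$ and $\al$ to exactly $+1$") but offers no argument for it, and this is precisely where the paper spends almost all of its effort: it distinguishes the case where $\be^{(1)},\al^{(1)}$ are consecutive in $S_1$ from the case where elements of $E_1$ lie between them, and in the first case proves the biconditional "$\be^{(1)}$ is one level below $\al^{(1)}$ in $A^{(1)}$ if and only if $\be$ is one level below $\al$ in $A$" — the forward direction via Theorem \ref{bianchi} after checking $\widetilde{\Delta}^S_{\wh F}(\be)\subseteq A$, and the converse via a delicate property (G2) construction that manufactures $\om\in A_i$ with $\om\wedge\al=\al$, exploiting minimality of $\al$ in $\Gamma(\al^{(1)})$ and a combinatorial argument on index sets ($H_j\cup F\nsupseteq I_1$ for some $j$). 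None of this appears in the proposal, so the proof is not complete.

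There is also a structural problem with the induction as you organize it. You induct on the level $i$ and invoke Proposition \ref{livelloinferiore} to produce some $\be^{(1)}\in A^{(1)}_{i-1}$ below $\al^{(1)}$; but such a $\be^{(1)}$ need not be consecutive to $\al^{(1)}$ in $A^{(1)}$, and the chain in $S$ from $\be$ up to $\al$ then passes through fibers $\Gamma(\de^{(1)})$ for intermediate $\de^{(1)}\in A^{(1)}$ whose levels in $A$ are exactly what the theorem is trying to determine — so "tracking the level along the chain" is circular. The paper instead performs a well-founded induction on consecutive elements of $A^{(1)}$, with the full statement for the immediate predecessor $\be^{(1)}$ as the hypothesis; this handles both the case $\be^{(1)}\in A^{(1)}_{i-1}$ and the case $\be^{(1)}\in A^{(1)}_i$ (which your level-indexed induction cannot reach), and it reduces the chain to a single consecutive step in $S$ where Theorems \ref{bianchi} and \ref{neri} actually apply. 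To repair the proposal you would need both to reorganize the induction in this way and to supply the two-case analysis and the (G2) construction that the paper carries out.
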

 
 \begin{proof}
 Since $S$ is local, $A_1= \{ \boldsymbol{0} \} $ and therefore the result is true for 
 $\al^{(1)} = \boldsymbol{0}_{S_1}$. Thus we can reduce to prove that, if $\be^{(1)} < \al^{(1)}$ are consecutive in $A^{(1)}$ and the thesis is true for $\be^{(1)}$, then it is true for $\al^{(1)}$. Clearly, for $\al^{(1)} > \boldsymbol{0}_{S_1}$, $\Gamma(\al^{(1)}) = \Delta^S_{I_1}(\al^{(1)}, \boldsymbol{0}_{S_2}) \subseteq A$. By Lemma \ref{wellbehavedlem1}, $\Gamma(\al^{(1)}) \subseteq A_h$ for some $h$. To show that $h=i$ it is sufficient to show that the minimal element of $\Gamma(\al^{(1)})$, that we call $\al$, is in $A_i$. We consider now different cases:
 
\noindent \bf Case 1: \rm $\be^{(1)}, \al^{(1)}$ are consecutive in $S_1$. \\
 Let $\be'$ be the minimal element of $\Gamma(\be^{(1)})$. If $\be' < \de < \al$, then since $\be^{(1)}, \al^{(1)}$ are consecutive in $S_1$ and by minimality of $\al^{(1)}$, necessarily $\de^{(1)}=\be^{(1)}$. Hence we can find an element $\be \in \Gamma(\be^{(1)})$ such that $\be$ and $\al$ are consecutive in $S$. Say that $\be^{(1)} \in A^{(1)}_k$ with $k \in \{i-1, i \}$. Assuming by induction the thesis true for $\be^{(1)}$, we get 
 $\be', \be \in A_k$.  
 Say that $\al \in \Delta^S_F(\be)$ with $F \nsupseteq I_1$ and possibly $F= \emptyset$. Then $\al^{(1)} \in \Delta^{S_1}_{F \cap I_1}(\be^{(1)})$. Notice that $h \in \{k, k+1 \}$. We have to show that $k= i-1$ if and only if $h= k+1$. This will imply $h=i$. 
 
 First suppose $k= i-1$. Then either $F \cap I_1 = \emptyset$ (i.e. $\be^{(1)} \ll \al^{(1)}$) or, by Theorem \ref{neri}, $\widetilde{\Delta}^{S_1}_{\wh F \cap I_1}(\be^{(1)}) \subseteq A^{(1)}.$ If $F \cap I_1 \neq \emptyset$ this implies that $\widetilde{\Delta}^{S}_{\wh F}(\be) \subseteq A,$ since $E_1$ is the projection of $E$. If instead, $F \cap I_1 = \emptyset$, if also $F = \emptyset$ we are done since $\be \ll \al$, otherwise $F \subseteq I_2$. In the case $F \subseteq I_2$, it follows that $$\widetilde{\Delta}^{S}_{\wh F}(\be)  \subseteq \widetilde{\Delta}^{S}_{I_1}(\be) \subseteq \Gamma(\be^{(1)}) \subseteq A.$$ In all such cases, by Theorem \ref{bianchi}, $\be$ is in a level strictly smaller than $\al$, implying $h= k+1= i$.
 
 Conversely, suppose $k=i$ and show $h=i$. In this case clearly $F \cap I_1 \neq \emptyset$ and by Theorem \ref{bianchi} $\widetilde{\Delta}^{E_1}_{\wh F \cap I_1}(\be^{(1)}) \neq \emptyset.$ Pick $\de^{(1)} \in \widetilde{\Delta}^{E_1}_{\wh F \cap I_1}(\be^{(1)})$ and let $\de$ be the minimal element of $\Gamma(\de^{(1)}) \cap E$. By minimality of $\be'$, since $\be^{(1)} < \de^{(1)}$, then $\be' < \de$. Thus $\de \in \Delta^S_G(\be')$ with $G \cap I_1 \supseteq \wh F \cap I_1$.  Applying property (G2) to $\de$ and $\be'$ we write
 $\be' = \de \wt \om^1 \wt \cdots \wt \om^r$ with $\om^j \in \Delta^S_{H_j}(\be)$ and $\bigcap_{j=1}^r H_j = \wh G$. As usual we can assume that $\om^j$ are consecutive to $\be'$ and by Theorem \ref{neri}, $\om^j \in A_i$.
 We prove that there exists $j$ such that $H_j \cup F \nsupseteq I_1$. Indeed, if for every $j=1, \ldots, r$, $I_1 \subseteq H_j \cup F$, we would have $I_1 \subseteq (\bigcap_{j=1}^r H_j \cup F) = \wh G \cup F.  $ Hence $$ I_1 = I_1 \cap (\wh G \cup F) = (I_1 \cap \wh G) \cup (I_1 \cap F) = (I_1 \cap F)$$ and this is a contradiction since $F \nsupseteq I_1$. Choose now this $j$ such that $H_j \cup F \nsupseteq I_1$ and call $\om := \om^j$ and $H:=H_j$.
 Since $\be' \leq \be < \al$, $\al \in \Delta^S_U(\be')$ with $U \subseteq F$. Furthermore, since $I_1 \nsubseteq H \cup F$, then $I_1 \nsubseteq H \cup U$, and $H \cup U \neq I$.  
 Observing that $\om \wedge \al \in \Delta^S_{H \cup U}(\be')$ we get that $\be' < \om \wedge \al \leq \al$. But the condition $I_1 \nsubseteq H \cup U$ implies that $\om \wedge \al \not \in \Gamma(\be^{(1)}).$ Thus, since $\be^{(1)}, \al^{(1)}$ are consecutive in $S_1$, $(\om \wedge \al)^{(1)}= \al^{(1)}$, and by minimality of $\al$ in $\Gamma(\al^{(1)})$ we get
 $\om \wedge \al = \al$. This implies $\om \geq \al$ and $\al \in A_i$.
 
 \noindent \bf Case 2: \rm There exists $\te^{(1)} \in E_1 $ such that $\be^{(1)} < \te^{(1)}< \al^{(1)}$. \\
 Denote by $\be$ the minimal element of of $\Gamma(\be^{(1)})$ and assume the thesis true for $\be^{(1)}$. By Lemma \ref{neroinmezzo}, $\be^{(1)} \in A_{i-1}^{(1)}$ and hence $\be \in A_{i-1}$. Say again that $\al \in \Delta^S_F(\be)$ with $F \nsupseteq I_1$ and possibly $F= \emptyset$.
 Thus $\al^{(1)} \in \Delta^{S_1}_{F \cap I_1}(\be^{(1)})$ and $\te^{(1)} \in \Delta^{S_1}_{V \cap I_1}(\be^{(1)})$ with $V \supseteq F$. 
 If $F \cap I_1 \neq \emptyset$, using that $\be^{(1)} \in A^{(1)}$, by property (G1) of the good ideal $E_1$, we get $\widetilde{\Delta}^{E_1}_{\wh V \cap I_1}(\be^{(1)}) = \emptyset$. Therefore,
 $ \widetilde{\Delta}^{S_1}_{\wh F \cap I_1}(\be^{(1)}) \subseteq \widetilde{\Delta}^{S_1}_{\wh V \cap I_1}(\be^{(1)}) \subseteq A^{(1)}.  $ Now both in this case and in the case when $F \cap I_1 = \emptyset$ we conclude proceeding as in Case 1, in the subcase $k=i-1$.
 \end{proof}
 
 
 As a corollary we describe the case $d=2$ adding an observation on upper bounds for the maximal coordinate of elements in the first levels. Again the reader can compare this statement with the representation in Figure \ref{Figura 4}.

\begin{cor}
\label{wellbehavedproj}
Let $S \subseteq S_1 \times S_2 \subseteq \mathbb{N}^2$ be a local good semigroup and pick $\om= (\omega_1, \omega_2) \in S$. Suppose $A= \Ap(S, \om)$ to be well-behaved. Let $\lbrace u_1, \ldots, u_{\omega_1} \rbrace$ be the Ap\'ery set of $S_1$ with respect to $\omega_1$. Then for every $i=1, \ldots \omega_1$, $$ \Delta_1^S(u_i, -1) \subseteq A_i. $$ Moreover if $\al=(a_1, a_2) \in A_i$, then $a_1 \leq u_i $. 
The analogous result holds for the projection $S_2$ by switching the coordinates.
\end{cor}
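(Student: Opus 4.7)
The first part of the corollary is a direct specialization of Theorem~\ref{levelswellbehaved} to $d_1=d_2=1$. Since $S_1\subseteq\mathbb{N}$ is numerical, the canonical partition of $A^{(1)}=\Ap(S_1,\omega_1)$ reduces to singletons: writing the elements in increasing order we get $A^{(1)}_i=\{u_i\}$, because for $d=1$ the partial order $\le\le$ collapses to the usual $<$ on $\mathbb{N}$. Applying Theorem~\ref{levelswellbehaved} to $\al^{(1)}=u_i\in A^{(1)}_i$ then yields exactly $\Gamma(u_i)=\Delta^S_1(u_i,-1)\subseteq A_i$.

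For the second part my plan is to induct on $i\in\{1,\dots,\omega_1\}$. The base case $i=1$ is immediate from locality of $S$: $A_1=\{\boldsymbol{0}\}$, so $a_1=0=u_1$. For the inductive step take $\al=(a_1,a_2)\in A_i$ with $2\le i\le\omega_1$ and assume $a_1>u_i$ for contradiction. I would then split into two cases according to whether $a_1\in A^{(1)}$ or $a_1\in E_1$.

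The case $a_1\in A^{(1)}=\{u_1,\dots,u_{\omega_1}\}$ is easy: since $a_1>u_i$ and the $u_j$ are listed in increasing order, $a_1=u_k$ for some $k>i$, and the first part of the corollary forces $\al\in\Gamma(u_k)\subseteq A_k$, contradicting $\al\in A_i$.

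The case $a_1\in E_1$ is the main obstacle and I plan to rule it out via a contradiction argument using well-behavedness. Use Proposition~\ref{livelloinferiore} to select $\be=(b_1,b_2)\in A_{i-1}$ with $\be\le\al$; by the inductive hypothesis $b_1\le u_{i-1}<u_i<a_1$, so in particular $b_1<a_1$. Because $a_1\in E_1=\omega_1+S_1$ there exists $(a_1-\omega_1,b)\in S$, and hence $(a_1,b+\omega_2)=(a_1-\omega_1,b)+\om\in E$; choosing $b$ large enough using the conductor axiom (G3), we may assume $b+\omega_2>a_2$, so $\Delta^E_1(\al)\ne\emptyset$. Lemma~\ref{minimidelta}(1) then gives $\widetilde{\Delta}^E_{\{2\}}(\al)=\emptyset$, i.e.\ no element of $E$ has second coordinate $a_2$ with first coordinate $\ge a_1$. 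Combining this with the well-behavedness of $A$, which in the form it takes for $d=2$ asserts that an element of $A$ cannot simultaneously be the complete infimum of an element of $\Delta^S_1(\al)\cap A$ and an element of $\Delta^S_2(\al)\cap A$, and applying Theorems~\ref{bianchi} and \ref{neri} together with Proposition~\ref{propG2} to the chain of consecutive elements of $S$ lying between $\be$ and $\al$, one forces the level of $\al$ to strictly exceed $\omega_1$, contradicting $i\le\omega_1$. This is the delicate part of the argument, modeled on the proof of Theorem~\ref{levelswellbehaved}; the first case reduced immediately to that theorem, but this second case genuinely exploits well-behavedness to prevent an element whose first coordinate lies in $E_1$ from appearing in one of the first $\omega_1$ levels of the Ap\'ery set.
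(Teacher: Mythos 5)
Your first part is fine: it is exactly the specialization of Theorem \ref{levelswellbehaved} to the case where both projections are numerical, and the identification $A^{(1)}_i=\{u_i\}$ is correct. The base case and the case $a_1\in A^{(1)}$ of your second part are also correct. The problem is the case $a_1\in E_1$, where there are two genuine gaps. First, the deduction of $\Delta^E_1(\al)\neq\emptyset$ fails: $a_1\in E_1$ only tells you that \emph{some} point of $E$ lies on the vertical line through $a_1$, and you cannot ``choose $b$ large'' --- for a fixed first coordinate $x$ with $x<c_1$, the set $\{y:(x,y)\in S\}$ may well be finite (e.g.\ in Figure \ref{Figura 4} the only element of $S$ on the line $x=2$ is $\e=(2,3)$), so (G3) does not produce elements of $E$ lying \emph{above} $\al$ on that line. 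Second, and more seriously, the conclusion of this case --- ``applying Theorems \ref{bianchi} and \ref{neri} together with Proposition \ref{propG2} \dots\ one forces the level of $\al$ to strictly exceed $\omega_1$'' --- is a placeholder rather than an argument; no mechanism is indicated that would push the level of $\al$ past $\omega_1$, and knowing that the horizontal ray through $\al$ lies in $A$ does not by itself do this. Since this is precisely the case in which the inequality could conceivably fail, the proof is incomplete.

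For comparison, the paper's argument for the ``moreover'' part is much shorter and avoids both the induction and the case split on $a_1$. Let $\be=(u_i,b_2)$ be the minimal element of $\Delta^S_1(u_i,-1)$, which lies in $A_i$ by the first part, and suppose $\al=(a_1,a_2)\in A_i$ with $a_1>u_i$. Two elements of the same level cannot satisfy $\be\ll\al$, so $a_2\le b_2$. If $a_2<b_2$, then (G1) puts $\al\wedge\be=(u_i,a_2)$ in $\Delta^S_1(u_i,-1)$, contradicting the minimality of $\be$. If $a_2=b_2$, property (G2) applied to $\al$ and $\be$ produces $\te\in\Delta^S_1(\be)\subseteq\Delta^S_1(u_i,-1)\subseteq A_i$, and then $\be=\al\wt\te$ is a complete infimum of elements of $A$, so well-behavedness forces $\be\in E$, a contradiction. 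You may want to adopt this route rather than try to repair the $a_1\in E_1$ case.
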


\begin{proof}
Clearly the set $\Delta_1^S(u_i, -1)$ is non-empty and by Lemma \ref{wellbehavedlem1} is contained in the level $A_i$. Call $\be=(u_i, b_2)$ the minimal element of $\Delta_1^S(u_i, -1)$.
Assume there exists $\al \in A_i$ such that $a_1 > u_i$. Then $a_2 \leq b_2$ since $\al \not \gg \be$.
By property (G1), the minimality of $\be$ excludes the case $a_2 < b_2$. If $a_2 = b_2$, by property (G2) we find $\te \in \Delta^S_1(\be) \subseteq A_i$. Hence $\be = \al \wedge \te$ and this is a contradiction since they are all elements of $A_i$.
\end{proof}

 We conclude this section with some further results in the case when $d=2$, which will be needed to prove the main theorem of next section. First we have a lemma describing an equivalent condition of being well-behaved.
 
  \begin{lemma}
 \label{2ramiwb}
 Let $S \subseteq \mathbb{N}^2$ and let $A$ be the complement of a proper good ideal of $S$. The following assertions are equivalent:
 \begin{enumerate}
     \item $A$ is well-behaved.
     \item For every level $A_i$ and every $\al, \be \in A_i$, 
     $\al \wedge \be \in A$ if and only if $\al \wedge \be \in \{ \al, \be \}$.
     \item For every level $A_i$ with $i < N$ and every $\al \in A_i$, there exists $\be \in A_{i+1}$ such that $\be \gg \al$.
 \end{enumerate}
 \end{lemma}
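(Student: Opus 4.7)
My plan is to prove the equivalences via the cycle $(1) \Rightarrow (3) \Rightarrow (2) \Rightarrow (1)$, with $(3) \Rightarrow (1)$ available as a shortcut. The implication $(3) \Rightarrow (1)$ is an immediate special case of Proposition \ref{curvepianesonowellbehaved} (whose hypothesis, for $d = 2$, is exactly condition (3)), so no additional work is needed.

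For $(2) \Rightarrow (1)$ I argue by contrapositive: if $A$ is not well-behaved, then there exists $\al \in A$ which is a complete infimum with $\ds{1}{\al}, \ds{2}{\al} \subseteq A$ (the only possible form of Definition \ref{wellbehaved} in $\mathbb{N}^2$). Lemma \ref{minimideldelta} then places the minimal elements $\te^{(1)}$ of $\ds{1}{\al}$ and $\te^{(2)}$ of $\ds{2}{\al}$ in a common level $A_k$, and $\te^{(1)} \wedge \te^{(2)} = \al \in A \setminus \{\te^{(1)}, \te^{(2)}\}$ directly contradicts (2).

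For $(1) \Rightarrow (3)$, take $\al \in A_i$ with $i < N$. The level dichotomy of \cite[Lemma 2.3(1)]{GMM} furnishes either a strict dominator $\be \in A_{i+1}$, $\be \gg \al$ (and we are done), or a complete-infimum representation $\al = \be^{(1)} \wt \be^{(2)}$ with $\be^{(j)} \in A$ consecutive to $\al$ (by Proposition \ref{propG2}) and at least one $\be^{(j)} \in A_{i+1}$. Theorem \ref{neri}, used contrapositively, forces $\tdE{\widehat{\{j\}}}{\al} = \emptyset$, hence $\ds{\widehat{\{j\}}}{\al} \subseteq A$, whenever the corresponding $\be^{(j)}$ lies in a level different from $A_i$. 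When both $\be^{(j)} \in A_{i+1}$, the two inclusions let Definition \ref{wellbehaved} conclude $\al \in E$, contradicting $\al \in A$. When only $\be^{(1)} \in A_{i+1}$ while $\be^{(2)} \in A_i$, I reapply \cite[Lemma 2.3(1)]{GMM} to $\be^{(2)}$: its first alternative produces an element of $A_{i+1}$ strictly dominating $\be^{(2)}$ and therefore $\al$, contradicting the standing assumption; in its second alternative, yielding new constituents $\ga^{(1)}, \ga^{(2)}$, the fact $\ga^{(1)} \gg \al$ combined with \cite[Lemma 2.3(3)]{GMM} and the standing assumption forces $\ga^{(1)}$ into a level $\geq i + 2$, so that the requirement of at least one $\ga^{(k)}$ in $A_{i+1}$ yields $\ga^{(2)} \in A_{i+1}$; a second application of Theorem \ref{neri} at $\be^{(2)}$ then gives $\ds{1}{\be^{(2)}}, \ds{2}{\be^{(2)}} \subseteq A$, and Definition \ref{wellbehaved} forces $\be^{(2)} \in E$, a final contradiction. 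The remaining direction $(3) \Rightarrow (2)$ follows the same scheme, starting from $\de := \al \wedge \be \in A_h$ with $h < i$ (by \cite[Lemma 2.3(5)]{GMM}), iterating (3) upward and using \cite[Lemma 2.3(3)]{GMM} to reach the needed collision.

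The main obstacle is the mixed-level subcase of $(1) \Rightarrow (3)$, where one complete-infimum constituent of $\al$ shares its level with $\al$: Theorem \ref{neri} is silent there, and the recursion on that constituent must carefully exploit the level-incompatibility of \cite[Lemma 2.3(3)]{GMM} together with the standing assumption on $\al$ to push the next layer of constituents into levels $\geq i + 2$, so that Definition \ref{wellbehaved} can at last be triggered.
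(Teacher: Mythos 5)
Your two ``soft'' links are fine: $(3)\Rightarrow(1)$ is indeed just Proposition \ref{curvepianesonowellbehaved} for $d=2$, and your $(2)\Rightarrow(1)$ via Lemma \ref{minimideldelta} (same level for the minimal elements of $\ds{1}{\al}$ and $\ds{2}{\al}$, whose infimum is $\al\in A$) is correct and is a genuinely different, cleaner argument than the paper's direct proof of $(1)\Rightarrow(2)$. But the cycle you actually rely on, $(1)\Rightarrow(3)\Rightarrow(2)\Rightarrow(1)$, is not closed, because both of its first two links have real gaps.

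In $(1)\Rightarrow(3)$ everything hinges on taking the complete-infimum constituents supplied by \cite[Lemma 2.3(1)]{GMM} to be \emph{consecutive} to the base point while keeping their level membership. Neither that lemma nor Proposition \ref{propG2} gives this: the consecutive element of $\ds{1}{\al}$ (its minimal element) may lie in $E$ or in $A_i$ even though some higher element of the same line lies in $A_{i+1}$ (this already happens in Figure \ref{Figura 4}: above $(9,9)\in A_3$ the line $x=9$ first meets $(9,12)\in E$ and only later elements of $A_4$). If it lies in $E$, then $\ds{1}{\al}\not\subseteq A$, so Definition \ref{wellbehaved} cannot be triggered at $\al$ and Theorem \ref{neri} is inapplicable; if it lies in $A_i$, Theorem \ref{neri} is silent, as you note. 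Your mixed-case recursion does not repair this: there the direction-$1$ constituent $\ga^{(1)}$ of $\be^{(2)}$ is forced into a level $\geq i+2$, while by \cite[Lemma 2.3(6)]{GMM} an element of $A$ consecutive to $\be^{(2)}\in A_i$ and sharing a coordinate with it can only lie in $A_i$ or $A_{i+1}$, and $A_{i+1}$ is excluded because such an element dominates $\al$; so the consecutive neighbour ``in a level different from $A_i$'' that you want to feed into Theorem \ref{neri} does not exist, and the only remaining possibilities (neighbour in $E$ or in $A_i$) are exactly the unresolved configurations. What is missing is the maximality device of the paper's proof of $(1)\Rightarrow(2)$: one \emph{relocates} the violation of well-behavedness, choosing $\om$ maximal in the lower level on the segment between $\al\wedge\be$ and $\be$, and shows via property (G1) and Theorem \ref{neri} that both $\ds{1}{\om}$ and $\ds{2}{\om}$ lie in $A$; nothing in your argument plays this role. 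Finally, $(3)\Rightarrow(2)$ is only asserted: iterating $(3)$ from $\de=\al\wedge\be\in A_h$ gives a chain of dominating elements which, by the non-domination constraints you invoke, is merely confined to the box $(\delta_1,\beta_1]\times(\delta_2,\alpha_2]$, and no contradiction (``collision'') follows from that alone. The paper avoids both difficulties by travelling the cycle the other way: the hard maximality argument proves $(1)\Rightarrow(2)$, and $(2)\Rightarrow(3)$ then falls out of the very definition of the partition in levels.
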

 
 \begin{proof}
  1. $\Rightarrow$ 2.  Let $\al, \be \in A_i$, $\te = \al \wedge \be$ with $\al \in \Delta_1^S(\te)$ and $\be \in \Delta_2^S(\te)$. Suppose $\te \neq \al, \be$. If $\Delta^S(\te) \subseteq A$ we are done. Hence assume $\Delta^E_1(\te) \neq \emptyset$ (if $\Delta^E_2(\te) \neq \emptyset$ we use the analogous argument) and $\te \in A$. Therefore, $\te \in A_h$ with $h < i$. By property (G1) used on $E$ we get $\Delta^S_2(\te) \subseteq A$ and by Theorem \ref{neri} we can find $\om \in A_h \cap \Delta^S_2(\te)$. We can suppose $\om$ to be the maximal element in $A_h$ such that $\te < \om < \be $. It follows that $\Delta^S_1(\om) \subseteq A$ otherwise we would contradict such maximality by Theorem \ref{neri}. Thus $\Delta^S(\om) \subseteq A$ and it is non-empty. 
 This contradicts the hypothesis of having $A$ well-behaved.
 
 2. $\Rightarrow$ 3. Suppose there exists $\al \in A_i$ with $i <N$ that is not dominated by any element of $A_{i+1}$. Necessarily, by definition of the partition in levels, $\al = \te \wt \om$ with $\te, \om \in A_{i+1}$. This contradicts 2.
 
 3. $\Rightarrow$ 1.
 This is Lemma \ref{curvepianesonowellbehaved}.
 \end{proof}

 Next result gives strong restrictions on the areas of $\mathbb{N}^2$ where the elements of a fixed level of a well-behaved set can exist. This description is done in term of the absolute elements of $S$ which are also in that level. We recall that $\al \in S $ is an absolute element if $\Delta^S(\al)= \emptyset.$ 
 
 In general, we say that a level $A_i$ is \it bounded with respect to the coordinate $h$ \rm if there exists $n \in \mathbb{N}$ such that for every $\al=(a_1, a_2) \in A_i$, $a_h < n$. In the opposite case we say that $A_i$ is \it infinite with respect to the coordinate $h$. \rm
 
 Let $c_E= (q_1, q_2)$ be the conductor of $E:=S \setminus A$.
 Fixed a level $A_i$, let $\{\te^{(1)}, \ldots, \te^{(r)} \}$ be all the absolute element of $S$ in the level $A_i$. We assume them to be ordered increasingly with respect to the first component (thus decreasingly with respect to the second one).
 Moreover, if $A_i$ is infinite with respect to the coordinate $1$, we define $\te^{(0)}:=(q_1, s_2)$ such that $\Delta_1(\te^{(0)}) \subseteq A_i$. Similarly, if $A_i$ is infinite with respect to the coordinate $2$, define $\te^{(r+1)}=(s_1, q_2 )$ such that $\Delta_2(\te^{(r+1})) \subseteq A_i$.\\
The following proposition describes the structure of the levels in a well-behaved set, in a good semigroup $S\subseteq \N^2$ (see Figure \ref{Figura 5} for a graphical representation).
 
 \begin{prop}
\label{structurelevel}
Let $A \subseteq S \subseteq \mathbb{N}^2$ be a well-behaved set. Let $\te^{(k)}$ for $k=0, \ldots, r+1$ be defined as above for a fixed level $A_i$.
Let $\al=(a_1,a_2) \in A_i$. Then one of the following assertions holds:
\begin{itemize}
    \item[i)] $\al \in \Delta_1(\te^{(0)}) \cup \Delta_2(\te^{(r+1)})$.
    \item[ii)] There exists $k \in \{0, \ldots, r\}$ such that $ \te^{(k)} \leq \al <  \te^{(k)} \wedge \te^{(k+1)}$.
    \item[iii)] There exists $k \in \{1, \ldots, r+1 \}$ such that $\te^{(k-1)} \wedge \te^{(k)} < \al \leq \te^{(k)}$.
    \item[iv)] $A_i$ is bounded with respect to $1$ and $\te^{(r)} \in \Delta_1^S(\al)$.
    \item[v)] $A_i$ is bounded with respect to $2$ and $\te^{(1)} \in \Delta_2^S(\al)$.
\end{itemize}
In particular $\al$ shares at least one coordinate with some $\te^{(k)}$.
\end{prop}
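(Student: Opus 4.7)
The plan is a case analysis on $\al=(a_1,a_2)\in A_i$, first by whether $\al$ is absolute in $S$, and when it is not, by which of the two directions out of $\al$ still meets $A$. The key tool is the well-behaved hypothesis in the form of Lemma~\ref{2ramiwb}, which in $\N^2$ says exactly that if $\al\in A$ then at least one of $\ds{1}{\al}\cap A$ and $\ds{2}{\al}\cap A$ is empty; otherwise $\al$ would be the complete infimum of two elements of $A$ in the two coordinate directions and well-behavedness would force $\al\in E$.

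If $\al$ is absolute in $S$, then $\al=\te^{(k)}$ for some $k\in\{1,\dots,r\}$ and the conclusion is immediate: case (iii) applies (or (iv)/(v) at an extreme index). If $\al$ is not absolute, up to swapping the two coordinates I may assume $\ds{2}{\al}\cap A=\emptyset$ and $\ds{1}{\al}\neq\emptyset$ (the degenerate subcase $\ds{1}{\al}=\emptyset$ is handled symmetrically by swapping the roles of the two coordinates). I would then walk upward along the vertical line $\{(a_1,y):y\ge a_2\}\cap S$, producing the maximal sequence of consecutive elements $\al=\be^{(0)}<\be^{(1)}<\cdots<\be^{(t)}$ of that line that lie in $A$.

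The core sub-claim is that every $\be^{(j)}$ belongs to the single level $A_i$, which I would prove by induction on $j$. At each step $\be^{(j+1)}\in\ds{1}{\be^{(j)}}$ is consecutive to $\be^{(j)}$ in $S$; either $\tdE{2}{\be^{(j)}}\neq\emptyset$, so Theorem~\ref{neri} applied at $\be^{(j)}$ forces $\be^{(j+1)}$ to lie in the same level as $\be^{(j)}$; or $\tdE{2}{\be^{(j)}}=\emptyset$, in which case Theorem~\ref{bianchi} combined with Lemma~\ref{wellbehavedlem1} (applied to suitable $\Delta^S_F$-strips transversal to the vertical chain, which by well-behavedness lie entirely inside a single level of $A$) still keeps $\be^{(j+1)}$ at level $i$. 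The well-behaved hypothesis is the essential ingredient preventing a level jump inside a vertical chain of elements of $A$.

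Once this chain is understood, its top $\be=\be^{(t)}$ is either absolute in $S$ --- in which case $\be=\te^{(k)}$ for some $k$, so $a_1=\te^{(k)}_1$, and a direct coordinate inspection together with the ordering $\te^{(k-1)}_1<\te^{(k)}_1<\te^{(k+1)}_1$ places $\al$ in the appropriate one of (ii), (iii), (iv) or (v) according to which of $\te^{(k-1)}$, $\te^{(k+1)}$, $\te^{(0)}$, $\te^{(r+1)}$ are defined --- or $\be$ is not absolute, in which case every element of $\ds{1}{\be}$ lies in $E$ by maximality of $t$, and the conductor of $E$ together with the good-ideal property of $E$ forces $a_1=q_1$, so $\te^{(0)}$ is defined and $\al\in\Delta_1(\te^{(0)})$, giving case (i). In every sub-case $\al$ shares a coordinate with one of the $\te^{(k)}$, which is the final assertion. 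The main obstacle I anticipate is the inductive level-preserving step along the vertical chain: controlling the interaction between $E$ and $A$ on a full line requires combined use of Theorems~\ref{bianchi} and~\ref{neri} against the well-behaved hypothesis, in the spirit of the proof of Lemma~\ref{2ramiwb}.
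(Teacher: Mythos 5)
Your opening reduction is based on a misreading of the well-behaved condition, and this breaks the whole strategy. Definition \ref{wellbehaved} (and its $d=2$ paraphrase) forces $\al\in E$ only when the \emph{entire} sets $\widetilde{\Delta}^S_{G_j}(\al)$ are contained in $A$; Lemma \ref{2ramiwb}(2) likewise only speaks of pairs lying in the \emph{same} level. It is therefore false that for $\al\in A$ at least one of $\ds{1}{\al}\cap A$ and $\ds{2}{\al}\cap A$ is empty: the correct dichotomy is that either $\al$ is absolute or one of the two lines meets $E$ (and then, by Lemma \ref{minimidelta}.\ref{osssem0}, the other line is entirely in $A$). A line that meets $E$ can perfectly well also contain elements of $A$ in higher levels; in Figure \ref{Figura 4} the element $(9,9)\in A_3$ has $(10,9)\in A_3$ on its horizontal line and $(9,15)\in A_4$ on its vertical line, with $(9,12)\in E$ in between. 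The same example kills your terminal step: the maximal chain of consecutive $A$-elements going up from $(9,9)$ is just $\{(9,9)\}$, its top is not absolute, and maximality of the chain only says that the \emph{next} element of the line is in $E$ — not that the whole half-line $\ds{1}{\be}$ is in $E$ — so the conclusion ``$a_1=q_1$, hence case (i)'' does not follow. The intermediate sub-claim that a consecutive chain of $A$-elements on a vertical line stays in one level is also only asserted: in the case $\tdE{2}{\be^{(j)}}=\emptyset$ you cannot invoke Lemma \ref{wellbehavedlem1}, because the relevant line is not contained in $A$, and Theorem \ref{bianchi} by itself gives an inequality in the wrong direction for what you need.

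The paper's proof goes a different and shorter way, and you may want to compare. After disposing of the elements beyond the conductor (case (i), via \cite[Lemma 1]{DGM}), it sandwiches $a_1$ between the first coordinates of two consecutive absolute elements $\te^{(k)},\te^{(k+1)}$ of $A_i$; the $\ll$-incomparability of distinct elements of a level then sandwiches $a_2$ between their second coordinates, and Lemma \ref{2ramiwb} gives $\te^{(k)}\wedge\te^{(k+1)}\in E$. The only thing left to exclude is $\al\gg\te^{(k)}\wedge\te^{(k+1)}$, which is done by taking $\al$ maximal with that property and observing that then either $\Delta^E(\al)\neq\emptyset$ (contradicting maximality via Theorem \ref{neri}), or $\Delta^S(\al)\subseteq A$ is non-empty (contradicting well-behavedness), or $\al$ is a new absolute element (contradicting the list of the $\te^{(k)}$). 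If you want to salvage your approach you would need to replace the vertical-walk by this kind of coordinate comparison against the absolute elements of the level, since the walk generally terminates at a non-absolute element nowhere near the conductor.
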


\begin{figure}[H]
\centering
	\pgfplotsset{ticks=none}
\tikzset{mark size=1}
\begin{tikzpicture}[scale=1]
	\begin{axis}[ xmin=0, ymin=0, xmax=20, ymax=20, ]


  \addplot[only marks, mark=x,mark options={scale=2}] coordinates{(2,13)(6,8)(12,5)};
  \addplot[only marks, mark=o,mark options={scale=2}] coordinates{(2,14)(2,15)(2,17)(2,18)(2,19)};
  \addplot[only marks, mark=o,mark options={scale=2}] coordinates{(3,13)(5,13)};
  \addplot[only marks, mark=o,mark options={scale=2}] coordinates{(6,10)(6,11)};
  \addplot[only marks, mark=o,mark options={scale=2}] coordinates{(7,8)(9,8)};   
  \addplot[only marks, mark=o,mark options={scale=2}] coordinates{(12,6)}; 
  \addplot[only marks, mark=o,mark options={scale=2}] coordinates{(12,7)(14,5)(16,5)};  
  \addplot[only marks, mark=o,mark options={scale=2}] coordinates{(17,3)};  
        \addplot[only marks, mark=*,mark options={scale=1.5}] coordinates{(2,16)(6,13)(12,8)(17,5)};
	\node[font=\footnotesize] at (3.2,16.8) {$\bs{\theta^{(0)}}$};
	\node[font=\footnotesize] at (7,13.8) {$\bs{\theta^{(1)}}$};
    \node[font=\footnotesize] at (13,8.8) {$\bs{\theta^{(2)}}$};
    \node[font=\footnotesize] at (18,5.8) {$\bs{\theta^{(3)}}$};
\end{axis}
\end{tikzpicture}
\caption{\footnotesize In this figure is represented the level $A_i$ of a well-behaved set $A$ in a good semigroup $S\subseteq \N^2$, as described by Proposition \ref{structurelevel}. The elements $\{\theta^{(0)},\theta^{(1)},\theta^{(2)},\theta^{(3)}\}$ are denoted by $\bullet$, all other elements of $A_i$ are denoted by $\circ$. The elements marked by $\times$ are not in the level $A_i$. } 
\label{Figura 5}
\end{figure}
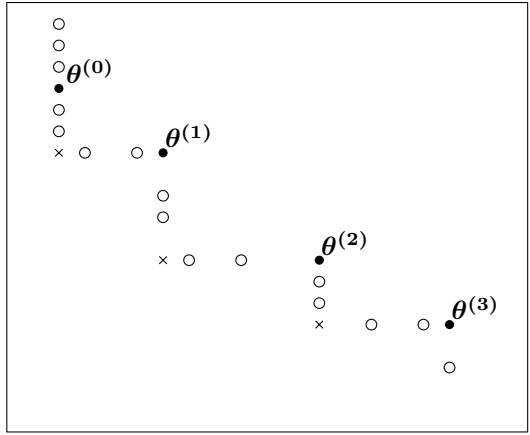

\begin{proof}
If $a_h > q_h$ for some $h=1,2$, we are necessarily in the situation described in the first item (see \cite[Lemma 1, items 7-8]{DGM}).
Thus we can restrict to assume $a_h < q_h$ in both coordinates and $\al \neq \te^{(k)}$ for every $k$. First consider the case $\theta^{(k)}_1 \leq a_1 < \theta^{(k+1)}_1$. Since two distinct elements of the same level are incomparable with respect to the order relation $\ll$, we must have $\theta^{(k)}_2 \geq a_2 \geq \theta^{(k+1)}_2$. Furthermore, since $A$ is well-behaved, by Lemma \ref{2ramiwb}, $\de:= \te^{(k+1)} \wedge \te^{(k)} \in E $. We only have to exclude the case $\al \gg \de$. For this we can clearly suppose $\al$ to be a maximal element in $A_i$ such that $\al \gg \de$. Indeed, since $\te^{(k+1)} , \te^{(k)}$ are absolute elements in $A_i$, there cannot exist infinitely many element $\al \in A_i$ such that $\al \gg \de$. But $\al$ cannot be another absolute element in $A_i$. Thus $\Delta^S(\al) \neq \emptyset$. If $\Delta^E(\al) \neq \emptyset$ we contradict the maximality of $\al$ using Theorem \ref{neri}. If instead $\Delta^E(\al) = \emptyset$, we contradict the hypothesis of having $A$ well-behaved. 

The only case we still need to discuss is when $a_1 < \theta^{(1)}_1$ and $A_i$ is bounded with respect to the second coordinate (the other cases are obtained by analogy switching the coordinates). Also in this case, if $a_2 > \theta^{(1)}_2$, since $A_i$ is bounded, we may say that $\al$ is maximal with respect to satisfy such property. The same argument as above forces $\al$ to be a new absolute element, which is a contradiction.
\end{proof}

\section{An application to plane curves}



In this section we prove a result for value semigroups of plane curves with two branches which extends \cite[Theorem 4.1]{apery} in the non-local case and provides an alternative method of reconstructing the value semigroup from the multiplicity sequence of the curve. 


The setting is the following.
Let $S=v(\mathcal{O}) \subseteq S_1 \times S_2$ be the local value semigroup of a plane curve and $S_1$ and $ S_2$ be its numerical projections. Clearly $S_1$ and $ S_2$ are value semigroups of plane branches.
 Let $\e=(e_1, e_2)$ be the minimal nonzero element of $S$ and let $A= \bigcup_{i=1}^e A_i$ be the Ap\'ery set with respect to $\e$, where $e=e_1+e_2$. 

Let $S'$ be the value semigroup of the blow up of $\mathcal{O}$ and suppose that $S'$ is not local. 
In this case $S' = S_1'\times S_2'$ and where $S_1'$ and $S_2'$ are the respective blowups of $S_1$ and $S_2$.

We recall that all the above semigroups 
are value semigroups of some plane curve, hence they are symmetric and their Ap\'ery sets are well-behaved as consequence of Proposition \ref{curvepianesonowellbehaved}.

By Ap\'ery's Theorem in the case of plane branches,
 after setting $\Ap(S_1)= \{u_{1}, u_{2}, \ldots, u_{e_1}\}$ and $\Ap(S_2)= \{v_{1}, v_{2}, \ldots, v_{e_2}\}$, we get $$\Ap(S_1', e_1)= \{u_{1}, u_{2}-e_1, u_3-2e_1, \ldots, u_{e_1}-(e_1-1)e_1\},$$ $$\Ap(S_2', e_2)= \{v_{1}, v_{2}-e_2, v_3-2e_2, \ldots, v_{e_2}-(e_2-1)e_2 \}.$$
Let $A'= \bigcup_{i=1}^E A_i'$ be the Ap\'ery set of $S'$ with respect to $\e$.
The levels of the Ap\'ery set of $S'$ are described in Theorem \ref{thmnonlocallevels} and Corollary \ref{aperylocale}. 
We aim to prove next theorem:
\begin{theorem}
\label{teoremaaperynonlocale}
Let $S=v(\mathcal{O}) \subseteq S_1 \times S_2$ be a local value semigroup of a plane curve and suppose $S'= S_1'\times S_2'$ to be not local. For every $i=1, \ldots, e$ we have $A_i = A_i'+ (i-1)\e$. \end{theorem}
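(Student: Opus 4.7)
The plan is to establish both inclusions $A_i' + (i-1)\e \subseteq A_i$ and $A_i \subseteq A_i' + (i-1)\e$, combining an explicit description of $\Ap(S', \e)$ with the well-behaved structure of $\Ap(S)$. Indeed, Ap\'ery's classical theorem applied to each plane branch gives $\Ap(S_j', e_j) = \{u_l - (l-1)e_1\}_l$ and $\{v_l - (l-1)e_2\}_l$. Plugging these into Corollary \ref{aperylocale}, each level $A_i'$ of $\Ap(S', \e)$ becomes a union, indexed by pairs $(p, q)$ with $p + q = i$, of horizontal and vertical segments in $S'$ anchored at $(u_p - (p-1)e_1, v_q - (q-1)e_2)$. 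After translating by $(i-1)\e$, these anchors become $(u_p + q e_1, v_q + p e_2)$, and the segment endpoints acquire similarly explicit forms.

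For the forward inclusion $A_i' + (i-1)\e \subseteq A_i$, I would fix $\omega' \in A_i'$, set $\omega := \omega' + (i-1)\e$, and verify in turn that $\omega \in S$, $\omega \notin \e + S$, and $\omega \in A_i$. Membership in $S$ is the crux: although the coordinates of $\omega$ clearly lie in $S_1$ and $S_2$, this does not automatically yield $\omega \in S$, since in general $S \subsetneq S_1 \times S_2$. Here I would invoke the well-behavedness of $\Ap(S)$ from Proposition \ref{curvepianesonowellbehaved}: by Corollary \ref{wellbehavedproj} applied to both projections, the sets $\Delta_1^S(u_p, -1)$ and $\Delta_2^S(-1, v_q)$ lie entirely in levels $A_p$ and $A_q$ respectively and contain elements of $S$ with arbitrarily large orthogonal coordinate; combined with $\e \in S$ and property (G2), these anchors generate the required elements of $S$ on the translated segments. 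Non-membership in $\e + S$ follows because $\omega - \e$ corresponds under the shift to $\omega' - \e \notin S'$ (since $\omega' \in \Ap(S', \e)$), and the numerical Ap\'ery theorem for each projection transports this property back to $S$. Finally, the level assignment $\omega \in A_i$ is proved by induction on $i$, using Theorems \ref{bianchi} and \ref{neri} to track how levels change when passing between consecutive elements across the shift.

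For the reverse inclusion $A_i \subseteq A_i' + (i-1)\e$, I would use the structural description of levels in well-behaved sets from Proposition \ref{structurelevel}: every element of $A_i$ shares a coordinate with an absolute element of $A_i$. In the plane-curve setting, these absolute elements correspond precisely to the translated boundary points arising from Corollary \ref{aperylocale}, so that $\omega - (i-1)\e$ falls into the appropriate segment of $A_i'$. Corollary \ref{wellbehavedproj} then controls the coordinate ranges, matching $\omega - (i-1)\e$ to the explicit description of $A_i'$ from the first paragraph.

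The main obstacle is the membership-in-$S$ step of the forward inclusion. Because $S \subsetneq S_1 \times S_2$ is generally strict, one must leverage the plane-curve hypothesis (through well-behavedness and the anchoring results) to verify that every translated segment element genuinely lies in $S$. A careful case analysis is required to distinguish the bounded segments (where $p < e_1$ and $q < e_2$) from the infinite half-lines (where $p = e_1$ or $q = e_2$); in the latter, elements extend to infinity and the combinatorial bookkeeping becomes more delicate, requiring separate verification using the structure of $\Ap(S)$ at infinity and the well-behaved level description from Proposition \ref{structurelevel}.
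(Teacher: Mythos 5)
Your overall skeleton --- two inclusions, translation by $(i-1)\e$, use of the non-local description of $\Ap(S',\e)$ and of well-behavedness --- matches the paper's strategy, but there is a genuine gap exactly at the point you yourself flag as the crux: membership of the translated elements in $S$. The mechanism you propose (anchoring via Corollary \ref{wellbehavedproj} and ``generating'' the needed points with property (G2)) does not work as stated: (G2) produces \emph{some} element in a prescribed direction, not a prescribed point, and for a generic pair $(a_1,a_2)$ with $a_1\in S_1$, $a_2\in S_2$ one has $(a_1,a_2)\notin S$. The paper closes this gap with a separate, long inductive argument (Proposition \ref{absoluteelements}) that classifies the absolute elements of each level $A_i$ as exactly the translates $\om_{j,k}+(i-1)\e$ with $j+k-1=i$; the proof that each such point actually lies in $S$ is by contradiction, playing putative smaller maximal elements against absolute elements of $\e+S$, Lemma \ref{lemminofacile}, and (for levels beyond $e_1,e_2$) the duality of levels. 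Nothing in your sketch supplies this classification, and without it neither inclusion can get started.

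The second missing ingredient is the arithmetic of plane-branch Ap\'ery sets (Remark \ref{planebranch}), in particular $u_h-u_{h-1}>e_1$ and $u_i>(i-1)e_1$, together with the symmetry relation $u_{e_1}=u_j+u_{e_1-j+1}$. The paper needs these at several places you gloss over: to identify the coordinates of the infinite vertical and horizontal lines of each $A_i$ (via duality with respect to $\gamma_1+e_1$), and, in both inclusions for non-absolute elements, to run the level-counting contradictions --- for instance, showing that if $\be+(i-1)\e\notin S$ then some absolute element $\te$ below it would satisfy $t_1-(p-1)e_1\notin\Ap(S_1',e_1)$ for its level $p<i$. Your reverse inclusion also relies on the unproved claim that the absolute elements of $A_i$ ``correspond precisely to the translated boundary points''; that is again the content of Proposition \ref{absoluteelements} and is not a consequence of Proposition \ref{structurelevel} alone. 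In short: the architecture is right, but the two load-bearing steps (the level-by-level classification of absolute elements, and the plane-branch numerics that force the translated points into $S$ and into the correct level) are absent, and the substitute arguments you offer for them would fail.
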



Before the proof, we need to discuss some other result. First, since we are dealing also with numerical value semigroups of plane branches, we recall their properties (see for instance \cite[Definition 1.3]{DMS}).

\begin{oss}
\label{planebranch}
Let $S$ be a numerical semigroup minimally generated by $g_1, \ldots, g_n$. For $i=2, \ldots, n$ define $\tau_i$ to be the minimal positive integer $h$ such that $(h+1)g_i \in \langle g_1, \ldots, g_{i-1} \rangle$. Let $\Ap(S)$ be the Ap\'{e}ry set of $S$ with respect to the minimal nonzero element $e= g_1$.

The semigroup $S$ is the value semigroup of a plane branch if $$\Ap(S)= \lbrace \sum_{i=2}^n \lambda_ig_i | 0 \leq \lambda_i \leq \tau_i   \rbrace$$ and $(\tau_i+1)g_i < g_{i+1}$ for every $i=2, \ldots, n-1$.
It can be observed that if $S$ is the value semigroup of a plane branch and $\Ap(S)=\{\omega_1,\ldots,\omega_{e}\}$, then $\omega_i > (i-1)e$ and $\omega_i- \omega_{i-1} > e$. 
\end{oss}

 By \cite[Proposition 2.2.c]{apery} the fact that $S'$ is not local is equivalent to say that $\Delta^S(\e) \neq \emptyset$. This implies the following property.

\begin{lemma}
\label{lemminofacile}
Let $S \subseteq \mathbb{N}^2$ be a symmetric local good semigroup such that $\Delta^S(\e) \neq \emptyset$. Then all the absolute elements of $S$ are in $A:=\Ap(S)$.
\end{lemma}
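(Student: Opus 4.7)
The plan is to derive a direct contradiction rather than using the symmetric hypothesis: I suspect symmetry is listed because the lemma is stated inside a context where it is always true (value semigroups of plane curves), but the conclusion follows from a short semigroup-closure argument that only needs $\Delta^S(\e)\neq\emptyset$ and locality.

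First I would take an absolute element $\al\in S$ and ask whether $\al-\e\in\mathbb{N}^2$. If some coordinate of $\al-\e$ is negative, then $\al\notin \e+S$ automatically, so $\al\in A=S\setminus(\e+S)$ and there is nothing to prove. So the only case is $\al\geq\e$, and we may assume for contradiction that $\al-\e\in S$; set $\be:=\al-\e\in S$.

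Next I would use the hypothesis $\Delta^S(\e)\neq\emptyset$ to pick some $\te\in\Delta^S(\e)$. Without loss of generality $\te\in\Delta_1^S(\e)$, so $\theta_1=e_1$ and $\theta_2>e_2$. Since $S$ is a submonoid we have $\te+\be\in S$. A direct computation gives
\[
(\te+\be)_1 = e_1+\beta_1 = \alpha_1, \qquad (\te+\be)_2 = \theta_2+\beta_2 > e_2+\beta_2 = \alpha_2,
\]
so $\te+\be\in\Delta_1^S(\al)\subseteq\Delta^S(\al)$. This contradicts the assumption that $\al$ is absolute, i.e.\ $\Delta^S(\al)=\emptyset$. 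Thus $\al-\e\notin S$ and $\al\in A$.

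There is no real obstacle to this argument; it is essentially a one-line semigroup translation. The only thing to double-check is the trivial case $\al\not\geq\e$, and that the roles of the two coordinates are symmetric so the alternative $\te\in\Delta_2^S(\e)$ gives the same contradiction with $\Delta_2^S(\al)$. The symmetry of $S$ is not needed for this proof, although it is of course satisfied in the application to plane curves for which the lemma is used.
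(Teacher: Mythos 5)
Your proof is correct, and it takes a genuinely different route from the paper's. The paper argues via symmetry: since $S$ is local and symmetric, $\g \in S$, and $\Delta^S(\e)\neq\emptyset$ forces $\g-\e\notin S$, hence $\g\in A$; then for an absolute $\al$ the symmetry criterion gives that $\g-\al\in S$, and $\al+(\g-\al)=\g\in A$ forces $\al\in A$ (the standard fact that a sum lying in the Ap\'ery set puts both summands there). You instead give a direct translation argument: if $\al=\e+\be$ with $\be\in S$ and $\te\in\Delta^S_1(\e)$, then $\te+\be\in\Delta^S_1(\al)$, contradicting absoluteness. Both are valid; yours is shorter and strictly more general, since it uses only the monoid structure and $\Delta^S(\e)\neq\emptyset$ --- neither symmetry, nor locality, nor the good-semigroup axioms, nor even $d=2$ are needed (for arbitrary $d$ one gets $\te+\be\in\Delta^S_F(\al)$ for the same $F$ with $\te\in\Delta^S_F(\e)$, which still contradicts $\widetilde\Delta^S(\al)=\emptyset$ in the appropriate sense, and works verbatim when $|F|=d-1$). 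What the paper's argument buys is thematic coherence: the facts it isolates --- that $\g\in A$ and that absolute elements pair off under $\al\mapsto\g-\al$ --- are reused immediately afterwards in Proposition~\ref{absoluteelements} and in the proof of Theorem~\ref{teoremaaperynonlocale}, so proving the lemma that way sets up the duality machinery the section relies on. Your observation that the symmetry hypothesis is not needed for this particular conclusion is accurate.
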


\begin{proof}
Since $S$ is local and symmetric, $\g \in S$. Symmetry together with the fact that $\Delta^S(\e) \neq \emptyset$ implies that $\g \in A$. Now if $\al$ is an absolute element of $S$, by symmetry 
$\g - \al \in S$ and is another absolute element. Since $\al + (\g - \al) = \g \in A$, necessarily $\al \in A$.
\end{proof}

Next proposition describes how the absolute elements of $S$ are related to elements in $S'$.
Define the following elements of $S'$. For $j= 1, \ldots, e_1$, $k=1, \ldots, e_2$, let
$$ \om_{j,k} := (u_j-(j-1)e_1, v_k-(k-1)e_2). $$
We show that these elements come from the absolute elements of $S$ after blowup.

\begin{prop}
\label{absoluteelements}
Let $S=v(\mathcal{O}) \subseteq S_1 \times S_2 \subseteq \mathbb{N}^2$ be the local value semigroup of a plane curve and suppose $S'= S_1'\times S_2'$ to be not local. Let $i \in \{1, \ldots, e \}$ Then: 
\begin{itemize}
\item The absolute elements of the level $A_i$ of $\Ap(S)$ are the elements $ \om_{j,k} + (i-1)\e  $ such that $j+k-1=i$.
\item If $i> 1$ and $\al$ is an absolute element of $S$ in the level $A_{i-1}$, then $\Delta^S(\al + \e)$ contains some absolute element of $S$ in the level $A_i$.
\end{itemize}
\end{prop}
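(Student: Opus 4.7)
The plan is to proceed by strong induction on $i$, proving both bullets jointly. The base case $i=1$ is immediate: since $S$ is local, $A_1=\{\bs{0}\}$ and $\bs{0}=\om_{1,1}$ is the unique absolute element, with $(j,k)=(1,1)$ satisfying $j+k-1=1$; bullet 2 is vacuous.

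For the inductive step I fix $i\ge 2$ and assume both bullets hold for all smaller indices. Given an absolute $\al\in A_{i-1}$, bullet 1 at level $i-1$ supplies $\al=\om_{j,k}+(i-2)\e$ for some $j+k=i$, and a direct substitution gives $\al+\e=(u_j+ke_1,\ v_k+je_2)$. The natural candidates for the absolute element of $A_i$ promised by bullet 2 are
\[
\be:=\om_{j,k+1}+(i-1)\e=(u_j+ke_1,\ v_{k+1}+(j-1)e_2)\quad\text{(available when } k+1\le e_2\text{)}
\]
and the symmetric $\be':=\om_{j+1,k}+(i-1)\e$ (when $j+1\le e_1$). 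By Remark \ref{planebranch} applied to $S_2$, $v_{k+1}-v_k>e_2$, so $\be\in\Delta_1^{\N^2}(\al+\e)$ and analogously $\be'\in\Delta_2^{\N^2}(\al+\e)$. The remaining content of bullet 2 is to verify that at least one of these candidates actually lies in $S$, belongs to $\Delta^S(\al+\e)\cap A_i$, and is absolute. Membership in $\Ap(S,\e)$, namely $\be-\e\notin S$, is immediate when $j=1$ since then $(\be-\e)_2=v_{k+1}-e_2\notin S_2$ by definition of $\Ap(S_2,e_2)$; for $j\ge 2$ it will follow by comparing $\be-\e$ with $\al$, using the well-behavedness of $\Ap(S,\e)$ (Proposition \ref{curvepianesonowellbehaved}) together with the structure theorem for levels (Proposition \ref{structurelevel}). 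Absoluteness of $\be$ in $A_i$ is then read off from Proposition \ref{structurelevel}.

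For bullet 1 the inclusion is immediate from the construction above. For the converse, given an absolute $\tau\in A_i$, Proposition \ref{livelloinferiore} furnishes $\al\in A_{i-1}$ with $\al<\tau$; combining Lemma \ref{tuttedirezioni} with Proposition \ref{structurelevel} and the well-behavedness of $\Ap(S,\e)$ lets me promote $\al$ to an absolute element of $A_{i-1}$, after which the bullet-2 construction (run in reverse) forces $\tau$ into the prescribed form.

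The hard part will be the verification $\be\in S$ itself. In the fully local blowup case of \cite{apery} this step relies on Garc\'ia's description of the value semigroup of a 2-branch plane curve from its numerical projections, but the explicit purpose here is to bypass Garc\'ia. My approach would instead combine the symmetry of $S$ (exploited via the duality Theorem \ref{dualmain}), the well-behaved structure of $\Ap(S,\e)$ encoded in Proposition \ref{structurelevel}, and the product description of $\Ap(S',\e)$ furnished by Theorem \ref{thmnonlocallevels}: the absolute slot in $A_i$ predicted by Proposition \ref{structurelevel} must be filled by the element of $S$ lying over $\om_{j,k+1}\in\Ap(S_1',e_1)\times\Ap(S_2',e_2)\subseteq A'$ after the shift by $(i-1)\e$, and the delicate combinatorial bookkeeping needed to make this transfer rigorous using only the tools above will be the main technical obstacle.
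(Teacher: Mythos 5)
Your proposal is a strategy outline rather than a proof: you yourself flag the verification that $\be=\om_{j,k}+(i-1)\e$ actually lies in $S$, lands in $A_i$, and is absolute as ``the main technical obstacle,'' and you never carry it out. That verification is essentially the entire content of the proposition --- everything else (the base case, the arithmetic showing $\be\in\Delta_1^{\N^2}(\al+\e)$ via Remark \ref{planebranch}) is routine. The paper's proof spends almost all of its length on exactly this point, arguing by contradiction inside $S$: if $\be\notin S$, or if $\Delta_2^S(0,v_i)$ contained a smaller absolute element $\he^1$, one produces (via well-behavedness, Lemma \ref{2ramiwb} and property (G2)) an absolute element $\he$ of $E$ with $\he^1\in\Delta_1^S(\he)$, shows that $\he-\e$ sits in a level $A_p$ with $p<i-1$, and then invokes the \emph{second} bullet at level $p$ to contradict $\he^1\in A_i$ --- this is where the two bullets genuinely interlock, and it is absent from your sketch. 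Moreover the sub-case $i>e_1,e_2$, where no element $\om_{i,1}$ or $\om_{1,i}$ is available and the argument must pass through the duality of levels (Proposition \ref{dualityprop2}) and the finiteness criterion of \cite[Theorem 5]{DGM}, is not addressed at all.

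Two of the tools you propose for closing the gap would not work as stated. First, you want to ``read off'' absoluteness of $\be$ from Proposition \ref{structurelevel}; but that proposition describes a level \emph{in terms of} its absolute elements, which are exactly what is being determined here, so the appeal is circular. Second, you want to use the product description of $\Ap(S')$ from Theorem \ref{thmnonlocallevels} to argue that the ``absolute slot'' in $A_i$ must be filled by the lift of $\om_{j,k+1}$; but the correspondence between elements of $\Ap(S')$ and elements of $\Ap(S)$ shifted by $(i-1)\e$ is the conclusion of Theorem \ref{teoremaaperynonlocale}, which is proved \emph{from} this proposition, so that route is also circular --- the paper deliberately works only inside $S$, using Lemma \ref{lemminofacile}, Corollaries \ref{wellbehavedcor} and \ref{wellbehavedproj}, and the classical numerical Ap\'ery theorem for the branches. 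A similar concern applies to your converse direction: bullet 2 only asserts that $\Delta^S(\al+\e)$ contains \emph{some} absolute element of $A_i$, so ``running it in reverse'' does not show that every absolute element of $A_i$ has the prescribed form; the paper instead rules out an extra absolute element directly, using well-behavedness when it dominates some $\te^{(k)}\wedge\te^{(k+1)}$ and a minimality argument on the extremal coordinate otherwise.
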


\begin{proof}
Since $ \om_{1,1} + (1-1)\e = \boldsymbol{0} \in A_1 $, by induction we can assume the thesis true for all the levels indexed by numbers smaller than $i$. Take $j,k$ such that 
$j+k-1=i \geq 2$ and call $\be:= \om_{j,k}+ (i-1)\e$. 
First consider the case $j=1, k \geq 2$, see Figure \ref{Figura 6} (the case $k=1, j \geq 2$ is analogous). By inductive hypothesis $ \om_{1,k-1} + (i-2)\e  $ is an absolute element of $S$ in the level $A_{i-1}$. Hence $ \de:= \om_{1,k-1} + (i-1)\e  $ is an absolute element of $E$ and $\be \in \Delta_1(\de) $.

\begin{figure}[H]
 \begin{minipage}{.5\textwidth}
\includegraphics[scale=0.5]{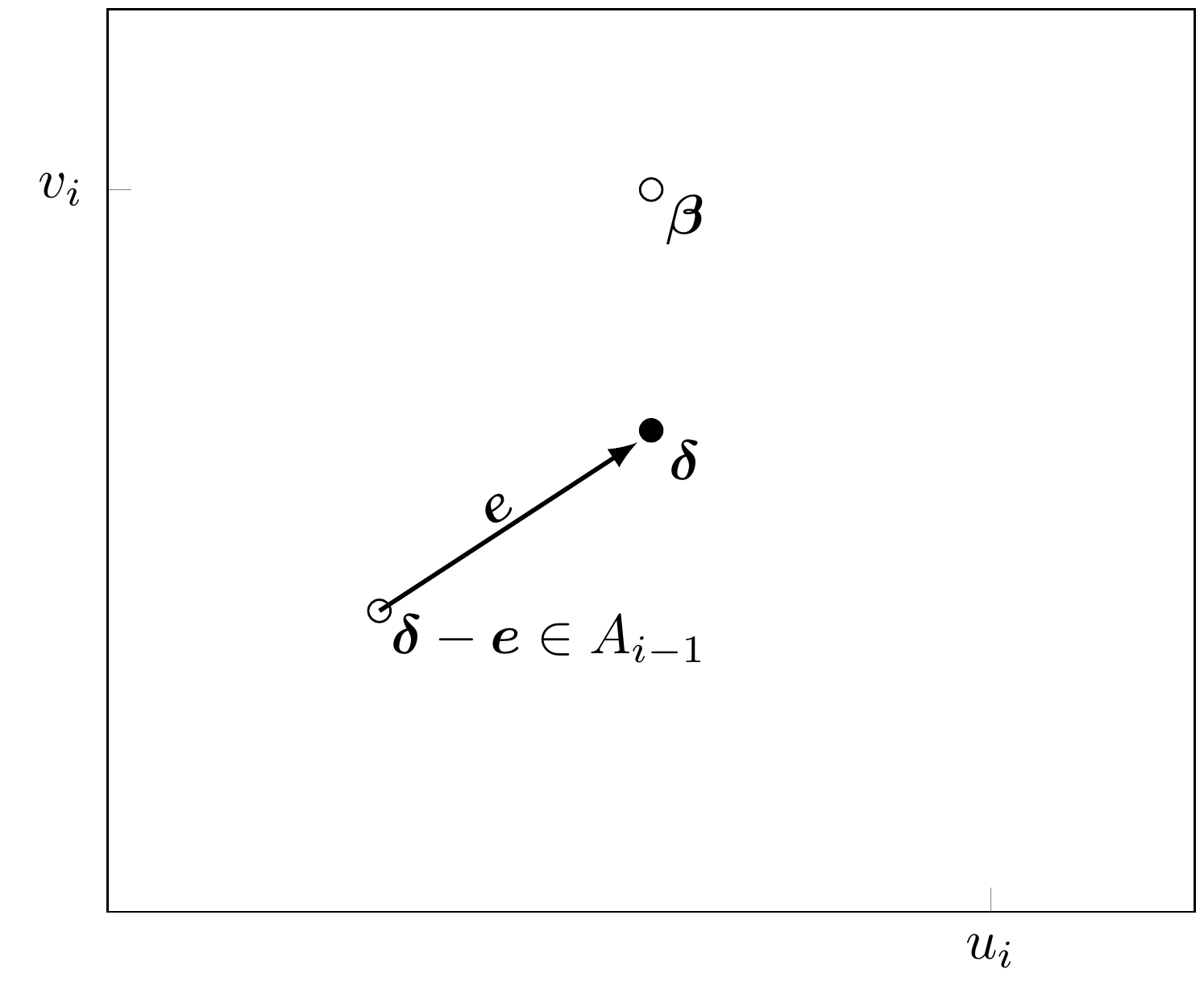}
\caption{\footnotesize In this figure $\bs{\delta}-\bs{e}$ is an absolute element\newline
in $A_{i-1}$, $\bs{\delta}$ is an absolute element in $S+\bs{e}$}
\label{Figura 6}
\end{minipage}
 \begin{minipage}{.5\textwidth}
\includegraphics[scale=0.5]{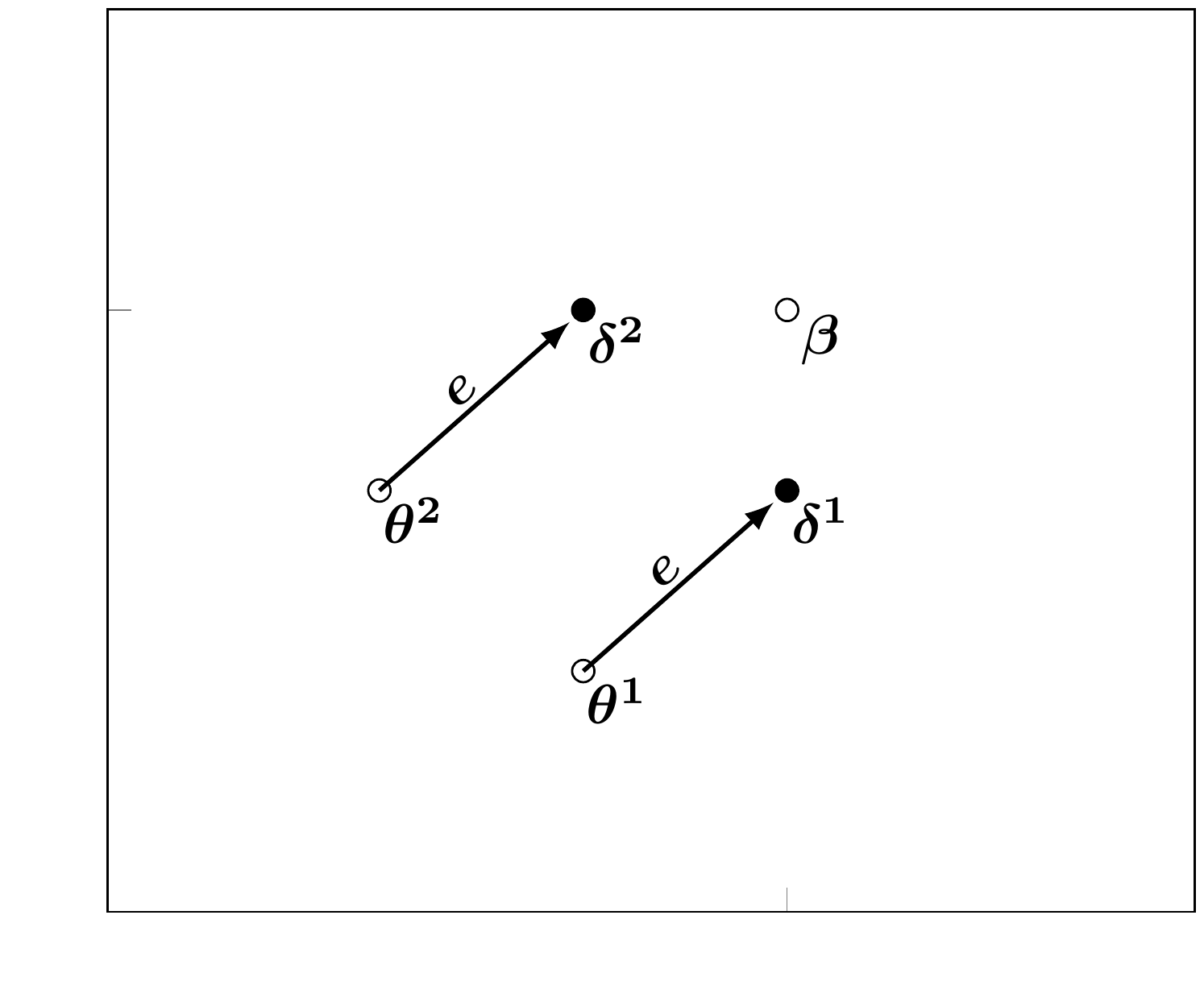}
\caption{\footnotesize In this figure $\bs{\theta}^1,\bs{\theta}^2$ are absolute elements in $A_{i-1}$, $\bs{\delta}^1,\bs{\delta}^2$ are absolute elements in $S+\bs{e}$}
\label{Figura 7}
\end{minipage}
 \end{figure}

By Lemma \ref{lemminofacile}, an absolute element of $E$ cannot be an absolute element of $S$, therefore $\Delta^S(\de)$ is non-empty and contained in $A$. In particular, by Corollary \ref{wellbehavedcor} it is all contained in the same level, say $A_h$ with $h \geq i$. Observe that in this case, since $i=k$, $$\be = (u_1+(i-1)e_1, v_k-(k-1)e_2+(i-1)e_2)= ((i-1)e_1, v_i) \in \Delta_2(0, v_i).$$
By Corollary \ref{wellbehavedproj}, $\Delta^S_2(0, v_i) \subseteq A_i$ and it is non-empty. Inductively, we can assume also that $\Delta^S_2(0, v_p)$ contains only one element for every $p < i$ (this is obviously true for $p=1$). We aim to show that the only element in $\Delta^S_2(0, v_i)$ is $\be,$ proving that indeed it is an absolute element of $S$ in $A_i$. 

Suppose there exists a maximal (thus absolute) element $\he^1 \in \Delta^S_2(0, v_i)$, $\he^1 < \be $. Hence, since $S_1$ is a plane branch semigroup, $\eta^1_1 < (i-1)e_1 < u_i$. Now, again by Corollary \ref{wellbehavedproj}, also the set $\Delta_1^S(u_i,0) \subseteq A_i$ and it is non-empty. Therefore it must contain an absolute element $\he^2$, otherwise we would find elements in $A_i$ dominating $\he^1$. Recalling that $A$ is well-behaved and using Lemma \ref{2ramiwb}, necessarily $ \he^1 \wedge \he^2 \in E$. We can then find an element $\he \in E$, such that $\Delta^S(\he) \subseteq A$, and $\he^1 \in \Delta^S_1(\he)$ (such element exists by property (G2) of $E$). 
It follows that $\he - \e$ is an absolute element of $S$. By Lemma \ref{lemminofacile}, we can say that $\he - \e \in A_p$ with $p < i$. 
Since, by induction, $ \de - \e = \om_{1,k-1} + (i-2)\e  $ is the only element in $\Delta^S_2(0, v_{i-1})$, necessarily 
$\he- \e \ll \de - \e$. 
Indeed, the first coordinate is strictly smaller by assumption and the second one must be smaller, otherwise property (G1) would contradict the uniqueness of $\de - \e$ on its horizontal line.

Hence $p< i-1$. If $i=2$ this is impossible. Otherwise, by inductive hypothesis on the second statement of the theorem, this implies that $\Delta^S(\he)$ contains some absolute element in the level $p+1$.
 This is also impossible since $\he^1 \in \Delta^S_1(\he) \cap A_i$. Hence we cannot have $\he^1 < \be$.
 
 Suppose either $\he^1 > \be $ or there are infinitely many elements in $\Delta^S_2(0, v_i)$. If $\be \in S$, then automatically $\be \in A$ and we get a contradiction since $\Delta^S(\be) \subseteq \Delta^S_1(\de) \cup \Delta^S_2(0, v_i) \subseteq A$ and $A$ is well-behaved.
 If $\be \not \in S$, there would exist an element in $\Delta_2(0, v_i) \cap A_i$ dominating the maximal element in $\Delta_1(\de)$, which is in the level $A_h$ with $h \geq i$. Again a contradiction. This proves $\be \in A_i$, $\Delta^S(\be)= \emptyset$ and $\be$ is the only element in $\Delta^S_2(0, v_i)$.


We deal now with the case $j,k \geq 2$ (see Figure \ref{Figura 7}). Similarly as before, we start by the inductive hypothesis that $\te^1:= \om_{j,k-1} + (i-2)\e  $ and $\te^2:= \om_{j-1,k} + (i-2)\e  $ are absolute elements of $S$ in the level $A_{i-1}$. Hence $ \de^1:= \om_{j,k-1} + (i-1)\e  $ and $ \de^2:= \om_{j-1,k} + (i-1)\e  $ are absolute elements of $E$ and $\be \in \Delta_1(\de^1) \cap \Delta_2(\de^2).$
Again $\Delta^S_1(\de^1),\Delta^S_2(\de^2) \neq \emptyset$ and they are contained in $A$. This, together with $A$ well-behaved, 
shows that, if $\be \in S$, then it is absolute.

Suppose now that $i= j+k-1 \leq e_2$ (or analogously $i \leq e_1$). Starting by
$\om_{1,i}+ (i-1)\e$, which has been treated previously, we can suppose working by induction on $j$ that we already proved that $\om_{j-1,k+1}+ (i-1)\e$ is an absolute element and it is in $A_i$. Since $\om_{j-1,k+1}+ (i-1)\e \in \Delta^S_1(\de^2)$, by 
Corollary \ref{wellbehavedcor}, we get $\Delta^S_2(\de^2) \subseteq A_i$. Hence, if $\be \in S$, then it is in $A_i$.

As before, suppose $\be \not \in S$ and there exists an absolute element $\he^1 \in \Delta^S_2(\de^2) \subseteq A_i$ and $\he^1 < \be$. If also $e_1 \geq i$, by the previous case $\he^2 := \om_{i,1}+ (i-1)\e \in A_i$ is an absolute element and $\eta^2_1 > \beta_1$.
If instead $i > e_1$, there exists an element $\he^2 \in A_i$ such that $\Delta_2(\he^2)$ is an infinite horizontal line in $A_i$ (for this see \cite[Lemma 1, items 7-8 and Theorem 5]{DGM}). In both case $\he^1 \wedge \he^2 \in E$ and, as in the previous case, we can find an absolute element of $E$, called $\he$, such that $\he^1 \in \Delta^S_1(\he)$. This yields to a contradiction. Indeed $\he - \e = (s_1, s_2)$ is an absolute element of $S$ in a level $A_p$ with $p < i$. But, exactly as in the previous case we cannot have $p< i-1$. Thus $p=i-1$, but $\theta^2_1 < s_1 <\theta^1_1$, $s_2 < \theta^2_2$, and this contradicts the inductive hypothesis describing all the absolute elements of $A_{i-1}$. Concluding as in the case $j=1$, we get $\be \in A_i$ is absolute.

The remaining case has the assumption $i > e_1,e_2$. Following the same process above, if we could say that $\Delta^S_2(\de^2) \subseteq A_i$, we would be able to conclude exactly in the same way. Call again $\he^1$ the maximal element in $\Delta^S_2(\de^2)$. Say that $\he^1 \in A_h$ with $h \geq i$. We use now the duality property of the levels of $A$ (see \cite[Proposition 3.10]{apery} or use Proposition \ref{dualityprop2} together with $A$ well-behaved). 
Thus, $\g + \e - \de^2 \in \Delta^S(\g +\e - \he^1) \subseteq A_{e-h+1}$ and it is an absolute element, since $\de^2$ is absolute in $E$ and $S$ is symmetric.
Hence $(\g + \e - \de^2) + \e$ is an absolute element of $E$. Since $e-h+1 \leq e-i+1< \min\{e_1+1, e_2+1\}$, by \cite[Theorem 5]{DGM} $A_{e-h+1}$ is a finite level. All the finite levels can be considered in the previous cases, therefore $\Delta^S(\g + 2\e - \de^2) \subseteq A_{e-h+2}$. But at the same time $\g + 2\e - \de^2= \g + \e - (\de^2 - \e)$ and $\de^2- \e $ is in $A_{i-1}$. Thus $\Delta^S(\g + 2\e - \de^2) \subseteq A_{e-i+2}$ implying $h=i$.

The last thing to do is proving by induction that the elements $ \om_{j,k} + (i-1)\e  $ such that $j+k-1=i$ are the only absolute elements in the level $A_i$. Say that $\al$ is another absolute element in the level $A_i$. If there exist $\te^1= \om_{j,k}+(i-1)\e$ and $\te^2= \om_{j-1,k+1}+(i-1)\e$ such that $\al \gg (\te^1 \wedge \te^2)$ we easily get a contradiction using property (G1) since, following our construction, $\Delta^S(\te^1 \wedge \te^2) \subseteq A_i$ and $A$ is well-behaved.
The other possible situation arises if $\al$ is minimal among the absolute elements of $A_i$ with respect to one coordinate, say the first one. 
Let $\te$ be the absolute element of type $\om_{j,k}+(i-1)\e$ minimal with respect to the first coordinate. By construction $\te \in \Delta^S_1(\de)$ where $\de= \te' + \e$ and $\te'$ is the absolute element in the level $A_{i-1}$ minimal with respect to the first coordinate. But $\te \wedge \al \in E$, and thus there exists an element $\he$, absolute in $E$, such that $\al \in \Delta^S_1(\he)$. It follows that $\he - \e$ is an absolute element and it is in some level $A_p $ with $p < i$. If $p< i-1$, this contradicts the inductive hypothesis as before, while if 
$p=i-1$, it contradicts the minimality of $\te'$. 
\end{proof}

\begin{oss}
\label{seguedaproof} \rm
Assume the same setting and notation of Theorem \ref{teoremaaperynonlocale}.
Let $\al \in A_i$ such that $\al \ll \g + \e$ (obviously $i < e$). By Proposition \ref{structurelevel}, $\al $ shares a coordinate with an absolute element in $A_i$. By what observed in the proof of Proposition \ref{absoluteelements}, the cases (iv)-(v) of Proposition \ref{structurelevel} can be excluded since for $i \leq e_2$, $j \leq e_1$, $\Delta^S_2(0, v_i)$ and $\Delta^S_1(u_j,0)$ contains only one element. 
Hence, using the notation of Proposition \ref{structurelevel}, $\al \in \Delta^S(\de)$ where $\de$ is either the minimum of two "consecutive" absolute elements $\te^{(k)}, \te^{(k+1)}\in A_i$ or the minimum of an absolute element in $A_i$ and of the elements of an infinite line in $A_i$. Moreover, $\de$ is an absolute element of $E$.
\end{oss}

We are ready to prove Theorem \ref{teoremaaperynonlocale}.


\begin{proof}(of Theorem \ref{teoremaaperynonlocale})
For this proof we will use the structure of Ap\'ery set of a non-local good semigroup described in Theorem \ref{thmnonlocallevels} and Corollary \ref{aperylocale}.
Proposition \ref{absoluteelements} describes the absolute elements of $S$ in the level $A_i$ as exactly the opportune translation of the elements $\al$ in $\Ap(S')$ such that $\al \in A_i$ and $\Delta^S(\al) \subseteq \Ap(S)$. Equivalently such $\al$'s are the elements $(a_1, a_2)$ such that $a_1 \in \Ap(S_1',e_1)$ and $a_2 \in \Ap(S_2',e_2)$. 

Notice that by Lemma \ref{lemminofacile}, $\g \in \Ap(S)$. If $\al \neq \g$ is an absolute element, then $\g \gg \al$. Hence $\g $ is in a strictly larger level than all the other absolute elements of $S$. Clearly $A_e = \Delta(\g+\e)$ does not contain any absolute element. By Proposition \ref{absoluteelements}, the level $A_{e-1}$ contains only one absolute element, namely $\om_{e_1, e_2}+ (e-2)$. Thus $\g= \om_{e_1, e_2}+ (e-2)$.

Now we are able to show that the infinite lines in $\Ap(S)$ come from the infinite lines in $\Ap(S')$ according to the formula $A_i = A_i'+ (i-1)\e$. We do this for the vertical lines, but the argument for the horizontal is analogous switching the components. Using \cite[Lemma 1 items 7-8 and Theorem 5]{DGM} it is sufficient to consider the elements $\al^i=(s_i, \gamma_2+e_2+1) \in A_i$ for $i=e_2+1, \ldots, e$. Set $j:=i-e_2$. Since the coordinates of the infinite vertical lines in $\Ap(S')$ correspond to the elements of $\Ap(S_1')$, we have to prove that
$$ s_i= (u_j- (j-1) e_1) + (i-1)e_1 = u_j + e_1e_2. $$ By \cite[Theorem 8]{DGM}, the (first) coordinates of infinite vertical lines in $\Ap(S)$ are dual with respect to $\gamma_1 + e_1$ to the coordinates of $\Ap(S_1)$. Thus we know that $s_i = \gamma_1 + e_1 - u_{e_1-j+1}$. But since $\g = \om_{e_1, e_2}+ (e-2)\e $, we get $$ \gamma_1 = (u_{e_1}-(e_1-1)e_1)+ (e_1+e_2-2)e_1 = u_{e_1}-e_1 + e_1e_2. $$ To conclude we just need to recall that, since $S_1$ is symmetric, $u_{e_1} = u_j + u_{e_1-j+1}$.

Finally, it remains to prove that $A_i = A_i'+ (i-1)\e$ only for elements $\ll \g + \e$ that are not absolute (the behavior of the absolute elements follows by Proposition \ref{absoluteelements}). Since the level $A_e$ does not contain any element of this kind, we can assume inductively that for all the levels $A_p$ with $p > i$ the thesis is true.

Let $\al=(a_1,a_2) \in A_i$. By Proposition \ref{structurelevel}, $\al $ shares a coordinate with an absolute element in $A_i$. 
By Remark \ref{seguedaproof}, suppose that $\al \in \Delta^S(\de)$ where $\de$ is an absolute element of $E$. 

Say that $\al \in \Delta^S_2(\de)$. Thus, again by Remark \ref{seguedaproof}, $\Delta^S_2(\al)$ contains either an absolute element or an infinite line in $A_i$. In both cases it follows by the previous part of the proof that
$a_2 - (i-1)e_2 \in S_2'$.
We want to show that also $a_1 - (i-1)e_1 \in S_1'$.
If $\Delta^S_1(\al)$ contains some element of $A$, then it must contain some element in $A_h$ with $h > i$.
By inductive assumption, the thesis is true for the level $A_h$ and we get $a_1 - (i-1)e_1 = a_1 - (h-1)e_1 + (h-i)e_1 \in S_1'$. 

Otherwise, $\Delta^S_1(\al)$ contains only elements of $E$ which eventually form an infinite line by Lemma \ref{lemminofacile}. Hence, $a_1 = s_p+me_1$, where $m \geq 1$ and $s_p=u_{p-e_2} + e_1e_2$ is the coordinate in $S_1$ of the infinite vertical line of $S$ contained in the level $A_p$ for some $p$. 

Suppose $p \leq i$. This implies that the level $A_i$ contains an infinite vertical line of coordinate $s_i=u_{i-e_2} + e_1e_2$.
By Remark \ref{planebranch}, since $S_1$ is a plane branch, for every $h > 1$, $u_h - u_{h-1} > e_1$. Clearly $s_i < a_1$, thus $$ 0 <  a_1-s_i = - (u_{i-e_2} - u_{p-e_2}) +me_1 < (-i+p+m)e_1.  $$
Therefore $p> i-m$. Again this shows $a_1 - (i-1)e_1 = a_1 - (p+m-1)e_1 + (p+m-i)e_1 \in S_1'$.

Suppose now $p > i$. In this case we get a contradiction finding an element in $A_p \cap \Delta^S_1(\al)$. By Lemma \ref{2ramiwb}, there exists $\be \in A_{p}$ such that $\be \gg \al$. Since $s_p < a_1$, using the notation of Proposition \ref{structurelevel}, there exist two elements $\te^{(t)}, \te^{(t+1)}\in A_p$ with $t \geq 0$, such that $\theta_1^{(t)} < \alpha_1 < \theta_1^{(t+1)}$. The element $\om:= (a_1, \theta_2^{(t+1)}) \in \Delta^S_1(\al) \cap \Delta^S(\te^{(t)} \wedge \te^{(t+1)})$. By Remark \ref{seguedaproof}, $\om \in A_p$. 

In all the above cases $\al - (i-1) \e \in S_1' \times S_2' = S'$.
Combining now Theorem \ref{aperylocale}, Proposition \ref{absoluteelements} and the definition of $\om_{j,k}$ it follows that $\al - (i-1) \e \in A_i'$. 

To prove the opposite inclusion, let $\be \in A_i'$ such that $\be \ll \g(S')+\e$. If $\be = \om_{j,k}$ with $j+k-1=i$, clearly we are done by Proposition \ref{absoluteelements}. Hence assume that $\be=(b_1,b_2) < \om_{j,k}$ and $\be \in \Delta^S_2(\om_{j-1,k})$ with $j+k-1=i$ (or similarly in $\Delta^S_1(\om_{j,k-1})$). By Proposition \ref{absoluteelements}, if $\be + (i-1) \e \in S$, then it must belong to the level $A_i$. It suffices then to show $\be + (i-1) \e \in S$. 

Suppose not.
Since $b_1 \in S'_1$ and $u_{j-1} - (j-2)e_1 < b_1 < u_j-(j-1)e_1$, then $b_1= (u_{h}-(h-1)e_1) + me_1$ with $h \leq j-1$, $m\geq 1$. Since $i \geq j > h$, 
$b_1 + (i-1)e_1 = b_1 + (h-1)e_1 + (i-h)e_1 \in S_1 $. Thus there exists some element in $S$ having the first coordinate equal to $b_1 + (i-1)e_1$. 
Since $\om_{j,k} + (i-1)\e \in \Delta^S_2(\be + (i-1) \e) $, there must exist
 an absolute element $\te=(t_1, t_2) \in S$ with $t_1= b_1+(i-1)e_1 $ and $t_2 < b_2+(i-1)e_2$ (otherwise, property (G1) would contradict the assumption $\be + (i-1) \e \not \in S$). 
 Since $\te \ll \om_{j,k}+(i-1) \e$, then $\te \in A_p$ with $p<i$. 
Thus $t_1 -(p-1)e_1 \in \Ap(S'_1, e_1)$. 
But $t_1 -(p-1)e_1 = b_1+(i-1)e_1-(p-1)e_1 = b_1 + (i-p)e_1 \not \in \Ap(S'_1, e_1) $ since $p < i$. This is a contradiction and shows $\be + (i-1) \e \in S$. 
\end{proof}

\section*{Acknowledgements}
 The first author is supported by the NAWA Foundation grant Powroty "Applications of Lie algebras to Commutative Algebra" - PPN/PPO/2018/1/00013/U/00001.
The other two authors are funded by the project PIA.CE.RI 2020-2022 Università di Catania - Linea 2 - "Proprietà locali e globali di anelli e di varietà algebriche". The authors wish to thank Marco D'Anna for the interesting and helpful discussions about the content of this article. 



\begin{thebibliography}{30}

\bibitem{aperyold}
R. Ap\'{e}ry. \it Sur les branches superlinéaires des courbes algébriques. \rm C.R. Acad.
Sci. Paris, (222):1198-1200, 1946.

\bibitem{libroassidannapedro}
A. Assi, M. D’Anna, P.A. García-Sánchez, \it Numerical semigroups and applications. \rm RSME Springer Ser. (2020). Springer, Cham.  


\bibitem{a-u} V. Barucci, M. D'Anna, R. Fr\"oberg, \emph{Analytically unramified one-dimensional semilocal rings and their value semigroups}, J. Pure Appl. Alg. \textbf{147} (2000), 215-254.

\bibitem{planealgebroid} V. Barucci, M. D’Anna, R. Fr\"oberg, \it On plane algebroid curves, \rm 
in Commutative Ring theory and applications, 37–50, Lecture Notes in pure and applied mathematics, vol. 231, Marcel Dekker 2002.

\bibitem{apery} V. Barucci, M. D'Anna, R. Fr\"oberg, \emph{The Ap\'{e}ry algorithm for a plane singularity with two branches}, Beitrage zur Algebra und Geometrie, 46 (1), 2005;

\bibitem{two} V. Barucci, M. D'Anna, R. Fr\"oberg, \emph{The semigroup of values of a one-dimensional local ring with two minimal primes}, Comm. Algebra \textbf{28}(8) (2000), 3 607-3633.



\bibitem{B-Fr} V.~Barucci, R.~Fr\"oberg, {\em One-dimensional almost
Gorenstein rings}, J.~Algebra {\bf 188} (1997), 418--442.



\bibitem{c-d-gz} A. Campillo,  F. Delgado, S. M. Gusein-Zade,  \emph{On generators of the semigroup of a plane curve singularity}, J. London Math. Soc. (2) \textbf{60} (1999), 420-430.

\bibitem{C-D-K} A. Campillo, F. Delgado, K. Kiyek, \emph{Gorenstein properties and symmetry for one-dimensional local Cohen-Macaulay rings}, Manuscripta Math.  \textbf{83} (1994), 405-423.



\bibitem{danna1} M. D'Anna, \emph{The canonical module of a one-dimensional reduced local ring}, Comm. Algebra \textbf{25} (1997), 2939-2965.

\bibitem{D-G-M-T} M. D'Anna, P.A. Garc\'ia Sanchez, V. Micale, L. Tozzo, \emph{Good semigroups of $N^n$}. International Journal of Algebra and Computation, vol. 28 (2018), p. 179-206.

\bibitem{DGM} M. D'Anna, L.~Guerrieri, V. Micale, \emph{The Ap\'{e}ry Set of a Good Semigroup}. Advances in Rings, Modules and Factorizations, Springer Proc. Math. Stat., 321 (2020), p. 79-104.

\bibitem{DGM2} M. D'Anna, L.~Guerrieri, V. Micale, \emph{The type of a good semigroup and the almost
symmetric condition}. (2020) Mediterr. J. Math. 17, no.1, Paper No. 28, 23pp.

\bibitem{DMS} M. D’Anna, V. Micale, A. Sammartano, \it Classes of complete intersection numerical semigroups.\rm Semigroup Forum 2014, 88, 453–467


\bibitem{felix} F. Delgado, \emph{The semigroup of values of a curve singularity with several branches}, Manuscripta Math. \textbf{59} (1987), 347-374.

\bibitem{delgado} F. Delgado, \emph{Gorenstein curves and symmetry of the semigroup of value}, Manuscripta Math.  \textbf{61} (1988), 285-296.
Perspectives, Springer.

\bibitem{garcia} A. Garc\'ia, \emph{Semigroups associated to singular points of plane curves}, J. Reine Angew. Math.  \textbf{336} (1982), 165-184.




\bibitem{GMM} L. Guerrieri, N. Maugeri, V. Micale, {\em Partition of the complement of good semigroup ideals and Apéry sets}, Communications in Algebra, 49, No. 10, 4136-4158 (2021).

\bibitem{H-K} J.~Herzog, E.~Kunz, {\em Die Wertehalbgruppe
eines lokalen Rings der Dimension 1}, Sitz.~Ber.~Heidelberger
Akad.~Wiss. (1971), 27--43.

 
 \bibitem{KST} P. Korell, M. Schulze, L. Tozzo,
\emph{Duality on value semigroups}, 
J. Commut. Algebra
Volume 11, Number 1 (2019), 81-129

\bibitem{kunz}
    E. Kunz, 
    \emph{The value-semigroup of a one-dimensional Gorenstein ring}, 
    Proc. Amer. Math. Soc. 25 (1970), 748-751. 
    
\bibitem{NG} N. Maugeri, G. Zito,
\emph{Embedding dimension of a good semigroup}, Numerical Semigroups pages 197–230. Springer, (2020).
 
 \bibitem{NG2} N. Maugeri, G. Zito,
 \emph{The Tree of Good Semigroups in $\mathbb{N}^2$ and a Generalization of the Wilf Conjecture}, Mediterranean Journal of Mathematics,
  17, 5,
  1-27,
  2020.
    
\bibitem{nari} H. Nari, \em Symmetries on almost symmetric numerical semigroups\rm, Semigroup Forum, 86 (2013), 140 - 154. 

\bibitem{libropedro} J. C. Rosales, P. A. Garc\'ia-Sanchez, \it Numerical semigroups. \rm Developments in
Mathematics, vol.20, Springer, New York, (2009).

\bibitem{waldi} R. Waldi. \it On the equivalence of plane curve singularities, \rm Communications in Algebra, 28(9):4389-4401, 2000.


\bibitem{zariski} O. Zariski, \it Le probl\`{e}me des modules pour les branches planes, \rm Hermann, Paris, 1986


\end{thebibliography}
\end{document}